% Version of 14.10.2021 
% Log: Thm. 4.16: Reduktion auf den effektiven Fall wird nicht benoetigt.
\documentclass[10pt]{article} 
%%%%%%%%%%%%%%%%%%%%%%%%%%%%%%%%%%%%%%%%%%%%%%%%
%%
%% This is affinepair.tex 

\usepackage{amsmath, amsfonts, amssymb}
\usepackage{color}

\input{liemacs10.sty} 
\addtolength\textwidth{3cm}
\addtolength\textheight{1cm}
\addtolength\oddsidemargin{-2cm}
\addtolength\evensidemargin{-2cm}

\newcommand{\coz}{\mathop{{\rm co}_\fz}\nolimits}
\newcommand{\co}{\mathop{{\rm co}}\nolimits}

\newcommand{\be}{\mathbf{e}}
\newcommand{\bw}{\mathbf{w}}
\newcommand{\bv}{\mathbf{v}}

\renewcommand{\phi}{\varphi}

\usepackage{hyperref}
\usepackage{pdfpages}
\usepackage{pgfplots}

\renewcommand\mlabel{\label}

\begin{document}
%%%%%%%%%%%%%%%%%%%%%%%%%%

\title{Elements in pointed invariant cones in Lie algebras\\ 
and corresponding affine pairs} 
%\title{Positive affine pairs in Lie algebras with invariant cones} 
\author{Karl-Hermann Neeb and Daniel Oeh
\thanks{This research has been supported
  by DFG-grant NE 413/10-1}
\thanks{Department of Mathematics,
 Friedrich-Alexander-University Erlangen--Nuremberg,
Cauerstrasse 11,\break 91058 Erlangen, Germany, 
 neeb@math.fau.de; oehd@math.fau.de}}
\maketitle

%\begin{center}{\tt affinepair.tex}  \end{center}

\begin{abstract}
In this note we study in a finite dimensional Lie algebra 
$\g$ the set of all those elements $x$ for which the 
closed convex hull  of the adjoint orbit contains 
no affine lines; this contains in particular elements whose adjoint 
orbits generates a pointed convex cone~$C_x$. 
Assuming that $\g$ is admissible, i.e., contains a generating invariant 
convex subset not containing affine lines, we obtain a natural characterization  
of such elements, also for non-reductive Lie algebras.

Motivated by the concept of standard (Borchers) pairs in 
QFT, we also study pairs $(x,h)$ of Lie algebra elements 
satisfying $[h,x]=x$ for which $C_x$ pointed. 
Given $x$, we show that such elements $h$ 
can be constructed in such a way that $\ad h$ defines a $5$-grading, 
and characterize the cases where we even get a $3$-grading.   

\end{abstract}

\section{Introduction} 
\mlabel{sec:1}

Convexity properties of adjoint orbits 
$\cO_x = \Inn(\g)x$ in a finite dimensional real Lie algebra, 
where $\Inn(\g) = \la e^{\ad \g} \ra$ is  the group of inner automorphisms, 
play a role in many contexts. Most directly, they appear in the theory 
of invariant convex cones. For instance, if $U \: G \to \U(\cH)$ 
is a unitary representation and 
$\partial U(x)$ denotes the infinitesimal generator of the 
unitary one-parameter group $(U(\exp t x))_{t \in \R}$, then the 
{\it positive cone of $U$} 
\begin{equation}
  \label{eq:CU}
 C_U := \{ x \in \g \: -i \partial U(x) \geq 0\}  
\end{equation}
is a closed convex invariant cone in $\g$ which is pointed 
(contains no affine lines) if and only if $\ker(U)$ is discrete. 
In the literature pointed generating invariant cones have been studied from the 
perspective of their interior: If $W \subeq \g$ is pointed and generating,
then its interior consists of elliptic elements 
($\ad x$ is semisimple with imaginary spectrum) and $W$ is determined 
by its intersection with a compactly 
embedded Cartan subalgebra (\cite{HHL89, Ne00}). Unfortunately, 
this theory provides not much information on the non-elliptic elements 
in the boundary of~$W$. A notable exception 
is \cite{HNO94} which, for a simple hermitian Lie algebra, 
 provides a classification of all nilpotent adjoint orbits in an invariant cone. 

In the present paper we address adjoint orbits in invariant cones. 
For more precise formulations, we introduce some notation. 
For $x \in \g$, we write 
\begin{itemize}
\item $\co(x) := \oline{\conv(\cO_x)},$ for the  closed convex hull of $\cO_x$, and 
\item  $C_x :=  \cone(\cO_x) = \oline{\R_+\conv(\cO_x)}$ % 
for the closed convex cone generated by~$x$. 
\end{itemize}
We call a closed convex subset $C$ of a real linear space $V$
 {\it pointed} if it contains no 
non-trivial affine lines and {\it generating} if $\Spann C = V$. 
We study the subsets 
\begin{equation}
  \label{eq:gc1}
  \g_{\rm co} := \{ x \in \g \: \co(x)\ \mbox{pointed} \} 
\supeq   \g_{\rm c} := \{ x \in \g \: C_x\ \mbox{pointed} \}
\end{equation}
and characterize the elements in this set in terms of explicitly 
available data. 

If $\g$ is a simple Lie algebra, then we have to distinguish three 
cases. If $\g$ is compact, then all sets $\co(x)$ are compact, hence 
pointed, so that $\g_{\rm co} = \g$. As every convex cone 
invariant under a compact group contains a fixed point in its interior, 
we have $\g_{\rm c} = \{0\}$ for compact Lie algebras. 
If $\g$ is non-compact, then $\g_{\rm c} \not= \{0\}$ is equivalent to 
$\g$ being hermitian, i.e., maximal compactly embedded subalgebras 
have non-trivial center. In this 
case $\g_{\rm c} = \g_{\rm co}$ is a double cone 
(cf.\ Lemma~\ref{lem:cN}, Kostant--Vinberg Theorem). If $\g$ is neither compact nor hermitian,
then $\g_{\rm c} = \g_{\rm co}= \{0\}$. 

For a direct sum $\g = \g_1 \oplus \g_2$ we have 
$\g_{\rm co} = \g_{1,{\rm co}} \times \g_{2,{\rm co}}$. 
This reduces for a reductive Lie algebra  
the determination of this set to the case of simple hermitian 
Lie algebras. However, the determination of the subset $\g_{\rm c}$ 
of $\g_{\rm co}$ is less obvious for reductive Lie algebras 
(Proposition~\ref{prop:3.14}). 

This discussion shows that, among the reductive Lie algebras 
only the {\it quasihermitian} ones 
(all simple ideals are either compact or hermitian), 
play a role in our context. Beyond reductive Lie algebras, 
the natural context for our investigation 
is the class of {\it admissible} Lie algebras, 
i.e., those containing an $\Inn(\g)$-invariant pointed generating 
closed convex subset $C$ (cf.~\cite[Def.~VII.3.2]{Ne00}).
Since every pointed invariant convex subset spans an admissible ideal, 
we shall assume throughout that $\g$ is admissible. 

It is of vital importance for our arguments, that 
admissible Lie algebras permit a powerful structure theory. 
Their coarse structure is given by 
$\g = \g(\fl,V,\fz,\beta) = \fz \oplus V \oplus \fl$, 
where $\fl$ is a reductive Lie algebra, 
$V$ an $\fl$-module, $\fz$ a vector space, and 
$\beta \: V \times V \to \fz$ an $\fl$-invariant skew-symmetric bilinear map; 
the Lie bracket on $\g$ is given by  
\begin{equation}
  \label{eq:VII.2.1a}
 [(z,v,x), (z',v',x')] 
= (\beta(v,v'), x.v' - x'.v, [x,x']) 
\in \fz \oplus V \oplus \fl. 
\end{equation}
For their fine structure, we use the existence of a 
compactly embedded Cartan subalgebra $\ft = \fz \oplus \ft_\fl$ 
and the corresponding root decomposition. 

The key observation underlying our analysis of 
the sets $\g_{\rm c}$ and $\g_{\rm co}$ in Section~\ref{sec:3}, 
is the Reduction Theorem~\ref{thm:conj1}, asserting 
that every adjoint orbit in $\g_{\rm co}$ intersects 
the reductive subalgebra ${\fz + \fl}$. 
We therefore take in Subsection~\ref{subsec:3.2} a closer 
look at the reductive case, where we provide in 
Propositions~\ref{prop:3.10} and \ref{prop:3.14} 
a complete description of the sets $\g_{\rm c}$ and $\g_{\rm co}$. 
The central result in Section~\ref{sec:3} is the 
Characterization Theorem~\ref{thm:2.12} that characterizes 
elements $x$ in $\g_{\rm c}$ and $\g_{\rm co}$ in terms of the 
closed convex hull $\co_\fz(x)$ of the 
$\fz$-valued Hamiltonian function 
\[ H_x^\fz(v) = 
p_\fz(e^{\ad v}x) = x_\fz + [v,x_V] +  \frac{1}{2} [v, [v,x_\fl]]
\quad \mbox{ for } \quad x  = x_\fz + x_V + x_\fl, v \in V. \] 
In particular, we show that $\co(x)$ is pointed if and only if 
$\co_\fz(x)$ is pointed and that $C_x$ is pointed if, in addition, 
$\co_\fz(x)$ generates a pointed cone whenever the $\fl$-component $x_\fl$ 
of $x$ is nilpotent. We also discuss to which extent the pointedness 
of $C_x$ implies the existence of a pointed generating invariant 
cone $W \subeq \g$ containing $x$; which is not always the case 
(Example~\ref{ex:counterex}). 

In Section~\ref{sec:4} we study {\it affine pairs} $(x,h)$ for a 
pointed invariant cone~$W \subeq \g$. These pairs are characterized 
by the relations 
\begin{equation}
  \label{eq:affpa}
 x \in W \quad \mbox{ and } \quad [h,x] = x.
\end{equation}
The interest in these pairs stems from their relevance 
in Algebraic Quantum Field Theory (AQFT), where they arise 
from unitary representations $(U,\cH)$ of a corresponding Lie group $G$ 
and their positive cones $W = C_U$ (see \eqref{eq:CU}). 
If $U$ extends to an antiunitary representation 
of $G \rtimes \{\id_G, \tau_G\}$, where 
$\tau_G \in \Aut(G)$ is an involution and the corresponding involution $\tau 
\in \Aut(\g)$ satisfies $\tau(h) = h$ and $\tau(x) = -x$, then 
we can associate to $h$ a so-called {\it standard subspace} 
\[ V := \Fix\big(U(\tau_G) e^{\pi i \cdot \partial U(h)}\big) \subeq \cH, \] 
and these subspace encode localization data in QFT 
\cite{BGL02}. In this context $(U_x,V)$ with 
$U_x(t) := U(\exp tx)$ is called a {\it Borchers pair} 
or a {\it standard pair} (\cite{Le15}), and one would like to understand all 
those pairs arising from a given unitary representation  
(see \cite{NO17} for more details). 
This leads naturally 
to the problem to describe and classify affine pairs. 
For an affine pair $(x,h)$ the element $x$ is nilpotent, hence 
in particular not elliptic if it is not central. 
As we know from \cite{Ne21}, the most important affine pairs 
are those for which $h$ in an {\it Euler element of $\g$}, 
i.e., $\ad h$ is diagonalizable with possible eigenvalues 
$\{-1,0,1\}$ and the Lie algebra $\g$ is generated by 
$h$ and the cones $C_U \cap \g_{\pm 1}(h)$
(see \cite{Oeh20a, Oeh20b} for related classification results). 
Considering representations with discrete kernel then leads to the 
situation, where the cone $C_U \subeq \g$ is pointed, so that 
$\g = \g_U \rtimes \R h$, and $\g_U = C_U-C_U$ is an ideal containing the 
pointed generating invariant cone $C_U$. 

This motivates our investigations in Section~\ref{sec:4}, 
where we start with a nilpotent element  $x \in W$, 
$W$ a pointed generating invariant cone, and then consider derivations 
$D$ on $\g$ satisfying $Dx = x$ and $e^{\R D}W = W$.  
Our first main result on affine pairs 
is the Existence Theorem~\ref{thm:4.2},  
asserting that, for any nilpotent element $x \in \g_{\rm co}$ there exists 
%, under mild conditions on $W$, 
such a derivation $D$ with $\Spec(D) \subeq 
 \big\{ 0, \pm \shalf, \pm 1\big\}$. 
The second main result 
characterizes the existence of Euler derivations with 
this property, i.e., where we even have $\Spec(D) \subeq \{0,\pm 1\}$ 
(Theorem~\ref{thm:eulderexist}). \\

\nin {\bf Notation:} 
\begin{itemize}
\item For a Lie algebra $\g$, we write $\Inn(\g) = \la e^{\ad \g} \ra$ 
for the group of inner automorphisms. For a Lie subalgebra 
$\fh \subeq \g$, we write $\Inn_\g(\fh) \subeq \Inn(\g)$ for the 
subgroup generated by $e^{\ad \fh}$. ``Invariance'' of subsets of $\g$ always 
refers to the group $\Inn(\g)$. 
\item A subalgebra $\fk \subeq \g$ is said to be {\it compactly embedded} 
if the closure of $\Inn_\g(\fk)$ in $\Aut(\g)$ is compact. 
A {\it compactly embedded Cartan subalgebra} is a compactly embedded 
subalgebra which is also maximal abelian. 
\item For $A \in \End(V)$, we write $V_\lambda(A) := \ker(A - \lambda\id_V)$ 
for the eigenspaces. 
\item A closed convex cone in a finite dimensional real vector space is 
simply called a {\it cone}. We write $\cone(S) = \oline{\R_+ \conv(S)}$ for the 
closed convex cone generated by a subset $S$ 
and 
\[ S^\star := \{ \alpha \in V^* \: (\forall v \in S)\ \alpha(v) \geq 0\}\] 
for the {\it dual cone}. 
\end{itemize}

\tableofcontents

\section{Structure of admissible Lie algebras} 
\mlabel{sec:2}

In this  section we collect some relevant results on 
the structure of admissible Lie algebras. 

\begin{defn} Let $C$ be a closed convex subset of the finite dimensional 
real vector space~$V$. 
Then the {\it recession cone of $C$} is 
\[ \lim(C) := \{ x\in V \: C + x \subeq C\} 
= \Big\{ x \in V \: x = \lim_{n \to \infty} t_n c_n,
c_n \in C, t_n \to 0, t_n \geq 0 \Big\} \] 
(cf.\ \cite[Prop.~V.1.6]{Ne00}). 
The {\it edge of this cone} is 
\[ H(C) := \lim(C) \cap -\lim(C) = \{ x \in V \: C + x = C\}.\] 
We say that $C$ is {\it pointed} if $H(C) =\{0\}$, which is 
equivalent to $C$ not containing affine lines 
(\cite[Prop.~V.1.6]{Ne00}). 
\end{defn}

\begin{defn} A finite dimensional real Lie algebra $\g$ 
is called {\it admissible} if 
it contains an $\Inn(\g)$-invariant pointed generating 
closed convex subset $C$ (cf.~\cite[Def.~VII.3.2]{Ne00}).
\end{defn}

\begin{rem}
If $C \subeq \g$ is an invariant pointed closed convex subset, 
then $\g_C := \Spann C \trile \g$ 
is an ideal in which $C$ is also generating, so that $\g_C$ is an admissible 
Lie algebra. 
\end{rem}

\begin{defn} \mlabel{def:spind} (Spindler construction;  \cite{Sp88})
Let $\fl$ be a Lie algebra, 
$V$ an $\fl$-module, $\fz$ a vector space, and 
$\beta \: V \times V \to \fz$ an $\fl$-invariant skew-symmetric bilinear map.
Then $\fz \times V \times \fl$ is a Lie algebra with
respect to the bracket 
\begin{equation}
  \label{eq:VII.2.1}
 [(z,v,x), (z',v',x')] 
= (\beta(v,v'), x.v' - x'.v, [x,x']). 
\end{equation}
We write $\g(\fl, V, \fz, \beta)$ 
for the so-obtained Lie algebra. 
\end{defn}

The following theorem describes the structure of 
non-reductive admissible Lie algebras. 

\begin{thm} \mlabel{thm:spind} 
Any admissible Lie algebra $\g$ is of the form 
$\g(\fl,V,\fz,\beta)$, where 
\begin{itemize}
\item[\rm(a)] $\fz = \fz(\g)$, $\fu = \fz + V$ is the maximal nilpotent ideal. 
\item[\rm(b)] $\fl$ is reductive and quasihermitian, i.e., 
all simple ideals are compact or hermitian. 
\item[\rm(c)] $\fl$ contains a compactly embedded Cartan subalgebra $\ft_\fl$, 
and  $\ft := \fz + \ft_\fl$ is a compactly embedded Cartan subalgebra of $\g$.
\item[\rm(d)] $V = [\ft,\fu] = [\ft_\fl,V]$.
\item[\rm(e)] There exists an element $f \in \fz^*$ such that $(V, f \circ \beta)$ 
is a symplectic $\fl$-module of convex type, i.e., there exists an element 
$x \in \fl$, such that the Hamiltonian function 
\[ H_x^f : V \to \R,\quad H_x^f(v) := f(\beta(x.v,v)) \] 
is positive definite. 
\end{itemize}
Conversely, $\g(\fl,V,\fz,\beta)$ is admissible if {\rm(a)-(e)} are satisfied.
\end{thm}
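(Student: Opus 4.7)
The plan for the forward direction is to fix a witness $C\subeq\g$ of admissibility (an $\Inn(\g)$-invariant pointed generating closed convex subset) and extract the Spindler decomposition in stages. First I would produce a compactly embedded Cartan subalgebra: for an interior point $x_0\in C^\circ$, the orbit $\Inn(\g)x_0$ lies in $C$, and pointedness of $\lim(C)$ forces the Jordan semisimple part of $x_0$ to have bounded adjoint orbit, hence to be elliptic. Its centralizer then contains a compactly embedded Cartan subalgebra $\ft\subeq\g$. The technical heart is to show that the nilradical $\fu$ of $\g$ is $2$-step nilpotent: if not, a bracket of length three in $\fu$, rescaled using the $\ad\ft$-weight structure, would produce an affine line inside the invariant set $C$, contradicting pointedness. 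Setting $\fz:=\fz(\g)$, picking a $\ft$-invariant complement $V$ to $\fz$ in $\fu$, and choosing a Levi complement $\fl$ with $\ft_\fl:=\ft\cap\fl$ Cartan in $\fl$, we obtain the identification $\g\cong\g(\fl,V,\fz,\beta)$ with $\beta(v,v'):=[v,v']\in\fz$.

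Next I would verify the conditions. Condition (a) holds by construction; (c) is immediate from the construction, with compact embedding of $\ft_\fl$ in $\fl$ inherited from that of $\ft$ in $\g$; and (d) follows from the $\ad\ft_\fl$-weight decomposition of $V$ once zero-weight vectors are absorbed into $\fz$. For (b), the projection of $C$ to $\g/\fu\cong\fl$ is an invariant pointed generating convex subset, and reductive Lie algebras admitting such a subset are exactly the quasihermitian ones (classical result from \cite[Ch.~VII]{Ne00}). For (e), a functional $f\in\fz^*$ in the relative interior of the dual cone of $\lim(C)\cap\fz$ makes $f\circ\beta$ an $\fl$-invariant symplectic form on $V$; the Hamiltonian $H_x^f(v)=f(\beta(x.v,v))$ arises as the quadratic Taylor coefficient of the map $v\mapsto f(p_\fz(e^{\ad v}x))$, and its positive-definiteness for a suitable $x\in\fl$ follows from pointedness of $C$ in $V$-directions combined with its generation in $\fz$.

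For the converse, assume (a)--(e). I would build a pointed generating invariant cone $W\subeq\g$ via a Spindler-type formula. Given $W_\fl\subeq\fl$ a pointed generating invariant cone (furnished by quasihermiticity and condition (e), via \cite[Ch.~VII]{Ne00}) and a family $\cF\subeq\fz^*$ of functionals witnessing (e) and spanning $\fz^*$, set
\[
W := \bigl\{ (z,v,y)\in\g \: y\in W_\fl,\ f(z) \geq \tfrac{1}{2} H_y^f(v)\text{ for all } f\in\cF\bigr\}.
\]
Invariance under $\Inn(\g)$ reduces, via the bracket formula \eqref{eq:VII.2.1a}, to $\fl$-invariance of $\beta$ and the $e^{\ad v}$-transformation of the $\fz$-component. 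Pointedness follows from the positive-definiteness provided by (e) together with pointedness of $W_\fl$; generation follows from $\cF$ spanning $\fz^*$ and $W_\fl$ being generating in $\fl$.

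The main obstacle is the $2$-step nilpotency argument in the forward direction, which requires careful bookkeeping of how root-string brackets interact with the invariant convex set $C$; this step is what makes the theory specific to admissibility. Once it is secured, the remaining verifications and the Spindler-type construction in the converse reduce to systematic applications of the Hamiltonian--convexity machinery.
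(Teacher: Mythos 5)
The paper does not reprove this theorem at all: its ``proof'' is a bare citation of the structure theory of admissible Lie algebras developed in Neeb's book \cite[p.~293, Thms.~VII.2.26, VIII.2.7]{Ne00}. Your proposal instead attempts to rederive that structure theory from scratch in a few paragraphs, which is a much more ambitious undertaking, and several of the key steps do not hold as stated.

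\textbf{Forward direction.} Your claim that ``pointedness of $\lim(C)$ forces the Jordan semisimple part of $x_0$ to have bounded adjoint orbit'' is false. Take $\g=\fsl_2(\R)$ and $C=z+W_{\rm min}$ for an interior point $z$ of the minimal invariant cone; then $C$ is pointed, $z$ is elliptic, but $\cO_z$ is a full hyperboloid sheet and is unbounded. Ellipticity of interior points of pointed invariant convex sets is true, but the argument goes through the closedness of semisimple orbits and a separating functional, not through boundedness. Likewise, your $2$-step nilpotency sketch (``a bracket of length three, rescaled along a $\ft$-weight direction, produces an affine line'') only produces a line $\pm d\in\lim(\co(x))$ when the top-degree term $d=\frac{1}{N!}(\ad v)^N x$ of $e^{\ad tv}x$ has \emph{odd} degree $N\geq 3$; if the nilradical has even step length you would need a different vector or a different element $x$, and that bookkeeping is exactly where the real content of \cite[Thm.~VII.2.26]{Ne00} lies. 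Finally, the assertion that $p_\fl(C)$ is pointed is not automatic; it is itself a consequence of the structure theory you are trying to establish, so using it to prove (b) is circular.

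\textbf{Converse direction.} Your proposed set
\[
W := \bigl\{(z,v,y)\in\g \:\: y\in W_\fl,\ f(z)\geq\tfrac12 H_y^f(v)\ \text{for all}\ f\in\cF\bigr\}
\]
is neither a cone nor a convex set. Scaling $(z,v,y)\mapsto(\lambda z,\lambda v,\lambda y)$ multiplies the left side by $\lambda$ and the right side by $\lambda^3$ (since $H_y^f(v)=f(\beta(y.v,v))$ is linear in $y$ and quadratic in $v$), so homogeneity fails. Convexity fails too: in the oscillator algebra $\g(\R,\R^2,\R,\omega)$ with $H_y^f(v)=t\,\|v\|^2$ for $y\leftrightarrow t\geq 0$, the points $(1,4,0)$ and $(1,0,4)$ both satisfy $z\geq\tfrac t2\|v\|^2$, but their midpoint $(1,2,2)$ gives $1\geq\tfrac{2}{2}\cdot 4=4$, which is false. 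The correct construction (cf.\ Remark~\ref{rem:conewcz} in this paper and \cite[Thm.~VIII.2.7]{Ne00}) is to take $W=\{x\in\g\: \cO_x\subeq C_\fz+V+W_\fl\}$, or equivalently to define $W$ via the \emph{infimum} over $w\in V$ of $f\bigl(p_\fz(e^{\ad w}x)\bigr)=f(z)+\omega(w,v)+\tfrac12 H_{y}^f(w)$, which for positive definite $H_y^f$ yields the convex constraint $f(z)+\tfrac12\omega(y^{-1}v,v)\geq 0$ rather than the one you wrote.

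In short: your outline correctly identifies the right ingredients (elliptic interior points, a compactly embedded Cartan subalgebra, $2$-step nilpotency of the nilradical, a Spindler-type cone for the converse), but the individual steps are not correct as stated, and the paper itself sidesteps all of this by citing \cite{Ne00}.
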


\begin{prf} Properties (a)-(d) follows from 
\cite[p.~293]{Ne00}, combined with \cite[Thm.~VII.2.26]{Ne00}. 
Since $\ft$ is compactly embedded, (d) is equivalent to 
$\fz_V(\ft_\fl):= \{ v \in V \: [v,\ft_\fl] = \{0\}\} =  \{0\}.$ 
Further 
\[ \fz_{\fz(\fl)}(V) := \{ x \in \fz(\fl) \: [x,V] = \{0\}\} = 
 \fz(\fl) \cap \fz(\g) = \{0\}. \]
Therefore \cite[Thm.~VIII.2.7]{Ne00} implies that, 
$\g(\fl,V,\fz,\beta)$ is admissible if (a)-(e) are satisfied.
\end{prf}

\begin{cor}
  \mlabel{cor:brack-nondeg} {\rm(Non-degeneracy of $\beta$)} 
Let $\g = \g(\fl,V,\fz,\beta)$ be admissible and 
$0 \not=v \in V$. Then there exists an element $w \in V$ with 
$\beta(v,w) = [v,w] \not=0$.
\end{cor}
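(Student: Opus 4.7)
The plan is to read off the non-degeneracy of $\beta$ (in the sense demanded by the corollary) from condition (e) of Theorem~\ref{thm:spind}. That condition furnishes a linear functional $f \in \fz^*$ and an element $x \in \fl$ such that the Hamiltonian function
\[
H_x^f : V \to \R, \qquad H_x^f(v) = f(\beta(x.v,v))
\]
is positive definite on $V$. The key observation is that positive definiteness of $H_x^f$ already forces $\beta(v, x.v)$ to be nonzero for every $0 \neq v \in V$, and the bracket formula \eqref{eq:VII.2.1} then identifies $\beta(v,w)$ with the Lie bracket $[v,w]$ for $v,w \in V$.

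Concretely, for a fixed $0 \neq v \in V$, I would take $w := x.v \in V$, which lies in $V$ because $V$ is an $\fl$-module. Using the skew-symmetry of $\beta$,
\[
f(\beta(v,w)) = f(\beta(v, x.v)) = -f(\beta(x.v, v)) = -H_x^f(v) < 0,
\]
so $\beta(v,w) \neq 0$. By the Spindler bracket \eqref{eq:VII.2.1}, $[v,w] = \beta(v,w)$ in $\g$, which yields the claim.

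There is no real obstacle here: the entire content is packaged into property~(e), which guarantees that $f \circ \beta$ is a (non-degenerate) symplectic form on $V$. The only thing to verify is that $w = x.v$ is a legitimate choice in $V$, and this is automatic from the $\fl$-module structure. If one preferred to avoid choosing $w$ so explicitly, one could argue by contradiction: if $\beta(v,\,\cdot\,) = 0$ on $V$, then in particular $\beta(v, x.v) = 0$, contradicting $H_x^f(v) > 0$.
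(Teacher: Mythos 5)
Your proof is correct and takes the same route as the paper: both invoke Theorem~\ref{thm:spind}(e) to obtain $f$ and $x$ with $H_x^f$ positive definite, and both take $w := x.v$ to conclude $\beta(v,w)\neq 0$. You simply spell out one more step (using skew-symmetry of $\beta$ to see $f(\beta(v,w))=-H_x^f(v)<0$), which the paper leaves implicit.
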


\begin{prf} In the context of Theorem~\ref{thm:spind}(e), 
we see that $w := x.v$ satisfies $\beta(x.v,v) \not=0$. 
  \end{prf}

\begin{lem} \mlabel{lem:abideal} 
If $\g = \g(\fl,V,\fz,\beta)$ is admissible, then 
the following assertions hold: 
\begin{itemize}
\item[\rm(a)] Every abelian ideal of $\g$ is central. 
\item[\rm(b)] Every ideal of $\g$ contained in $V$ is trivial.  
\end{itemize}
\end{lem}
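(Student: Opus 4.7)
Both parts should fall out of the non-degeneracy built into Theorem~\ref{thm:spind}(e), together with the fact from Theorem~\ref{thm:spind}(a) that $\fu=\fz+V$ is the maximal nilpotent ideal (in particular, contains every abelian ideal) and $\fz=\fz(\g)$.

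\textbf{Step 1 (reduce an abelian ideal into $\fu$).} Let $\fa\trile\g$ be abelian. Then $\fa$ is nilpotent, so by Theorem~\ref{thm:spind}(a) $\fa\subseteq \fu=\fz\oplus V$; write each $a\in\fa$ as $a=a_\fz+a_V$.

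\textbf{Step 2 (produce the critical relation).} For $x\in\fl$, the Spindler bracket \eqref{eq:VII.2.1} gives $[x,a]=x.a_V\in V$, which lies in $\fa$ since $\fa$ is an ideal. Abelianness of $\fa$ then forces
\[
 0 = [a,[x,a]] = [a_V, x.a_V] = \beta(a_V, x.a_V)\in\fz
\]
for every $x\in\fl$.

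\textbf{Step 3 (contradict convex type).} By Theorem~\ref{thm:spind}(e), pick $f\in\fz^*$ and $x_0\in\fl$ with $v\mapsto f(\beta(x_0.v,v))$ positive definite. If $a_V\neq 0$, then
\[
 f\bigl(\beta(a_V,x_0.a_V)\bigr) = -f\bigl(\beta(x_0.a_V,a_V)\bigr) < 0,
\]
contradicting Step~2. Hence $a_V=0$, so $\fa\subseteq\fz=\fz(\g)$, proving (a).

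\textbf{Step 4 (deduce (b)).} If $\fa\trile\g$ with $\fa\subseteq V$, then $[\fa,\fa]\subseteq [V,V]\subseteq\fz$, while $[\fa,\fa]\subseteq\fa\subseteq V$; since the Spindler decomposition is direct, $V\cap\fz=\{0\}$ and $\fa$ is abelian. Part~(a) places $\fa\subseteq\fz\cap V=\{0\}$.

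The only mildly delicate point is Step~2: one must remember that one only knows $a\in\fa$ and $[x,a]\in\fa$ (not that $a_V\in\fa$), and then use abelianness on these two elements; after that the Spindler formula produces exactly the pairing $\beta(a_V,x.a_V)$ that the convex-type hypothesis in Theorem~\ref{thm:spind}(e) obstructs.
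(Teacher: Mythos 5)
Your proof is correct, and part (b) is essentially the same as the paper's argument. Part (a), however, takes a genuinely different route. The paper argues directly from admissibility: for a pointed generating invariant closed convex subset $C$ and $x \in C$, the abelianness of $\fa$ gives $e^{\ad \fa}x = x + [\fa,x] \subeq C$, an affine subspace through $x$, which pointedness of $C$ forces to be trivial; hence $[\fa,x]=\{0\}$ for all $x\in C$, and $C$ spans $\g$. That argument never invokes the Spindler decomposition at all. You instead first funnel $\fa$ into the nilradical $\fu = \fz\oplus V$ via Theorem~\ref{thm:spind}(a), then kill the $V$-component using the convex-type non-degeneracy of $\beta$ from Theorem~\ref{thm:spind}(e). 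Your version is more structural and concretely locates where the $V$-component dies, but it leans on the full force of the structure theorem, whereas the paper's proof needs only the defining property of admissibility; in particular the paper's proof of (a) would survive even without the Spindler presentation being available. Both routes are sound.
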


\begin{prf} (a) Let $\fa \trile \g$ be an abelian ideal and 
$C \subeq \g$ be a pointed generating invariant closed convex subset. 
For $x \in C$ we then have 
$e^{\ad \fa} x = x + [\fa,x] \subeq C$, and, since $C$ is pointed,  
$[x,\fa] = \{0\}$. As $\g = \spann(C)$, it follows that 
$[\g,\fa] = \{0\}$. 

\nin (b) If $\fa \subeq V$ is an ideal of $\g$, 
then $[V,\fa] \subeq [V,V] \cap \fa \subeq \fz\cap \fa = \{0\}$ implies that 
$\fa$ is abelian. In view of (a), 
$\fa$ is central, so that $\fa \subeq V \cap \fz = \{0\}$. 
\end{prf}

\begin{defn} (a) 
Let $\ft \subeq \g$ be a compactly embedded Cartan 
subalgebra, $\g_{\C}$ the complexification of $\g$, 
$z= x + i y \mapsto z^* := - x + iy$
the corresponding involution, and $\ft_\C$ the 
corresponding Cartan subalgebra of $\g_{\C}$. 
For a linear functional $\alpha \in \ft_\C^*$, we define the
{\it root space} 
\[ \g_{\C}^\alpha := \{ x \in \g_{\C} : (\forall y \in \ft_\C)\ [y,x] = 
\alpha(y)x\} \] 
 and write  
\[ \Delta := \Delta(\g_\C, \ft_\C) 
:= \{ \alpha \in \ft_\C^*\setminus \{0\}: 
\g_\C^\alpha \not= \{0\}\} \] 
for the set of {\it roots of $\g$.} 
%We further put 
%$ \g^{[\alpha]} := (\g_\C^\alpha
%\oplus \g_\C^{-\alpha}) \cap \g,$  
%where $[\alpha] := \{\alpha, - \alpha\}$. 

\nin (b)  A root $\alpha \in \Delta$ is called {\it semisimple} 
if $\alpha([z,z^*]) \not= 0$ holds for an element $z \in \g_\C^\alpha$. 
In this case, $[\g_\C^\alpha, \g_\C^{-\alpha}] = \C [z, z^*]$ contains a 
unique element $\alpha^\vee$
with $\alpha(\alpha^\vee) = 2$ which we call the 
{\it coroot of $\alpha$}. 
We write $\Delta_s$ for the set of semisimple roots and call the roots in 
$\Delta_r := \Delta \setminus \Delta_s$ the {\it solvable roots}.  

A semisimple root $\alpha$ is called {\it compact} 
if $\alpha^\vee \in \R^+ [z,z^*]$, i.e.\  if 
$\alpha([z, z^*]) > 0$. All other roots are called {\it non-compact}. 
We write $\Delta_k$, resp., $\Delta_p$ for the set of compact, resp., 
non-compact roots. We also set $\Delta_{p,s} := \Delta_p \cap
\Delta_s$.   

\nin (c) For each compact root $\alpha \in \Delta_k$, the linear mapping 
$ s_\alpha \: \ft \to \ft, x \mapsto x - \alpha(x) \alpha^\vee$ 
is a reflection in the hyperplane $\ker \alpha$. 
We write $\cW_\fk$ for the group generated by these
reflections. It is called the {\it Weyl group} 
of the pair $(\fk,\ft)$. 
According to \cite[Prop.~VII.2.10]{Ne00},
this group is finite. 
\end{defn}

\begin{defn} (a)  A subset $\Delta^+ \subeq \Delta$ is 
called a {\it positive system} 
if there exists an element $x_0 \in i\ft$ with 
\[  \Delta^+ = \{ \alpha \in \Delta : \alpha(x_0) > 0\} \] 
and $\alpha(x_0) \not= 0$ holds for all $\alpha \in \Delta$. 
A positive system $\Delta^+$ is said to be 
{\it adapted} 
if for $\alpha\in \Delta_k$ 
and $\beta \in \Delta_p^+$ we have $ \beta(x_0) > \alpha(x_0)$ for 
some $x_0$ defining $\Delta^+$. In this case, we call 
$\Delta_p^+ := \Delta^+\cap \Delta_p$ an {\it adapted system of
positive non-compact roots}. 

\nin (b) We associate to an adapted system $\Delta_p^+$ 
of positive non-compact roots the convex cones 
\[  C_{\rm min} := C_{\rm min}(\Delta_p^+) := 
\cone(\{ i[{z_\alpha}, z_\alpha^*] \: z_\alpha \in 
\g_\C^\alpha, \alpha\in \Delta_p^+ \}) \subeq \ft, \] 
and 
\[  C_{\rm max} := C_{\rm max}(\Delta_p^+) := \{ x \in  \ft \: (\forall \alpha \in 
\Delta_p^+)\, i \alpha(x) \geq 0\}. \] 
\end{defn} 

The structure theoretic concepts introduced above 
play a crucial role in the analysis of invariant convex 
subsets. In particular, \cite[Thm.~VII.3.8]{Ne00} asserts that 
the existence of a pointed generating $\Inn(\g)$-invariant closed convex cone 
$W \subeq \g$ implies the existence of a compactly embedded Cartan subalgebra 
(cf.\ Theorem~\ref{thm:spind}), 
and that, for every compactly embedded Cartan subalgebra 
$\ft$, there exists an adapted  positive system $\Delta_p^+$ with 
\[ C_{\rm min} \subeq W \cap \ft = p_\ft(W) \subeq C_{\rm max},\] where 
$p_\ft \: \g \to \ft$ denotes the projection with kernel 
$[\ft,\g]$. 
Moreover, $W$ is uniquely determined by $W \cap \ft$, 
the cone $C_{\rm min}$ is pointed. %, and $C_{\rm min} \subeq C_{\rm max}$. 
By \cite[Thm.~VIII.2.12]{Ne00} (cf.\ also \cite[Thm.~VIII.3.7]{Ne00}), 
for an adapted positive system 
$\Delta_p^+$ and an admissible Lie algebra, 
the pointedness of $C_{\rm min}$ 
implies that $C_{\rm min} \subeq C_{\rm max}$. 
Note that 
\[ C_{\rm min} = C_{\rm min,\fz} + 
\cone(\{ -i \alpha^\vee \: \alpha \in \Delta_{p,s}^+\}) \] 
is pointed if and only 
if $C_{\rm min,\fz}$ is pointed because 
$\{ \alpha^\vee \: \alpha \in \Delta_{p,s}^+\}$ is a finite 
subset contained in an open half space. 
If this condition is satisfied, then \cite[Prop.~VIII.3.7]{Ne00} 
shows that 
\[ W_{\rm max} := \{ x \in \g \: p_\ft({\cal O}_x) \subeq C_{\rm max} \} \] 
is a generating closed convex invariant cone with 
$W_{\rm max} \cap \ft = C_{\rm max}$, and 
\[  W_{\rm min} := \{ x \in \g \: p_\ft({\cal O}_x) \subeq C_{\rm
min}\} \] 
is a pointed, closed convex invariant cone with 
$W_{\rm min} \cap \ft = C_{\rm min}$. In general $W_{\rm min}$ is 
not generating. The most extreme situation occurs if $\g$ is a compact 
Lie algebra. 
Then $W_{\rm min} = \{0\}$ and $W_{\rm max} = \g$.

\section{Elements in pointed cones} 
\mlabel{sec:3}

In this section we study elements $x$ in an admissible Lie algebra 
$\g = \g(\fl,V,\fz,\beta)$ for which $\co(x)$ is pointed. 
Splitting $\fl$ into an ideal $\fl_0$ commuting with 
$V$ and an ideal $\fl_1$ acting effectively on $V$, 
reduces this problem to the two cases, where either $\g = \fl$ is reductive 
 (Subsection~\ref{subsec:3.2}) or 
 where the reductive subalgebra $\fl$ 
acts faithfully on $V$  (Subsection~\ref{subsec:3.3}).

\subsection{General observations} 
\mlabel{subsec:3.1}

Theorem~\ref{thm:spind} provides powerful structural 
information that is crucial to analyze the subsets 
$\g_{\rm c}$ and $\g_{\rm co}$ for non-reductive admissible Lie algebras. 
Throughout this section we write 
\[ \g = \fu \rtimes \fl 
= (\fz \oplus V) \rtimes \fl = \g(\fl,V,\fz,\beta) \quad \mbox{ with  } \quad 
\fz = \fz(\g) \quad \mbox{ and } \quad \beta(v,w) = [v,w].\]
The kernel 
$\fl_0 \trile \fl$ of the representation of $\fl$ on $V$ has a complementary 
ideal $\fl_1$, and $\g$ is a direct sum 
\begin{equation}
  \label{eq:g1g0}
\g = \g_1 \oplus \fl_0 
\quad \mbox{ with }  \quad 
\g_1 = \fz \oplus V \oplus \fl_1 = \g(\fl_1,V,\fz,\beta).
\end{equation}
Any $x \in \g$ decomposes accordingly as $x = x_1 + x_0$ with 
$x_1 \in \g_1$ and $x_0 \in \fl_0$, and 
\[ \co(x) = \co(x_1) \times \co(x_0)\] 
implies that 
\begin{equation}
  \label{eq:redux1}
\g_{\rm co} = \g_{1,{\rm co}} \times \fl_{0,{\rm co}}.
\end{equation}
This reduces the description of this set to the two cases, where 
$\g$ is reductive 
or the representation of $\fl$ on $V$ is faithful. 

\begin{lem}\mlabel{lem:v0v1}
For $x \in \fl$ we put 
\begin{equation}
  \label{eq:dualrel}
V_{x,0} := \{v \in V \: [x,v] = 0\} \quad \mbox{ and } \quad 
V_x := [x, V] \subeq V.
\end{equation}
Then the following assertions hold: 
\begin{itemize}
\item[\rm(a)] If $f \in \fz^*$ is such that $\omega := f \circ \beta$ 
is a symplectic form, then 
$V_x^{\bot_\omega} = V_{x,0}$ and $V_{x,0}^{\bot_\omega} = V_x$. 
\item[\rm(b)] 
$V_{x,0} = \{ v \in V \: [v,V_x] = \{0\}\} = V_x^{\bot_\beta}$ and 
$V_x = \{ v \in V \: [v,V_{x,0}] = \{0\}\} = V_{x,0}^{\bot_\beta}$.
\end{itemize}
  \end{lem}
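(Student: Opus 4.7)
The proof rests on the infinitesimal form of $\fl$-invariance of $\beta$, which is built into Definition \ref{def:spind}: for $x \in \fl$ and $v, w \in V$,
\[ \beta(x.v, w) + \beta(v, x.w) = 0. \]
Consequently the same skew-adjointness identity holds for $\omega = f \circ \beta$, so $\ad x|_V$ is a symplectic endomorphism of $(V,\omega)$. Everything else is elementary bilinear algebra on top of this single relation.

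For part (a), I would read off $V_x^{\bot_\omega}$ directly. An element $v$ lies in $V_x^{\bot_\omega}$ iff $\omega(x.u, v) = 0$ for every $u \in V$, which by skew-adjointness is equivalent to $\omega(u, x.v) = 0$ for every $u$, hence by non-degeneracy of $\omega$ to $x.v = 0$, i.e.\ $v \in V_{x,0}$. This gives $V_x^{\bot_\omega} = V_{x,0}$. The other identity $V_{x,0}^{\bot_\omega} = V_x$ then follows by taking the $\omega$-orthogonal once more and invoking the finite-dimensional double-orthogonal relation $W^{\bot_\omega\bot_\omega} = W$ for the non-degenerate form $\omega$.

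For part (b), one pair of inclusions is free from (a): the same skew-adjointness calculation, now with $\beta$ in place of $\omega$, gives $V_{x,0} \subseteq V_x^{\bot_\beta}$ and $V_x \subseteq V_{x,0}^{\bot_\beta}$, and no non-degeneracy is required for this direction. For the reverse inclusions I would invoke (a): since $\g$ is admissible, Theorem \ref{thm:spind}(e) supplies an $f \in \fz^*$ making $\omega = f \circ \beta$ symplectic, and for any subspace $W \subseteq V$ the tautological inclusion $W^{\bot_\beta} \subseteq W^{\bot_\omega}$ holds because $\omega$ factors through $\beta$. Applying this with $W = V_x$ and $W = V_{x,0}$ and chaining with (a) yields $V_x^{\bot_\beta} \subseteq V_x^{\bot_\omega} = V_{x,0}$ and $V_{x,0}^{\bot_\beta} \subseteq V_{x,0}^{\bot_\omega} = V_x$, which together with the free inclusions close the proof.

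There is no real obstacle here; the only non-elementary input is the existence of an $f$ making $f \circ \beta$ symplectic, which is part of the structure theorem for admissible Lie algebras and is already available in this section. The whole argument is essentially a two-line symplectic-annihilator computation plus one passage from $\omega$-orthogonals to $\beta$-orthogonals.
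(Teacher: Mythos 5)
Your proof is correct and essentially matches the paper's: both rest on $\ad_V x$ being infinitesimally symplectic for the form $\omega = f\circ\beta$ supplied by Theorem~\ref{thm:spind}(e), and on elementary annihilator arithmetic in $(V,\omega)$. The only cosmetic differences are that the paper obtains $V_{x,0}^{\bot_\omega}=V_x$ by a dimension count rather than by the double-orthogonal relation, and that in (b) it proves $V_x^{\bot_\beta}=V_{x,0}$ directly from the non-degeneracy of $\beta$ (Corollary~\ref{cor:brack-nondeg}) and only reduces the second identity to (a), whereas you treat both identities uniformly by sandwiching $W^{\bot_\beta}$ between the obvious inclusion and $W^{\bot_\omega}$.
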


  \begin{prf} 
(a) Since $\ad_V x := \ad(x)\res_V \in \sp(V,\omega)$, 
we have 
\[  V_x^{\bot_\omega} 
= \{  v \in V \: \omega(v, V_x) = \{0\}\}
= \{  v \in V \: \omega([x,v], V) = \{0\}\} = V_{x,0}.\] 
Now $V_{x,0}^{\bot_\omega} = V_x$ follows from $V_x \subeq V_{x,0}^{\bot_\omega}$ and 
$\dim V_{x,0}^{\bot_\omega} = \dim V - \dim V_{x,0} = \dim V_x$. 

\nin (b) For $v \in V_{x,0}$, we have 
$\beta(v,[x,V]) = - \beta([x,v],V)= \{0\}$, i.e., 
$\beta(V_{x,0}, V_x) =\{0\}$. Moreover, the non-degeneracy 
of $\beta$ (Corollary~\ref{cor:brack-nondeg}) 
shows that $\{0\} = \beta(v,[x,V]) = \beta([x,v],V)$ implies 
$[x,v] = 0$, i.e., $V_x^{\bot_\beta} = V_{x,0}$. 

To see that we also have $V_{x,0}^{\bot_\beta} \subeq  V_x$, 
we use (a) and the existence of an $f \in \fz^*$ 
for which $\omega := f \circ \beta$ 
is symplectic (Theorem~\ref{thm:spind}) to see that 
$V_{x,0}^{\bot_\beta} \subeq  V_{x,0}^{\bot_\omega}  = V_x.$ 
\end{prf}

For $y \in V$ and $x = x_\fz + x_V  + x_\fl \in \g$, we have 
\begin{align} \label{eq:Vconj}
e^{\ad y} x 
&= x + [y,x] + \frac{1}{2}[y,[y,x]] \notag\\ 
&= x + [y,x_V] + [y,x_\fl] + \frac{1}{2}(\ad y)^2 x_\fl \notag \\
&= \underbrace{\Big(x_\fz + [y,x_V] +  \frac{1}{2} [y, [y,x_\fl]]\Big)}_{\in \fz} 
+ \underbrace{(x_V -[x_\fl,y])}_{\in V} + \underbrace{x_\fl}_{\in \fl}.
\end{align}
This rather simple formula will be a key tool throughout this paper. 
We conclude in particular that 
$e^{\ad y}x \in \fz + \fl$ is equivalent to the vanishing of the 
$V$-component, i.e., to 
\[ x_V = [x_\fl,y].\] 
As $\Inn(\g) = e^{\ad V} \Inn_\g(\fl)$, 
\begin{equation}
  \label{eq:conjcrit}
 \cO_x  \cap (\fz + \fl)\not=\eset \quad \mbox{ if and only if } \quad 
x_V \in [x_\fl,V] = V_x.
\end{equation}
If this condition is satisfied, then \eqref{eq:Vconj}, applied with 
$x_V = 0$, shows that $e^{\ad V}x \cap (\fz + \fl)$ is a single element. 
For $x \in \fz + \fl$, we thus obtain 
\begin{equation}
  \label{eq:adorb}
 \cO_x  \cap (\fz + \fl) = \Inn_\g(\fl)x =: \cO_x^\fl.
\end{equation}

The following Reduction Theorem can be used to reduce many question 
concerning the sets $\co(x)$ and $C_x$ to elements in reductive Lie algebras. 
\begin{thm} \mlabel{thm:conj1} {\rm(Reduction Theorem)}
Let $\g\cong \g(\fl,V,\fz,\beta)$ be an admissible Lie algebra. 
If $\co(x)$ is pointed, then 
$\cO_x  \cap (\fz+ \fl)\not=\eset$. 
\end{thm}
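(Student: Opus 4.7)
The plan is to prove the contrapositive: assuming $\cO_x \cap (\fz + \fl) = \emptyset$, I will exhibit an explicit affine line inside the orbit $\cO_x$, hence inside $\co(x)$. By the criterion \eqref{eq:conjcrit}, the hypothesis $\cO_x \cap (\fz + \fl) = \emptyset$ is equivalent to
$$x_V \notin V_{x_\fl} := [x_\fl, V].$$
The candidate lines will be produced by conjugating $x$ with one-parameter subgroups $e^{t\, \ad y}$ where $y$ lies in $V_{x_\fl, 0} = \{v \in V : [x_\fl, v] = 0\}$, using formula \eqref{eq:Vconj} as the main computational tool.

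The first step is to specialize \eqref{eq:Vconj} to such a $y$. Since $[x_\fl, y] = 0$, both the correction to the $V$-component (which is $-t[x_\fl, y]$) and the quadratic term $\tfrac{t^2}{2}[y,[y,x_\fl]]$ vanish, so the formula collapses to
$$e^{t\,\ad y}\, x \;=\; x + t\,[y, x_V],$$
with $[y, x_V] = \beta(y, x_V) \in \fz$. Thus $\{x + t[y,x_V] : t \in \R\}$ is an affine line contained in $\cO_x$ whenever $[y, x_V] \neq 0$, and then certainly contained in $\co(x)$, contradicting pointedness.

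The main obstacle, and the only nontrivial point, is to produce a $y \in V_{x_\fl, 0}$ with $[y, x_V] \neq 0$. This is precisely where the duality statement in Lemma~\ref{lem:v0v1}(b) does the work: it identifies $V_{x_\fl} = V_{x_\fl,0}^{\bot_\beta}$, so the linear map
$$\phi \colon V_{x_\fl, 0} \to \fz, \qquad y \mapsto [y, x_V],$$
vanishes identically if and only if $x_V \in V_{x_\fl,0}^{\bot_\beta} = V_{x_\fl}$. Under our standing assumption $x_V \notin V_{x_\fl}$ the map $\phi$ is therefore nonzero, which yields the desired $y$ and completes the argument. Note that Lemma~\ref{lem:v0v1}(b) is itself built on the non-degeneracy of $\beta$ (Corollary~\ref{cor:brack-nondeg}) via the existence of a symplectic $f\circ\beta$ from Theorem~\ref{thm:spind}(e), so admissibility of $\g$ is used precisely at this juncture.
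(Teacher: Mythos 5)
Your proposal is correct and is essentially the paper's own argument, merely cast in contrapositive form. The paper observes directly that $e^{\ad V_{x_\fl,0}}\,x = x + [V_{x_\fl,0}, x_V]$ is an affine subspace of $\cO_x$ (via \eqref{eq:Vconj}), which pointedness of $\co(x)$ forces to be trivial, whence $x_V \in V_{x_\fl}$ by Lemma~\ref{lem:v0v1}(b) and $\cO_x \cap (\fz+\fl)\neq\eset$ by \eqref{eq:conjcrit}; you run the same computation and the same duality lemma in reverse. The identification of formula \eqref{eq:Vconj} and Lemma~\ref{lem:v0v1}(b) as the two load-bearing ingredients, and the remark that admissibility enters precisely through the non-degeneracy of $\beta$, match the paper exactly.
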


\begin{prf} Write $x = x_\fz + x_V + x_\fl$. 
From \eqref{eq:Vconj} we derive in particular that 
$e^{\ad V_{x_\fl,0}} x = x + [V_{x_\fl,0}, x_V]$ 
is an affine subspace. If $\co(x)$ is pointed, this affine subspace 
is trivial, so that 
$[V_{x_\fl,0}, x_V]=\{0\}$. Now $x_V \in V_{x_\fl}$  
follows from Lemma~\ref{lem:v0v1}(b) and the assertion follows from \eqref{eq:conjcrit}. 
\end{prf}

\begin{cor} \mlabel{cor:1.7} 
Let $\g\cong \g(\fl,V,\fz,\beta)$ be an admissible Lie algebra. 
Then every $\ad$-nilpotent element 
$x\in \g$ with $\co(x)$ pointed is 
conjugate under inner automorphisms to an element of 
$\fz + \fs$ for $\fs= [\fl,\fl]$. 
Any $\ad$-nilpotent element of $\fz + \fl$ is contained in $\fz + \fs$. 
\end{cor}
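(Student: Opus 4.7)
The plan is to establish the second, purely structural assertion first, and then derive the first by invoking the Reduction Theorem.

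For the structural assertion, I would take an $\ad$-nilpotent $x = x_\fz + x_\fl \in \fz + \fl$ and use the reductive splitting $\fl = \fz(\fl) \oplus \fs$ to decompose $x_\fl = x_{\fz(\fl)} + x_\fs$. The key observation is that $\fz(\fl)$ sits inside the compactly embedded Cartan subalgebra $\ft_\fl \subeq \ft$ from Theorem~\ref{thm:spind}(c), so that $\ad x_{\fz(\fl)}$ is semisimple on $\g$ with purely imaginary spectrum over $\C$. I would then invoke the Jordan decomposition $x_\fs = (x_\fs)_s + (x_\fs)_n$ inside the semisimple Lie algebra $\fs$; since every finite-dimensional representation of a semisimple Lie algebra preserves Jordan decompositions, the operators $\ad (x_\fs)_s$ and $\ad (x_\fs)_n$ are respectively semisimple and nilpotent on all of $\g$. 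All three operators $\ad x_{\fz(\fl)}$, $\ad (x_\fs)_s$, $\ad (x_\fs)_n$ commute pairwise, because $x_{\fz(\fl)}$ is central in $\fl$ and $\ad$ is a Lie algebra homomorphism.

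Consequently, the abstract Jordan decomposition of $\ad x$ on $\g$ has semisimple part $\ad(x_{\fz(\fl)} + (x_\fs)_s)$ and nilpotent part $\ad(x_\fs)_n$. Nilpotency of $\ad x$ forces the semisimple part to vanish, placing $x_{\fz(\fl)} + (x_\fs)_s$ in $\ker(\ad) = \fz(\g) = \fz$ by Theorem~\ref{thm:spind}(a). The Spindler splitting yields $\fz \cap \fl = \{0\}$, so $x_{\fz(\fl)} + (x_\fs)_s = 0$, and the reductive decomposition $\fz(\fl) \cap \fs = \{0\}$ then forces $x_{\fz(\fl)} = 0$ and $(x_\fs)_s = 0$. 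Hence $x_\fl = (x_\fs)_n \in \fs$ and $x \in \fz + \fs$.

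For the first assertion, I would apply the Reduction Theorem~\ref{thm:conj1} to the $\ad$-nilpotent $x$ with $\co(x)$ pointed in order to produce a conjugate $y \in \cO_x \cap (\fz + \fl)$, which is still $\ad$-nilpotent since this property is preserved under conjugation. The structural case just established then gives $y \in \fz + \fs$, yielding the desired conjugacy. The only subtle step is the correct identification of the Jordan decomposition of $\ad x$ on $\g$ from the separate decompositions of $x_{\fz(\fl)}$ and $x_\fs$; this rests precisely on the compactly embedded character of $\fz(\fl)$ in $\g$ and on the preservation of Jordan decomposition by representations of semisimple Lie algebras, both of which are at our disposal in the admissible setting.
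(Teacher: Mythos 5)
Your proof is correct and follows the same overall strategy as the paper: reduce to $x = x_\fz + x_\fl \in \fz + \fl$ via Theorem~\ref{thm:conj1}, decompose $x_\fl$ along $\fl = \fz(\fl)\oplus\fs$, identify the semisimple and nilpotent parts of $\ad x$ using the compact embedding of $\fz(\fl) \subseteq \ft_\fl$, and conclude from the nilpotency of $\ad x$ that the semisimple part must vanish. In fact you are slightly more careful than the paper's own proof, which directly asserts that the $\fs$-component $x_\fs$ is nilpotent before claiming $\ad(x_0)+\ad(x_\fs)$ is the Jordan decomposition of $\ad x_\fl$; your extra step of Jordan-decomposing $x_\fs = (x_\fs)_s + (x_\fs)_n$ inside the semisimple algebra $\fs$, invoking preservation of Jordan decompositions under the representation $\ad_\g|_\fs$, and identifying the semisimple part of $\ad x$ as $\ad\bigl(x_{\fz(\fl)} + (x_\fs)_s\bigr)$ correctly handles the a priori possibility that the $\fs$-component of $x_\fl$ has a nonzero semisimple part, which the paper's terser wording skips over.
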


\begin{prf} By Theorem~\ref{thm:conj1}, we may assume that 
$x = x_\fz + x_\fl \in \fz + \fl$, i.e., that $x_V = 0$. 
Then $\ad x = \ad x_\fl$ is nilpotent. 
Write $x_\fl = x_0 + x_\fs$, where $x_\fs \in \fs$ is nilpotent 
and $x_0 \in \fz(\fl)$. 
Since ${[x_0, x_\fs] = 0}$, $\ad(x_\fs)$ is nilpotent and 
$\ad(x_0)$ is semisimple because $\fz(\fl)\subeq \ft_\fl$ is compactly embedded, 
the decomposition 
$\ad(x_\fl) = \ad(x_0) + \ad(x_\fs)$ is the Jordan decomposition 
of $\ad(x_\fl)$. Therefore the nilpotency of this element 
implies $\ad(x_0) = 0$, hence that $x_0 = 0$ because 
$\fz(\fl) \cap \fz(\g) \subeq \fl \cap \fz = \{0\}$. We conclude that 
$x_\fl = x_\fs \in \fs$ and thus  $x \in \fz + \fs$. 
\end{prf}

\begin{cor} \mlabel{cor:2.6} Let $W \subeq \g$ be a pointed generating invariant 
cone. If $x = x_\fz + x_\fs \in W \cap (\fz(\g) + \fs)$ is 
$\ad$-nilpotent, then $x_\fz \in  W$ and $x_\fs  \in W$.
\end{cor}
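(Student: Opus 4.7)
The plan is to exploit a Jacobson--Morozov $\fsl_2$-triple for $x_\fs$ together with the closedness and $\Inn(\g)$-invariance of $W$ to separate $x$ into its central and semisimple components.

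First, since $x_\fz \in \fz(\g)$, we have $\ad x = \ad x_\fs$, so the nilpotency of $\ad x$ forces $\ad_\g(x_\fs)$ to be nilpotent; restricting to the invariant subspace $\fs$, the element $x_\fs$ is nilpotent in the semisimple Lie algebra $\fs = [\fl,\fl]$ (which is semisimple since $\fl$ is reductive by Theorem~\ref{thm:spind}). If $x_\fs = 0$ there is nothing to prove, so assume $x_\fs \neq 0$. By the Jacobson--Morozov theorem applied in $\fs$, there exists $h \in \fs$ with $[h, x_\fs] = 2 x_\fs$. Since $\ad h$ annihilates $\fz(\g)$, we obtain
\[ e^{t \ad h} x = x_\fz + e^{2t} x_\fs \qquad \text{for all } t \in \R. \]

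Next, I use the two one-parameter families obtained by letting $t \to \pm\infty$. Because $h \in \g$ and $W$ is $\Inn(\g)$-invariant, we have $e^{-t\ad h}x = x_\fz + e^{-2t}x_\fs \in W$ for every $t$. Sending $t \to +\infty$ and using that $W$ is closed yields $x_\fz \in W$. For the other component, since $W$ is a (closed convex) cone, the rescaled elements
\[ e^{-2t}\, e^{t \ad h} x = e^{-2t} x_\fz + x_\fs \]
also lie in $W$ for all $t$; letting $t \to +\infty$ and using closedness once more gives $x_\fs \in W$.

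The only potential obstacle is verifying that Jacobson--Morozov is genuinely available, i.e., that $\fs$ is semisimple and $x_\fs$ is nilpotent as an element of $\fs$. Both are immediate from the structure theorem (Theorem~\ref{thm:spind}), which guarantees $\fl$ is reductive, together with the Jordan-decomposition observation from Corollary~\ref{cor:1.7}. Everything else is a routine limit argument exploiting precisely the three defining properties of $W$: invariance, closedness, and the cone property.
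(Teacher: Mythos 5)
Your proof is correct and follows exactly the same route as the paper: choose $h\in\fs$ via Jacobson--Morozov with $[h,x_\fs]=2x_\fs$, observe that $\ad h$ annihilates $\fz(\g)$, and take the two limits $e^{-t\ad h}x\to x_\fz$ and $e^{-2t}e^{t\ad h}x\to x_\fs$ inside the closed invariant cone $W$. The only difference is that you spell out the justification that $\fs$ is semisimple and $x_\fs$ nilpotent therein, which the paper leaves implicit.
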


\begin{prf} We may assume that the nilpotent element $x_\fs\in \fs$ is non-zero, 
otherwise the assertion is trivial. 
For any $h \in \fs$ with $[h,x_\fs] = 2 x_\fs$ 
(Jacobson--Morozov Theorem, \cite[Ch.~VIII, \S 11, Prop.~2]{Bo90}), 
we have 
\[  x_\fs = \lim_{t \to \infty} e^{-2t} e^{t \ad h} x \in W \quad \mbox{ and } \quad 
x_\fz = \lim_{t \to \infty} e^{-t \ad h} x \in W. \qedhere\] 
\end{prf}

\begin{lem}
If $x$ is a nilpotent element of the pointed generating invariant cone $W \subeq \g$ 
and $x_\fs \not=0$, then there exists a Lie subalgebra 
$\fm \subeq \g$, isomorphic to $\gl_2(\R)$, such that 
 $\fz(\fm) \subeq \fz(\g)$ and $\fm \cap W$ is pointed and generating. 
\end{lem}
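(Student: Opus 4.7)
The plan is to produce $\fm$ as the direct sum of a copy of $\fsl_2(\R)$ containing $x_\fs$ (coming from a Jacobson--Morozov triple) and a one-dimensional central subalgebra lying in $\fz(\g) \cap W$.

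First, by Corollary~\ref{cor:1.7}, after replacing $x$ by a suitable $\Inn(\g)$-conjugate (which preserves $W$ and the isomorphism type of any subalgebra to be constructed), I may assume $x = x_\fz + x_\fs \in \fz(\g) + \fs$, and Corollary~\ref{cor:2.6} then gives $x_\fz, x_\fs \in W$. Applying the Jacobson--Morozov theorem in the semisimple Lie algebra $\fs$ to the nonzero nilpotent $x_\fs$ furnishes $h, y \in \fs$ with $[h, x_\fs] = 2 x_\fs$, $[h, y] = -2y$, and $[x_\fs, y] = h$. Set $\fm_0 := \R h + \R x_\fs + \R y \cong \fsl_2(\R)$. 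Since $\Inn_\g(\fm_0)$ restricts on $\fm_0$ to $\Inn(\fm_0)$, the orbit $\Inn(\fm_0)\cdot x_\fs \subeq W \cap \fm_0$ is one nappe of the nonzero nilpotent cone of $\fsl_2(\R)$; its closed convex hull is already a pointed, generating cone in $\fm_0$, so $W \cap \fm_0$ is pointed and generating in $\fm_0$.

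Next I produce a nonzero element $z \in \fz(\g) \cap W$. If $x_\fz \neq 0$, set $z := x_\fz$. Otherwise $x = x_\fs$, and the conjugation formula \eqref{eq:Vconj} specializes, for every $v \in V$ and $t \in \R$, to
\[ \tfrac{1}{2}\bigl( e^{t \ad v} x_\fs + e^{-t \ad v} x_\fs \bigr) = x_\fs + \tfrac{t^2}{2}[v, [v, x_\fs]] \in W. \]
Multiplying by $1/t^2$ and letting $t \to \infty$, closedness of $W$ yields $\tfrac{1}{2}[v, [v, x_\fs]] \in W \cap \fz(\g)$ for every $v$. The Jacobi identity combined with $[v, w] \in \fz$ supplies the symmetry $[v, [w, x_\fs]] = [w, [v, x_\fs]]$, so by polarization $[v, [v, x_\fs]]$ vanishes identically in $v$ only if $[V, [V, x_\fs]] = 0$, and by nondegeneracy of $\beta$ (Corollary~\ref{cor:brack-nondeg}) this is equivalent to $[V, x_\fs] = 0$. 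Hence a nonzero $z$ exists whenever $x_\fs$ acts nontrivially on $V$; in the residual case $x_\fz = 0$ and $[V, x_\fs] = 0$ one has $x_\fs \in \fl_0$, and $z$ is obtained from $\fz(\fl_0) \cap W \subeq \fz(\g) \cap W$ using the structure theory of pointed invariant cones applied to the direct summand $\fl_0$ of~\eqref{eq:g1g0}.

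Finally, define $\fm := \fm_0 + \R z$. Centrality of $z$ in $\g$ and semisimplicity of $\fm_0 \subeq \fs$ give $[z, \fm_0] = 0$ and $\R z \cap \fm_0 = \{0\}$, so $\fm \cong \fsl_2(\R) \oplus \R \cong \gl_2(\R)$ with $\fz(\fm) = \R z \subeq \fz(\g)$. Moreover, $\fm \cap W \supeq (W \cap \fm_0) + \R_+ z$ spans $\fm$, hence $\fm \cap W$ is generating in $\fm$, while pointedness is inherited from $W$. The main obstacle is the construction of the central element $z$ in the degenerate case $x_\fz = 0$ and $[V, x_\fs] = 0$, where the direct conjugation computation via~\eqref{eq:Vconj} yields nothing and one must invoke the pointed-invariant-cone structure of the reductive ideal $\fl_0$.
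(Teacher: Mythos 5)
For the case $x_\fz \neq 0$ your argument coincides with the paper's: reduce by Corollary~\ref{cor:1.7} to $x = x_\fz + x_\fs \in \fz(\g) + \fs$, take a Jacobson--Morozov copy of $\fsl_2(\R)$ through $x_\fs$, use Corollary~\ref{cor:2.6} to see $x_\fz, x_\fs \in W$, and observe that the $\Inn$-orbit of $x_\fs$ already generates a pointed generating cone in that $\fsl_2(\R)$. Your remark that the closed convex hull of this orbit is the full Lorentz cone of $\fsl_2(\R)$ is correct and is exactly what makes $W\cap\fm$ generating. You have also correctly noticed that the paper's written proof stops after ``If $x_\fz \neq 0$, then \ldots'' and never addresses $x_\fz = 0$, and your sub-case $x_\fz = 0$, $[V,x_\fs] \neq 0$ is treated soundly: $\tfrac{1}{2}[v,[v,x_\fs]] \in W \cap \fz(\g)$ follows from \eqref{eq:Vconj} and the closedness of $W$ (this is in essence Lemma~\ref{lem:centlimcon}), and polarization together with Corollary~\ref{cor:brack-nondeg} show it is nonzero for some~$v$.

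The remaining sub-case, $x_\fz = 0$ and $[V,x_\fs] = \{0\}$, is a genuine gap in your proposal, and it cannot be patched the way you suggest. Your proposed source of the central element, $\fz(\fl_0)\cap W$, is vacuous: in the admissible framework the proof of Theorem~\ref{thm:spind} yields $\fz_{\fz(\fl)}(V) = \fz(\fl)\cap\fz(\g) = \{0\}$, and since $\fl_0$ acts trivially on $V$ and $\fz(\fl_0)\subeq\fz(\fl)$, one gets $\fz(\fl_0) \subeq \fz_{\fz(\fl)}(V) = \{0\}$; thus $\fl_0$ is automatically semisimple and supplies no central $z$. Worse, the statement itself fails in this sub-case: take $\g = \fsl_2(\R)$ (admissible hermitian, with $V=\{0\}$, $\fz(\g)=\{0\}$), $W$ its minimal invariant cone, and $x$ any nonzero nilpotent element. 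Then $x_\fs = x \neq 0$, $x_\fz = 0$, and $\dim\g = 3 < 4 = \dim\gl_2(\R)$, so no subalgebra $\fm$ as in the lemma can exist. The lemma as printed therefore implicitly requires $x_\fz \neq 0$ (which is precisely what the paper's proof establishes), and what you attempt to prove in the residual sub-case is not true in general.
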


\begin{prf} By Corollary~\ref{cor:1.7}, we may assume that
$x = x_\fz + x_\fs \in \fz(\g) + \fs$ holds for a Levi complement~$\fs$.
We first choose an $\fsl_2(\R)$-subalgebra 
$\fs_x \subeq \fs$ containing $x_\fs$ (cf.\ Proposition~\ref{prop:jac-mor}). If 
$x_\fz \not=0$, then 
\[ x = x_\fz + x_\fs \in \fm := \R x_\fz + \fs_x \cong \gl_2(\R).\] 
Further, Corollary~\ref{cor:2.6} implies that 
$\fm \cap W$ contains $x_\fz$ and $x_\fs$, hence is generating in $\fm$. 
\end{prf}

The following observation provides some information 
on the central part of $\lim(\co(x))$. 

\begin{lem} \mlabel{lem:centlimcon} 
For $x = x_\fz + x_V + x_\fl \in \g = \g(\fl,V,\fz,\beta)$, we consider the 
cone 
\[ C_{x,\fz} := C_{x_\fl,\fz} := \cone(\{[y,[y,x_\fl]] \: y \in V \}).\] 
Then 
\begin{equation}
  \label{eq:czlim}
 C_{x,\fz} \subeq \lim(\co(x)) \cap \fz.
\end{equation}
\end{lem}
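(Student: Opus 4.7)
The plan is to exploit formula \eqref{eq:Vconj} after replacing $y$ by $ty$ and letting $t \to \infty$, so that the quadratic term $\frac{t^2}{2}[y,[y,x_\fl]]$ dominates. More precisely, for $y \in V$ and $t \in \R$, applying \eqref{eq:Vconj} to $ty$ gives
\[
e^{\ad(ty)} x = \Big(x_\fz + t[y,x_V] + \tfrac{t^2}{2}[y,[y,x_\fl]]\Big) + (x_V - t[x_\fl,y]) + x_\fl \in \co(x),
\]
with the three summands lying in $\fz$, $V$, and $\fl$ respectively. Note that $[y,[y,x_\fl]] \in [V,V] \subeq \fz$, so the target element indeed lies in $\fz$.

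Next, I would apply the second characterization of the recession cone from the definition, namely $\lim(C) = \{ \lim_{n\to\infty} t_n c_n : c_n \in C,\ t_n \to 0,\ t_n \geq 0\}$. Choosing $c_n := e^{\ad(ny)}x \in \co(x)$ and $t_n := 2/n^2$, a direct computation shows
\[
t_n c_n = [y,[y,x_\fl]] + \tfrac{2}{n}\bigl([y,x_V] - [x_\fl,y]\bigr) + \tfrac{2}{n^2}(x_\fz + x_V + x_\fl) \;\longrightarrow\; [y,[y,x_\fl]],
\]
so that $[y,[y,x_\fl]] \in \lim(\co(x))$. Combined with the preceding remark, this yields $[y,[y,x_\fl]] \in \lim(\co(x)) \cap \fz$ for every $y \in V$.

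Finally, $\lim(\co(x)) \cap \fz$ is a closed convex cone (being the intersection of a closed convex cone with a linear subspace), so it contains the closed convex cone $C_{x,\fz}$ generated by the set $\{[y,[y,x_\fl]] : y \in V\}$. This yields the inclusion \eqref{eq:czlim}. There is no real obstacle here; the only thing to watch is the choice of the scaling $t_n \sim 1/n^2$ to balance the quadratic growth in $t$ of the $\fz$-component against the linear and constant growth of the $V$- and $\fl$-components.
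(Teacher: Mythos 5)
Your proof is correct and follows essentially the same approach as the paper: both use formula \eqref{eq:Vconj} applied to $ty$ and extract the quadratic-in-$t$ coefficient of the $\fz$-component via the recession-cone characterization $\lim(C) = \{\lim t_n c_n : c_n \in C, t_n \to 0^+\}$ (the paper writes this as $\lim_{t\to\infty} t^{-2}e^{\ad ty}x = \tfrac12[y,[y,x_\fl]]$). Your extra sentence noting that $\lim(\co(x)) \cap \fz$ is a closed convex cone, and hence contains $C_{x,\fz}$, just makes explicit a step the paper leaves implicit.
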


\begin{prf}   For $t \to \infty$, formula \eqref{eq:Vconj} leads to 
\[ \lim_{t \to \infty} t^{-2}(e^{\ad t y}x) 
= \frac{1}{2}[y,[y,x]] 
= \frac{1}{2}[y,[y,x_\fl]] \in \lim(\co(x))  \cap \fz.\qedhere\] 
\end{prf}

\subsection{Reductive Lie algebras} 
\mlabel{subsec:3.2}

If $\g$ is a simple real Lie algebra 
and $\fk \subeq \g$ a maximal compactly embedded subalgebra, 
then the existence of a pointed generating invariant cone $W$ 
implies the existence of a non-zero element $z \in \fz(\fk)$, i.e., 
that $\g$ is {\it hermitian}. If this is the case, then 
\[ W_{\rm min} := C_z \] 
is a minimal invariant cone, the dual cone 
\[ W_{\rm max} := W_{\rm min}^\star 
:= \{ x \in \g \: (\forall y \in W_{\rm min}) \ \kappa(x,y) \geq 0\} \] 
with respect to the non-degenerate form $\kappa(x,y) = - \tr(\ad x \ad y)$ 
is a maximal invariant cone containing $W_{\rm min}$, 
and any other pointed generating invariant cone 
$W$ either satisfies 
\[ W_{\rm min} \subeq W \subeq W_{\rm max} \quad \mbox{ or }\quad 
W_{\rm min} \subeq -W \subeq W_{\rm max},\] 
depending on whether $z \in W$ or $-z \in W$ 
(see the Kostant--Vinberg Theorem in \cite[Thm.~III.4.7]{HHL89}, \cite{Vin80}).

\begin{lem} \mlabel{lem:cN} {\rm(\cite[Thm.~III.9]{HNO94}}
%, \cite[Lemma~2.6]{Oeh21a})} 
Let $\g$ be semisimple %without compact ideals 
and $W \subeq \g$ be a pointed generating
 invariant cone. If $W' \supeq W$ is 
any pointed invariant cone containing $W$, then every 
nilpotent element $x \in W'$ is contained in $W$. 
\end{lem}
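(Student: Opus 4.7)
The plan is to reduce to the case of a simple hermitian Lie algebra and then combine the Kostant--Vinberg Theorem with the classification of nilpotent orbits in the maximal invariant cone.

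First I would decompose $\g = \bigoplus_i \g_i$ into simple ideals. As recalled in the introduction, no compact simple Lie algebra admits a pointed generating invariant cone (any nonzero invariant closed convex cone in a compact simple Lie algebra must contain $0$ in its interior and hence be all of $\g_i$), so every simple summand relevant to $W$ is hermitian. The element $x$ decomposes as $x = \sum_i x_i$ with each $x_i$ nilpotent, and it suffices to treat each simple hermitian summand separately.

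Reduced to the simple hermitian case, I would invoke the Kostant--Vinberg Theorem to produce extremal pointed generating invariant cones $W_{\min} \subeq W_{\max}$ such that, up to sign, every pointed generating invariant cone lies between them. Since $W'$ contains the generating cone $W$ and is pointed, $W'$ is itself generating; after fixing orientations consistently via $W_{\min} \subeq W$, we obtain the chain
\[
 W_{\min} \subeq W \subeq W' \subeq W_{\max}.
\]
It therefore suffices to show that every nilpotent element of $W_{\max}$ already lies in $W_{\min}$.

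This last step is the main obstacle and the technical heart of the argument. For it I would fix a compactly embedded Cartan subalgebra $\ft$ and an adapted positive system $\Delta_p^+$ and invoke the classification of nilpotent $\Inn(\g)$-orbits meeting $W_{\max}$. Such orbits are holomorphic: via the Kostant--Sekiguchi correspondence and the Harish-Chandra decomposition $\g_\C = \fp^- + \fk_\C + \fp^+$, each orbit contains a representative built from strongly orthogonal noncompact positive root vectors, and such a representative can be realized as a limit of semisimple elements of $C_{\min}$ under suitable one-parameter subgroups. This forces the orbit into $W_{\min}$, so that $x \in W_{\min} \subeq W$ and the proof is complete.
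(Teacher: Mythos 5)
The paper does not prove this lemma; it is cited as [HNO94, Thm.~III.9], so there is no in-paper argument to compare against, and your proposal is an attempted re-derivation of that external result. Your reduction steps are sound: compact simple summands support no pointed generating invariant cone; the cones $W_{\rm min}$ and $W_{\rm max}$ decompose as products over the simple hermitian ideals (since $C_{\rm min}$ and $C_{\rm max}$ do and the adjoint action factors through the ideals); $W'$ is generating because it contains the generating $W$; and $W$, $W'$ determine the same adapted positive system because $W\cap\ft$ and $W'\cap\ft$ have overlapping interiors, so the sandwich $W_{\rm min}\subeq W\subeq W'\subeq W_{\rm max}$ is legitimate. This correctly reduces the lemma to the single-ideal claim that every nilpotent element of $W_{\rm max}$ already lies in $W_{\rm min}$.

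The difficulty is that this reduced claim \emph{is} the content of [HNO94, Thm.~III.9], and your final paragraph only gestures at it. The appeal to the Kostant--Sekiguchi correspondence is a red herring here: that correspondence matches real nilpotent orbits with $K_\C$-orbits on $\fp_\C$, but what is actually needed is the Jacobson--Morozov/Cayley-transform machinery and the parametrization of nilpotent orbits meeting $W_{\rm max}$ by signed families of strongly orthogonal noncompact roots. Concretely, a genuine proof requires (i) showing that every nilpotent orbit meeting $W_{\rm max}$ contains a representative of the form $x=\sum_j e_j$ built from strongly orthogonal noncompact positive roots (this is the hard classification step in [HNO94, \S III]), and (ii) showing that such an $x$ lies in $W_{\rm min}$ --- for instance by completing each $e_j$ to an $\fsl_2$-triple $(e_j,h_j,f_j)$, observing that $e_j-f_j$ is $\Inn(\g)$-conjugate into $\pm C_{\rm min}$, and using $e^{-2t}e^{t\ad h_j}(e_j-f_j)\to e_j$ to conclude $x\in\lim(W_{\rm min})=W_{\rm min}$. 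Without step (i) spelled out, the argument is circular: you have reformulated the cited theorem rather than proved it. If you do not intend to reproduce the analysis of [HNO94, \S III], the honest move is to cite it, exactly as the paper does.
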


\begin{prop} \mlabel{prop:simpleherm} 
Let $\g$ be simple hermitian and $x \in \g$ with 
the Jordan decomposition $x = x_s + x_n$. Then 
the following are equivalent: 
\begin{itemize}
\item[\rm(a)] $C_x$ is pointed. 
\item[\rm(b)] $C_{x_s}$ and $C_{x_n}$ are pointed and, if $x_s \not=0$, 
then $x_n \in C_{x_s}$. 
\item[\rm(c)] $x \in W_{\rm max} \cup - W_{\rm max}$, where 
$W_{\rm max}$ is a maximal pointed invariant cone. 
\item[\rm(d)] $\co(x)$ is pointed. 
\end{itemize}
If $x_s \not=0$, then $C_x = C_{x_s}$. 
\end{prop}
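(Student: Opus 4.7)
I would organize the proof as (c) $\Rightarrow$ (a) $\Rightarrow$ (d) and (a) $\Leftrightarrow$ (b), closing the loop with (d) $\Rightarrow$ (c); the identity $C_x = C_{x_s}$ for $x_s \ne 0$ will fall out of the (a) $\Rightarrow$ (b) step. The main ingredients are the Jordan decomposition $x = x_s + x_n$ with $[x_s, x_n] = 0$, Jacobson--Morozov (Proposition~\ref{prop:jac-mor}), the Kostant--Vinberg classification of invariant cones in simple hermitian algebras, and Lemma~\ref{lem:cN}.

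The easy directions come first. For (c) $\Rightarrow$ (a), invariance of $W_{\rm max}$ gives $\cO_x \subseteq \pm W_{\rm max}$, so $C_x \subseteq \pm W_{\rm max}$ is pointed. For (a) $\Rightarrow$ (d), the inclusion $\co(x) \subseteq C_x$ rules out affine lines in $\co(x)$. For (a) $\Leftrightarrow$ (b), I would pick an $\fsl_2$-triple $(x_n, h, y)$ inside the derived subalgebra of the reductive centralizer $\g_{x_s}$; since $h$ commutes with $x_s$, we get $e^{t \ad h} x = x_s + e^{2t} x_n$. Sending $t \to -\infty$ places $x_s \in \oline{\cO_x} \subseteq C_x$, and rescaling by $e^{-2t}$ before sending $t \to +\infty$ places $x_n \in C_x$. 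So $C_{x_s}, C_{x_n} \subseteq C_x$ are both pointed under (a). If moreover $x_s \ne 0$, the non-zero pointed invariant cone $C_{x_s}$ is generating in the simple hermitian algebra $\g$ by Kostant--Vinberg; then Lemma~\ref{lem:cN} applied with $W := C_{x_s}$ and $W' := W_{\rm max}$ (which contains $C_x$, hence $x_n$) forces $x_n \in C_{x_s}$, proving (b). Combining $x \in C_{x_s}$ (giving $C_x \subseteq C_{x_s}$) with the $\Inn(\g)$-invariance of $C_x$ (giving $\cO_{x_s} \subseteq C_x$ and hence $C_{x_s} \subseteq C_x$) yields the formula $C_x = C_{x_s}$. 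The reverse (b) $\Rightarrow$ (a) is immediate: either $x_s = 0$ and $C_x = C_{x_n}$ is pointed by hypothesis, or $x \in C_{x_s}$ and $C_x \subseteq C_{x_s}$ is pointed.

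The main obstacle is (d) $\Rightarrow$ (c). I would argue through the recession cone $\lim(\co(x))$: it is pointed (since $\co(x)$ is), closed, convex, and $\Inn(\g)$-invariant, hence trapped in $\pm W_{\rm max}$ by Kostant--Vinberg (the alternative $\lim(\co(x)) = \{0\}$ is excluded for $x \ne 0$, since non-zero adjoint orbits in a non-compact simple $\g$ are unbounded). A Jordan-decomposition analysis then rules out $x \notin \pm W_{\rm max}$: a non-zero hyperbolic part $x_s^h$ would admit a Weyl reflection in a non-compactly embedded Cartan containing $x_s^h$ which sends $x_s^h \mapsto -x_s^h$, producing asymptotic orbit directions in both $\pm W_{\rm max}$; analogously, an elliptic $x_s$ with mixed signs of $\alpha(x_s)/i$ on $\Delta_p^+$ yields two-sided recession directions. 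Either scenario contradicts the one-sided inclusion $\lim(\co(x)) \subseteq \pm W_{\rm max}$. Hence $x_s \in \pm W_{\rm max}$, and the nilpotent $x_n$, as an asymptotic direction, lies in $\lim(\co(x)) \subseteq \pm W_{\rm max}$ with the same sign; so $x = x_s + x_n \in \pm W_{\rm max}$, proving (c). The delicate part of the whole argument is precisely this two-sidedness exclusion for hyperbolic and mis-signed elliptic components; everything else reduces to the Jordan decomposition and the Kostant--Vinberg machinery assembled in Section~\ref{sec:2}.
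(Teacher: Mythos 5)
Your handling of (c) $\Rightarrow$ (a) $\Rightarrow$ (d) and of (a) $\Leftrightarrow$ (b) (together with $C_x = C_{x_s}$) is essentially the paper's argument: the $\fsl_2$-triple inside the reductive centralizer of $x_s$ is exactly what Proposition~\ref{prop:jac-mor} and Corollary~\ref{cor:jac-mor} package, and the use of Lemma~\ref{lem:cN} and Kostant--Vinberg matches the text. The genuine divergence, and the genuine gap, is in (d) $\Rightarrow$ (c). The paper proves this by a fixed-point/barycenter argument: take a maximal compactly embedded $\fk$ with $K = \Inn_\g(\fk)$ compact, obtain a nonzero $K$-fixed interior point $z \in \co(x) \cap \fz(\fk)$, and use that the $K$-averaging projection $p_{\fz(\fk)}$ maps the invariant convex set $\co(x)$ into itself; pointedness of $\co(x)$ then forces $p_{\fz(\fk)}(\co(x)) \subeq [0,\infty)z$, which is exactly the condition $x \in W_{\rm max}$. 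Your route through $\lim(\co(x))$ and a refined Jordan-decomposition case analysis is only a sketch, and the crucial steps are asserted rather than proved.

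Concretely, there are three unsupported claims. First, the Weyl-reflection argument excludes a purely hyperbolic $x$ cleanly (a reflection $\sigma$ with $\sigma(x_h) = -x_h$ gives $\co(x_h) = -\co(x_h)$, so the recession cone is a subspace, contradicting pointedness once orbits are known to be unbounded), but for a mixed $x = x_e + x_h + x_n$ that same $\sigma$ does not fix $x_e$ or $x_n$, so $\sigma(x)$ is not usefully related to $x$ and the argument does not carry over; you would need to isolate the contribution of $x_h$ to $\lim(\co(x))$, which is not done. Second, "an elliptic $x_s$ with mixed signs of $\alpha(x_s)/i$ on $\Delta_p^+$ yields two-sided recession directions" is precisely the kind of statement that needs a root-space computation; it is not a corollary of anything stated earlier. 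Third, the assertion that $x_n$ lies in $\lim(\co(x)) \subeq \pm W_{\rm max}$ \emph{with the same sign} as $x_s$ is not justified, and without it one cannot conclude $x = x_s + x_n$ lies in a single $\pm W_{\rm max}$. In short: the strategy is plausible, but each of the three case-exclusion steps is where the real work would have to happen, and none of them is carried out; the paper's compact-group projection argument sidesteps the whole case analysis.
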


\begin{prf} (a) $\Rarrow$ (b): 
If $C_x$ is pointed, then $x_s, x_n \in C_x$ follows from 
Corollary~\ref{cor:jac-mor}. 
If $x_s \not=0$, we see that $C_{x_s} \subeq C_x$ is a non-trivial 
invariant cone, and now  Lemma~\ref{lem:cN} shows that 
$x_n \in C_{x_s}$. 

\nin (b) $\Rarrow$ (a): If $x_s = 0$ and $C_{x_n}$ is pointed,
then $C_x = C_{x_n}$ is pointed. 
If $x_s \not=0$, we further assume that $x_n \in C_{x_s}$. 
Then $x = x_s + x_n \in C_{x_s}$ implies 
$C_x \subeq  C_{x_s}$, so that $C_x$ is pointed because $C_{x_s}$ is 
assumed to be pointed. 

\nin (a) $\Leftrightarrow$ (c) follows from the Kostant--Vinberg Theorem 
(\cite[Thm.~III.4.7]{HHL89}, \cite{Vin80}). 

\nin That (d) follows from (a) is clear. 
Suppose that $\co(x)$ is pointed. We may assume that 
$x \not=0$, and observe that this implies that $\co(x)$ has interior points. 
Let $\fk \subeq \g$ be maximal compactly embedded. Then 
$K := \Inn_\g(\fk) \subeq \Aut(\g)$ is compact and 
$\co(x)^K = \co(x) \cap \fz(\fk)$ contains an interior point~$z$. 
Let $W_{\rm max}$ be the maximal pointed generating invariant cone containing~$z$. 

We claim that $x \in W_{\rm max}$. 
The projection $p_{\fz(\fk)} \: \g \to \fz(\fk) = \fz_\g(\fk)$  
is the fixed point projection for the action of the compact 
group $K$. Therefore it preserves closed convex invariant 
subsets. This shows that $p_{\fz(\fk)} (\co(x)) \subeq \co(x)$. 
As $\lim(\co(-z)) = - C_z$, the subset 
$p_{\fz(\fk)} (\co(x)) \subeq \fz(\fk) = \R z$ must be contained 
in the half line $[0,\infty)z$. Therefore 
\[ x \in W := \{ y \in \g \: p_{\fz(\fk)}(\cO_y) \subeq [0,\infty)z \}.\] 
The right hand side is a closed convex invariant cone containing $z$. Therefore 
$x \in W \subeq W_{\rm  max}$ implies that $C_x$ is pointed.
\end{prf}

\begin{cor} \mlabel{cor:herm} If $\g$ is simple hermitian, then 
\[ \g_{\rm c} =\g_{\rm co} = W_{\rm max} \cup - W_{\rm max}.\] 
\end{cor}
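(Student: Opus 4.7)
The plan is to observe that this corollary is an essentially immediate unpacking of Proposition~\ref{prop:simpleherm} together with the definitions in~\eqref{eq:gc1}. Indeed, recall that $\g_{\rm c} = \{x \in \g : C_x \text{ pointed}\}$ and $\g_{\rm co} = \{x \in \g : \co(x) \text{ pointed}\}$, so the content of the corollary is exactly the set-theoretic reformulation of the equivalences (a) $\Leftrightarrow$ (c) $\Leftrightarrow$ (d) in Proposition~\ref{prop:simpleherm}, once we know that ``$W_{\rm max}$'' is a well-defined set.

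First I would address the only mild ambiguity: the cone $W_{\rm max}$ depends on a choice of sign for the central element $z \in \fz(\fk)$, but the union $W_{\rm max} \cup -W_{\rm max}$ does not. By the Kostant--Vinberg theorem cited just before Lemma~\ref{lem:cN}, any pointed generating invariant cone $W \subeq \g$ satisfies $W \subeq W_{\rm max}$ or $W \subeq -W_{\rm max}$, so $W_{\rm max} \cup -W_{\rm max}$ is the union of \emph{all} pointed generating invariant cones, intrinsically defined.

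Then I would simply combine the equivalences of Proposition~\ref{prop:simpleherm}: the equivalence (a) $\Leftrightarrow$ (d) applied pointwise yields $\g_{\rm c} = \g_{\rm co}$, and the equivalence (a) $\Leftrightarrow$ (c) yields $\g_{\rm c} = W_{\rm max} \cup -W_{\rm max}$. Since Proposition~\ref{prop:simpleherm} is already established, there is no substantive obstacle; the only thing to double-check is that the trivial case $x = 0$ is covered, which it is since $0 \in W_{\rm max}$.
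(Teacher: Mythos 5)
Your proof is correct and matches the paper's intent: the corollary is stated without proof immediately after Proposition~\ref{prop:simpleherm} precisely because it is the pointwise reformulation of the equivalences (a)\,$\Leftrightarrow$\,(c)\,$\Leftrightarrow$\,(d) there. Your extra remark clarifying why $W_{\rm max}\cup -W_{\rm max}$ is well defined independent of the sign choice is a reasonable addition but does not change the substance.
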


\begin{prop} \mlabel{prop:3.10} 
{\rm($\g_{\rm co}$ for reductive Lie algebras)} 
Let $\g = \g_k + \g^1 + \cdots + \g^k$ be reductive, 
where $\g_k$ is the maximal compact 
ideal and the ideals $\g^j$ are simple non-compact. 
If $\g^j$ is hermitian, we write $W_{\rm max}^{\g_j}$ for a maximal proper 
invariant cone in $\g^j$ and otherwise we put $W_{\rm max}^{\g_j}:= \{0\}$. 
Then 
\[ \g_{\rm co} 
= \g_k \times \prod_{j = 1}^k (W_{\rm max}^{\g_j} \cup -W_{\rm max}^{\g_j}).\] 
\end{prop}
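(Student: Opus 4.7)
The plan is to reduce to the simple ideals via the direct sum compatibility of $\g_{\rm co}$ and then invoke the classification for each simple factor. Since $\g$ splits as the direct sum of the ideals $\g_k, \g^1, \ldots, \g^k$, iterating the product formula $\g_{\rm co} = \g_{1,{\rm co}} \times \g_{2,{\rm co}}$ from~\eqref{eq:redux1} yields
\[ \g_{\rm co} \;=\; \g_{k,{\rm co}} \times \prod_{j=1}^{k} \g^j_{\rm co}, \]
so it suffices to identify each factor separately.

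For the compact ideal $\g_k$ every adjoint orbit is bounded, hence $\co(x)$ is compact and in particular pointed; this gives $\g_{k,{\rm co}} = \g_k$. For a simple hermitian factor $\g^j$, Corollary~\ref{cor:herm} supplies $\g^j_{\rm co} = W_{\rm max}^{\g^j} \cup -W_{\rm max}^{\g^j}$ directly.

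The main obstacle is the case of a simple non-compact non-hermitian factor $\g^j$, where the claim reads $\g^j_{\rm co} = \{0\}$. Assume $x \in \g^j$ with $\co(x)$ pointed and consider the recession cone $\lim(\co(x))$: it is a closed $\Inn(\g^j)$-invariant convex cone, and its edge equals $H(\co(x)) = \{0\}$, so it is pointed. If it were non-zero, its linear span would be a non-zero $\Inn(\g^j)$-invariant subspace of $\g^j$, hence all of $\g^j$ by simplicity, producing a pointed generating invariant cone in $\g^j$ and thereby forcing $\g^j$ to be hermitian, contradicting the assumption. Consequently $\lim(\co(x)) = \{0\}$, so $\co(x)$ is compact and the orbit $\cO_x$ is bounded.

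To deduce $x = 0$, fix a Cartan decomposition $\g^j = \fk + \fp$. For every $y \in \fp$ the curve $\{e^{t \ad y} x : t \in \R\} \subseteq \cO_x$ is bounded; since $\ad y$ is self-adjoint with respect to the positive-definite form $-\kappa(\cdot,\theta\cdot)$ (with $\theta$ the Cartan involution), it is diagonalizable with real eigenvalues, and boundedness of the eigencomponent expansion of $e^{t\ad y}x$ forces $[y,x] = 0$. Thus $x \in \fz_{\g^j}(\fp)$, and for simple $\g^j$ this centralizer is trivial: its $\fk$-part lies in $\fz_\fk(\fp)$, which vanishes by faithfulness of the isotropy representation of $\fk$ on $\fp$, while any $z \in \fp$ centralizing $\fp$ satisfies $\ad(z)^2 = 0$ on $\g^j$ and is simultaneously semisimple, hence $z = 0$. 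This yields $x = 0$ and completes the proof.
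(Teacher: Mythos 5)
Your proof is correct and follows the same overall route as the paper: split $\g$ into its simple ideals via \eqref{eq:redux1}, then identify $\g^j_{\rm co}$ factor by factor. The one substantive difference is that you give a complete argument for the simple non-compact non-hermitian case, whereas the paper's proof simply cites Corollary~\ref{cor:herm} (which is stated only for hermitian $\g$) and relies on the discussion in the introduction for the non-hermitian assertion $\g^j_{\rm co}=\{0\}$. Your two-step argument -- that $\lim(\co(x))$ is a pointed invariant cone whose nontriviality would force $\g^j$ hermitian, so $\co(x)$ is compact and $\cO_x$ bounded, and then that a bounded adjoint orbit in a non-compact simple Lie algebra is trivial because $\ad y$ is $\kappa_\theta$-self-adjoint with real spectrum for $y\in\fp$ -- is sound. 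In particular your verification that $\fz_{\g^j}(\fp)=\{0\}$ is correct: $x_\fk=0$ by faithfulness of the isotropy representation, and $\ad(x_\fp)^2=0$ together with semisimplicity of $\ad(x_\fp)$ gives $x_\fp=0$. So your proof supplies an explicit argument for a case the paper treats as already established, while otherwise matching the paper's approach.
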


\begin{prf} This follows from $\g_{k,{\rm co}} =  \g_k$ and 
Corollary~\ref{cor:herm}, which entails 
$\g^j_{\rm co} =  W_{\rm max}^{\g_j} \cup -W_{\rm max}^{\g_j}$ 
for $j =1,\ldots, k$. 
\end{prf}

By the preceding proposition, the structure of 
$\g_{\rm co}$ is rather simple and adapted to the decomposition into 
simple ideals. The subset $\g_{\rm c}$ is slightly more complicated. 
From Corollary~\ref{cor:jac-mor}, we 
obtain the following characterization of elements 
contained in a given invariant cone $W$ in a reductive Lie algebra. 

\begin{prop} \mlabel{prop:1.1} {\rm(Reduction to nilpotent and semisimple elements)} 
Let $\g$ be a reductive Lie algebra and $x \in \g$  
be contained in the invariant cone $W \subeq \g$. 
Write $x = x_0 + x_s + x_n$  with $x_0 \in \fz(\g)$ and 
the Jordan decomposition $x_s + x_n$ of the component of $x$ in $[\g,\g]$. 
Then the following are equivalent: 
\begin{itemize}
\item[\rm(a)] $x \in W$. 
\item[\rm(b)] $x_0 + x_s \in W$ and $x_n \in W$. 
\end{itemize}
\end{prop}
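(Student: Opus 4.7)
The implication (b) $\Rarrow$ (a) is immediate: $W$ is a closed convex cone, hence closed under addition, so $x_0 + x_s \in W$ together with $x_n \in W$ yields $x = (x_0 + x_s) + x_n \in W$. All the substance lies in (a) $\Rarrow$ (b), and my plan is to produce an $\ad$-diagonalizable element $h \in \g$ that commutes with $x_0 + x_s$ while satisfying $[h, x_n] = 2 x_n$, and then extract the two claimed elements as limits of the one-parameter group $e^{t \ad h}$ applied to $x$.

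To build $h$, I would use that for the semisimple element $x_s$ in the reductive Lie algebra $\g$, the centralizer $\fz_\g(x_s)$ is again reductive. Since $[x_s, x_n] = 0$, the nilpotent $x_n$ lies in the semisimple part $[\fz_\g(x_s), \fz_\g(x_s)]$, so I may invoke the Jacobson--Morozov theorem (as already used in the proof of Corollary~\ref{cor:2.6}, cf.\ Corollary~\ref{cor:jac-mor}) inside this reductive subalgebra to obtain $h \in \fz_\g(x_s)$ with $[h, x_n] = 2 x_n$. Because $x_0 \in \fz(\g)$ one automatically has $[h, x_0] = 0$, so altogether $[h, x_0 + x_s] = 0$.

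With $h$ in hand the argument is a routine cone-theoretic limit. Since $h$ commutes with $x_0 + x_s$ and scales $x_n$ by the factor $e^{2t}$, one gets
\[
e^{t \ad h}(x) \;=\; (x_0 + x_s) + e^{2t} x_n \;\in\; W \qquad \text{for all } t \in \R,
\]
using $\Inn(\g)$-invariance of $W$. Letting $t \to -\infty$ and appealing to closedness of $W$ gives $x_0 + x_s \in W$. To recover $x_n$, I use that $W$ is a cone, so that
\[
e^{-2t}\, e^{t \ad h}(x) \;=\; e^{-2t}(x_0 + x_s) + x_n \;\in\; W
\]
for every $t$; letting $t \to +\infty$ and using closedness again yields $x_n \in W$.

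The only genuine point of delicacy is the construction of $h$ commuting with both $x_0$ and $x_s$. This rests on the classical fact that the centralizer of a semisimple element in a reductive Lie algebra is reductive, allowing Jacobson--Morozov to be applied \emph{inside} $\fz_\g(x_s)$ rather than in $\g$ itself. Once $h$ is secured, the remainder is pure limit-taking against a closed invariant cone and involves no further structural input.
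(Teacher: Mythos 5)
Your proof is correct, and it constructs the auxiliary grading element $h$ by a genuinely different route than the paper. The paper goes through its Proposition~\ref{prop:jac-mor} and Corollary~\ref{cor:jac-mor}: there one fixes the nilpotent $x_n$, forms the Jacobson--Morozov parabolic $\fq = \fl\ltimes\fu$ of $x_n$, conjugates $x_s$ into the Levi $\fl = \ker(\ad h)$ of that parabolic, and extracts $h$ with $[h,x_s]=0$ and $[h,x_n]=2x_n$. You instead fix the semisimple part $x_s$, pass to its centralizer $\fz_\g(x_s)$, observe that this is reductive with $x_n$ lying in its derived (semisimple) part, and apply Jacobson--Morozov there to get $h \in \fz_\g(x_s)$. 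Both constructions yield an $h$ commuting with $x_0 + x_s$ and grading $x_n$, and from there the two limits against the closed cone $W$ are identical. For this proposition alone your route is arguably the cleaner one; the paper's detour through the Jacobson--Morozov parabolic is explained by the fact that Proposition~\ref{prop:jac-mor} is also used elsewhere to produce the explicit $\gl_2(\R)$-subalgebra $\fm$, which your construction does not deliver directly. One point you state without justification is that $x_n$ lands in $[\fz_\g(x_s),\fz_\g(x_s)]$ rather than acquiring a component in $\fz(\fz_\g(x_s))$; this does need a short argument (the center of the reductive centralizer acts ad-semisimply on $\g$, so a nonzero central component would contradict the nilpotency of $\ad x_n$), but it is a classical fact and reasonable to invoke.
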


\begin{prf} For $x \in W$,   Corollary~\ref{cor:jac-mor}, applied to the 
semisimple Lie algebra $[\g,\g]$, immediately 
  implies~(b). Conversely, (b) implies $x = x_0 + x_s + x_n \in W + W \subeq W$. 
\end{prf}

\begin{cor} \mlabel{cor:5.6} Let $\g$ be reductive and $x \in \g$. 
We write $x_n \in [\g,\g]$ for its nilpotent Jordan component 
and $x_s := x - x_n$ for its $\ad$-semisimple Jordan component. 
Then $C_x$ is pointed if and only if $\oline{C_{x_n} + C_{x_s}}$ is pointed. 
\end{cor}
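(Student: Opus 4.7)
The plan is to establish mutual inclusions between $C_x$ and $\overline{C_{x_n}+C_{x_s}}$; pointedness then transfers in either direction because every closed convex subcone of a pointed cone is pointed.

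First I would check that $\overline{C_{x_n}+C_{x_s}}$ is a closed convex $\Inn(\g)$-invariant cone containing $x$. By their definitions, $C_{x_n}$ and $C_{x_s}$ are closed convex invariant cones, the Minkowski sum $C_{x_n}+C_{x_s}$ is a convex invariant cone, and taking the closure preserves invariance, convexity and the cone property. Since $x = x_s + x_n$ lies in $C_{x_s}+C_{x_n}$, and $C_x$ is by construction the smallest closed convex invariant cone containing $x$, this yields the inclusion
\[ C_x \subseteq \overline{C_{x_n}+C_{x_s}}. \]
Consequently, if the right-hand side is pointed, so is $C_x$.

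For the converse, suppose $C_x$ is pointed and apply Proposition~\ref{prop:1.1} with the invariant cone $W := C_x$, which trivially contains $x$. After matching the two decompositions --- Proposition~\ref{prop:1.1} writes $x = x_0 + x_s' + x_n$ with $x_0 \in \fz(\g)$, so that its $x_0 + x_s'$ is precisely the full $\ad$-semisimple component $x_s$ used in the present statement --- we obtain $x_s, x_n \in C_x$. Because $C_x$ is $\Inn(\g)$-invariant, closed, convex and stable under positive scaling, it contains the orbits $\cO_{x_s}$ and $\cO_{x_n}$ and hence the closed convex invariant cones $C_{x_s}$ and $C_{x_n}$ they generate. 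Summing and closing yields $\overline{C_{x_n}+C_{x_s}} \subseteq C_x$, and pointedness descends.

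The whole argument is a short inclusion game; the substantive input is Proposition~\ref{prop:1.1}, whose proof rests on the Jacobson--Morozov argument placing both Jordan components of $x$ in any invariant cone containing $x$. I do not anticipate any real obstacle beyond reconciling the two notational conventions for the Jordan decomposition (central part absorbed into $x_s$ here versus split off as a separate summand there).
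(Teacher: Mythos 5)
Your proof is correct and follows essentially the same route as the paper: the forward direction applies Proposition~\ref{prop:1.1} with $W = C_x$ to place both Jordan components in $C_x$, giving $\oline{C_{x_n}+C_{x_s}} \subeq C_x$, and the reverse inclusion comes from $x = x_s + x_n \in \oline{C_{x_n}+C_{x_s}}$ together with minimality of $C_x$. Your explicit reconciliation of the two Jordan-decomposition conventions (the $x_0 + x_s'$ of Proposition~\ref{prop:1.1} versus the $x_s$ of the corollary) is exactly the point the paper leaves implicit, and you handle it correctly.
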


\begin{prf} If $C_x$ is pointed, then 
Proposition~\ref{prop:1.1} implies that 
$x_s, x_n \in C_x$, so that $C_{x_n} + C_{x_s} \subeq C_x$ has 
pointed closure. The converse follows from 
$x = x_n + x_s \in \oline{C_{x_n} + C_{x_s}}$, which implies 
$C_x \subeq \oline{C_{x_n} + C_{x_s}}$. 
\end{prf}

\begin{lem}
  \mlabel{lem:forcharthm} 
Suppose that $\g$ is a reductive Lie algebra 
and $x \in \g$ is such that $C_x$ is pointed. 
Then the following are equivalent: 
\begin{itemize}
\item[\rm(a)] $0 \in \co(x)$. 
\item[\rm(b)] $x$ is a nilpotent element of $[\g,\g]$. 
\end{itemize}
\end{lem}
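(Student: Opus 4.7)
The plan is to treat the two implications separately.

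For (b) $\Rightarrow$ (a), I would invoke the Jacobson--Morozov theorem to embed the nilpotent element $x$ into an $\fsl_2$-triple $(h,x,y) \subseteq [\g,\g]$ with $[h,x] = 2x$. Then $e^{-t\ad h} x = e^{-2t} x \to 0$ as $t \to +\infty$, which already places $0$ in $\overline{\cO_x} \subseteq \co(x)$.

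For (a) $\Rightarrow$ (b), I would first reduce to the case $x \in [\g,\g]$. Writing $x = x_0 + x_s + x_n$ with $x_0 \in \fz(\g)$ as in Proposition~\ref{prop:1.1}, the projection $p_{\fz(\g)} \: \g \to \fz(\g)$ is $\Inn(\g)$-equivariant with $\Inn(\g)$ acting trivially on $\fz(\g)$, so $p_{\fz(\g)}(\co(x))$ reduces to the singleton $\{x_0\}$. The assumption $0 \in \co(x)$ therefore forces $x_0 = 0$, placing $x$ in $[\g,\g]$.

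The core of the argument is to show, under the hypothesis that $C_x$ is pointed, that $x_s \neq 0$ is incompatible with $0 \in \co(x)$. The idea is to separate $0$ from $\co(x)$ by a linear functional. Since $C_x$ is pointed, I pick $\phi \in \g^*$ strictly positive on $C_x \setminus \{0\}$ (take an interior point of the dual cone of $C_x$ inside $\spann(C_x)^*$ and extend arbitrarily to $\g^*$). A compactness argument on the intersection of $C_x$ with the unit sphere in $\spann(C_x)$ yields a constant $\delta > 0$ with $\phi(y) \geq \delta \|y\|$ for all $y \in C_x$. Since $x$ is not nilpotent (as $x_s \neq 0$), the classical Hilbert--Mumford/Kostant instability criterion for the adjoint action of a reductive group yields $0 \notin \overline{\cO_x}$, hence $\delta' := \inf_{y \in \cO_x} \|y\| > 0$. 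For any convex combination $y = \sum_i \lambda_i y_i$ of orbit elements $y_i \in \cO_x$, linearity gives $\phi(y) = \sum_i \lambda_i \phi(y_i) \geq \delta \delta'$, and by continuity the bound $\phi \geq \delta \delta' > 0$ persists on $\co(x)$, contradicting $0 \in \co(x)$.

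The main obstacle is the invocation of the instability criterion: one must justify carefully that for a reductive Lie algebra a non-nilpotent element has an adjoint orbit bounded away from the origin. This is classical but should be cited precisely. The remaining ingredients---extracting the separating functional $\phi$ from pointedness of $C_x$, the $\Inn(\g)$-equivariance of $p_{\fz(\g)}$, and the Jacobson--Morozov rescaling---are all routine.
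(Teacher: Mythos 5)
Your proof is correct, and for the direction (a) $\Rightarrow$ (b) it takes a genuinely different route from the paper. The paper first decomposes $\g$ into its center and simple ideals, reduces to the simple hermitian case, invokes Proposition~\ref{prop:simpleherm}(b) to place $x_n$ in $C_{x_s}$ and hence $\co(x)$ in $\co(x_s) + C_{x_s}$, and then rules out $0 \in \co(x_s)$ via the Borel--Harish-Chandra closedness of semisimple orbits together with Neeb's Cartan-projection convexity result for admissible orbits (the inclusion $p_\ft(\co(x_s)) \subseteq \conv(\cO_{x_s})$). Your argument bypasses both the simple-ideal decomposition and that specific convexity machinery: after killing the central component via the $\Inn(\g)$-equivariance of $p_{\fz(\g)}$ (same as the paper), you combine a linear functional from the interior of the dual cone (available precisely because $C_x$ is pointed) with the instability criterion for reductive adjoint actions to bound $\phi$ below on $\conv(\cO_x)$ by a positive constant. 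This is clean and arguably more elementary, but it substitutes the instability criterion --- the fact that $0 \in \overline{\cO_x}$ if and only if $x$ is nilpotent --- for the paper's use of~\cite[Prop.~VIII.1.25]{Ne00}. As you note, that criterion does need a precise reference; for real reductive groups it follows from the complex case (the complex orbit closure contains $0$ if and only if $x$ is nilpotent), since $\overline{\cO_x^{\R}} \subseteq \overline{\cO_x^{\C}}$, or one can cite the real Hilbert--Mumford theory of Richardson--Slodowy. The (b) $\Rightarrow$ (a) direction is the same Jacobson--Morozov rescaling that underlies the paper's Corollary~\ref{cor:jac-mor}.
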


\begin{prf} (a) $\Rarrow$ (b): We write 
$\g = \fz(\g) \oplus \g^1 \oplus \cdots \g^n,$ 
where the $\g^j$ are simple ideals and accordingly 
\[ x = x_0 + x_1 + \cdots + x_n,\] 
so that $\co(x) = \{x_0\} \times \co(x_1) \times \cdots \times \co(x_n).$ 
Therefore $0 \in \co(x)$ implies that $x_0 = 0$ and that 
$0 \in \co(x_j)$ for every $j$. It follows that 
\[ C_x = \sum_{j= 1}^n C_{x_j},\] 
so that all cones $C_{x_j}\subeq \g^j$ are pointed. They are generating if $x_j\not=0$. 

It therefore suffices to show that, if $\g$ is simple hermitian 
and $0\not=x \in \g$ is such that $\co(x)$ is pointed with $0 \in \co(x)$, 
then $x$ is nilpotent. 
If $x$ is not nilpotent, then $x_s \not=0$.   
Proposition~\ref{prop:simpleherm}(b) shows that 
$x_n \in C_{x_s}$, and thus 
\[ x = x_s + x_n \in x_s + C_{x_s} \subeq C_{x_s} \]  leads to 
$\co(x) \subeq \co(x_s) + C_{x_s}$. 
We claim that $0 \not\in\co(x_s)$. 
The semisimplicity of $x_s$ implies that its orbit $\cO_{x_s}$ 
is closed (Theorem of Borel--Harish-Chandra, \cite[1.3.5.5]{Wa72}), 
and since $C_{x_s}$ is pointed, the orbit 
$\cO_{x_s}$ is admissible 
in the sense of \cite[Def.~VII.3.14]{Ne00}. 
Here we use that $\g \cong \g^*$ as $\Inn(\g)$-modules, 
so that adjoint orbits correspond to coadjoint orbits 
under a linear isomorphism. 
Next we use \cite[Prop.~VIII.1.25]{Ne00} to see that, 
if $p_\ft \: \g \to \ft$ is the projection onto a compactly 
embedded Cartan subalgebra~$\ft$, then 
\[ p_\ft(\co(x_s)) = \co(x_s) \cap \ft \subeq \conv(\cO_{x_s}).\] 
If $\co(x_s)$ contains $0$, it follows that $0\in\conv(\cO_{x_s})$, 
but as $\cO_{x_s}$ is contained in the convex set $C_{x_s} \setminus\{0\}$, 
this is a contradiction. 

\nin (b) $\Rarrow$ (a): If $x$ is nilpotent, then 
$0 \in \oline{\R_+ x} \subeq \oline{\cO_x}$ (Corollary~\ref{cor:jac-mor}) 
implies that $0 \in \co(x)$. 
\end{prf}

\begin{prop} {\rm($\g_{\rm c}$ for reductive admissible Lie algebras)} 
\mlabel{prop:3.14}
Let $\g = \g_k \oplus \g_p$ be a reductive Lie algebra, 
where $\g_k$ is the maximal compact ideal. 
We write elements $x \in \g$ accordingly as 
$x = x_k + x_p$. % with $x_k \in \g_k$ and $x_p \in\g_p$. 
Then the following are equivalent: 
\begin{itemize}
\item[\rm(a)] $C_x$ is pointed. 
\item[\rm(b)] $C_{x_p}$ is pointed and, if $x_p$ is nilpotent and $x_k \not=0$, then 
$x_k \not\in [\g_k,\g_k]$. 
\item[\rm(c)] $\co(x)$ is pointed, and if $0\in \co(x)$, then $x_k = 0$ and 
$x_p$ is nilpotent. 
\end{itemize}
\end{prop}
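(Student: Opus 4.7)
My plan is to prove $(b) \Leftrightarrow (c)$ directly using the product structure of $\co(x)$, and then prove $(a) \Rightarrow (b)$ and $(b) \Rightarrow (a)$ separately. The main underlying fact is that, because $\g_k$ and $\g_p$ are commuting ideals, $\cO_x = \cO_{x_k} \times \cO_{x_p}$ and hence $\co(x) = \co(x_k) \times \co(x_p)$.

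For $(b) \Leftrightarrow (c)$: since $\g_k$ is compact, $\co(x_k)$ is compact and automatically pointed, so $\co(x)$ is pointed iff $\co(x_p)$ is pointed iff $C_{x_p}$ is pointed, by applying Proposition~\ref{prop:simpleherm} to each simple factor of $\g_p$ (noting that on non-hermitian factors both conditions collapse to $x_j = 0$). Writing $x_k = x_k^c + x_k^s$ with $x_k^c \in \fz(\g_k)$ and $x_k^s \in [\g_k, \g_k]$, the orbit is $\cO_{x_k} = x_k^c + \cO_{x_k^s}$, so $0 \in \co(x_k)$ iff $x_k^c = 0$ iff $x_k \in [\g_k, \g_k]$. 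Combining this with Lemma~\ref{lem:forcharthm} applied in the reductive algebra $\g_p$ gives $0 \in \co(x)$ iff $x_k \in [\g_k,\g_k]$ and $x_p$ is nilpotent, from which the second clauses of (b) and (c) are seen to be contrapositive reformulations of the same condition.

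For $(a) \Rightarrow (b)$: since $\co(x) \subeq C_x$, pointedness of $C_x$ forces pointedness of $\co(x)$, and hence of $C_{x_p}$. For the extra clause I argue by contradiction: if $x_p$ is nilpotent and $x_k \in [\g_k, \g_k] \setminus \{0\}$, then by Jacobson--Morozov there is $h_p \in \g_p$ with $[h_p, x_p] = 2 x_p$, and since $\g_k$ commutes with $\g_p$ we get $e^{t \ad h_p}(x) = x_k + e^{2t} x_p$, so $x_k = \lim_{t \to -\infty} e^{t \ad h_p}(x) \in C_x$. But $x_k \neq 0$ lies in compact semisimple $[\g_k, \g_k]$, so invariant averaging yields $0 \in \conv \cO_{x_k}$, hence $-x_k \in C_{x_k} \subeq C_x$, contradicting pointedness.

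For $(b) \Rightarrow (a)$ I consider three cases. If $x_p$ is nilpotent and $x_k = 0$, then $C_x = C_{x_p}$ is pointed by hypothesis. If $x_p$ is nilpotent and $x_k \neq 0$, hypothesis (b) forces $x_k^c \neq 0$; the $\Inn(\g)$-invariant functional $\alpha \in \g^*$ vanishing on $[\g_k,\g_k] \oplus \g_p$ with $\alpha(x_k^c) = 1$ is then constant equal to $1$ on $\cO_x$, so $\alpha \geq 0$ on $C_x$, and for any $v = \lim t_n u_n \in C_x \cap -C_x$ with $u_n \in \conv \cO_x$, the equality $\alpha(v) = 0$ forces $t_n \to 0$, which gives $v_k = 0$ by compactness of the $\g_k$-component of $u_n$, while $v_p = 0$ follows from projection to $\g_p$ and pointedness of $C_{x_p}$. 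If $x_p$ is not nilpotent, Lemma~\ref{lem:forcharthm} yields $0 \notin \co(x_p)$, so $|u_{n,p}| \geq \delta > 0$ for $u_{n,p} \in \conv \cO_{x_p}$; for $v \in C_x \cap -C_x$, projection gives $v_p = 0$, which forces $t_n \to 0$ and hence $v_k = 0$. The main obstacle is the nilpotent case with non-trivial $x_k^c$, where pointedness of $C_{x_p}$ alone only controls the $\g_p$-direction, and the invariant separating functional on $\g_k$ must be combined with projection to handle both components simultaneously.
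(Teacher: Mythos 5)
Your proof is correct, and it follows the paper quite closely in most places: you use the same product decomposition $\co(x) = \co(x_k) \times \co(x_p)$, the compactness of $\co(x_k)$, Lemma~\ref{lem:forcharthm}, and the fixed-point (averaging) projection $p_{\fz(\g_k)}$ for $\Inn(\g_k)$. The genuine divergence is in your treatment of the ``easy'' direction. The paper runs the cycle (a)~$\Rightarrow$~(b)~$\Rightarrow$~(c)~$\Rightarrow$~(a) and for (c)~$\Rightarrow$~(a) invokes Lemma~\ref{lem:conecrit}: once $\lim(\co(x))$ is pointed (which follows from pointedness of $\co(x)$) and $0\notin\co(x)$, the generated cone is automatically pointed, and the remaining case $0\in\co(x)$ collapses to $x=x_p$ nilpotent. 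You instead prove (b)~$\Rightarrow$~(a) directly by a three-way case analysis, using the $\Inn(\g)$-invariant linear functional $\alpha$ killing $[\g_k,\g_k]\oplus\g_p$ in the case where $x_p$ is nilpotent and $x_k^c\ne 0$, and the lower bound $\|u_{n,p}\|\ge\delta$ from $0\notin\co(x_p)$ when $x_p$ is not nilpotent. This is more elementary and avoids appealing to the appendix lemma, at the cost of being longer; the paper's route is shorter because it concentrates the convexity argument in Lemma~\ref{lem:conecrit} once and for all. Both buy the same result.

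One small expository gap in your (a)~$\Rightarrow$~(b): the inference ``$0\in\conv\cO_{x_k}$, hence $-x_k\in C_{x_k}$'' is not a direct implication in general. The correct argument is a half-step longer: for any $f\in C_{x_k}^\star$ one has $f\ge 0$ on $\cO_{x_k}$ and $\int_{G}f(g.x_k)\,dg = f(0)=0$ with $G=\Inn(\g_k)$ compact, so $f$ vanishes on $\cO_{x_k}$ and hence on $C_{x_k}$; thus $C_{x_k}$ is a vector subspace and in particular $-x_k\in C_{x_k}$. (Equivalently, $0$ has no proper supporting hyperplane for $\conv\cO_{x_k}$, hence lies in its relative interior.) This is the same averaging idea the paper uses when it observes that the relative interior of $C_{x_k}$ meets $\fz(\g_k)$, and it closes your gap with no new ideas, so I would call this a presentation issue rather than a flaw.
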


\begin{prf} We have 
  \begin{equation}
    \label{eq:cosum}
 \co(x) = \co(x_k) + \co(x_p),
  \end{equation}
where $\co(x_k)$ is compact. Therefore 
$\co(x)$ is pointed if and only if $\co(x_p)$ is pointed, 
which by Proposition~\ref{prop:simpleherm}(d), applied to the simple ideals 
in $\g_p$, implies that $C_{x_p}$ is pointed. 

\nin (a) $\Rarrow$ (b):  If $C_x$ is pointed, then 
the argument above shows that $C_{x_p}$ is pointed as well. 
If $x_p$ is nilpotent, then $0 \in \co(x_p)$ by 
Lemma~\ref{lem:forcharthm}, so that \eqref{eq:cosum} 
implies that $C_{x_k} \subeq C_x$ is also pointed, 
and this further implies that, if $x_k \not=0$, then $x_k \not\in [\g_k,\g_k]$. 
Here we use that the relative interior of $C_{x_k}$ intersects $\fz(\g_k)$ 
because the projection $p_\fz \: \g_k \to \fz(\g_k)$ is the fixed point 
projection for the compact group $\Inn(\g_k)$. 

\nin (b) $\Rarrow$ (c): Suppose that $C_{x_p}$ is pointed. 
Then 
\[ \co(x) = \co(x_k) \times \co(x_p) \subeq \co(x_k) \times C_{x_p},\] 
and since $\co(x_k)$ is compact, $\lim(\co(x)) \subeq C_{x_p}$ 
is pointed. 
If $0 \in \co(x)$, then $0 \in \co(x_k)$ and $0\in\co(x_p)$. 
As $C_{x_p}$ is pointed, Lemma~\ref{lem:forcharthm} implies 
that $x_p$ is nilpotent. If $x_k \not=0$, then 
$x_k \not\in [\g_k,\g_k]$ by (b), contradicting 
$0 \in \co(x_k) \subeq x_k + [\g_k,\g_k].$ 

\nin (c) $\Rarrow$ (a):  Suppose that $\co(x)$ is pointed and that, if $0 \in \co(x)$, 
then $x = x_p$ is nilpotent. If $0 \not\in \co(x)$, then $C_x$ is pointed by 
Lemma~\ref{lem:conecrit}. If $0\in \co(x)$, then $x = x_p$ is nilpotent, 
so that $\co(x_p) = C_{x_p} = C_x$ is pointed. 
\end{prf}

\begin{prop} {\rm(Extension of invariant cones)} 
 Suppose that $\g$ is reductive and quasihermitian, 
i.e., a direct sum of a compact Lie algebra and hermitian simple ideals. 
If $C_x$ is pointed, then there 
exists a pointed generating invariant cone $W \subeq \g$ 
containing~$x$.
\end{prop}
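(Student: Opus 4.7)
The plan is to build a closed convex $\cW_\fk$-invariant cone $C\subseteq\ft$ in a compactly embedded Cartan subalgebra $\ft=\ft_k\oplus\ft_p$ satisfying $C_{\rm min}\subseteq C\subseteq C_{\rm max}$ and $p_\ft(\cO_x)\subseteq C$, and then to take $W:=\{y\in\g\:p_\ft(\cO_y)\subseteq C\}$ by the same construction as for $W_{\rm min}$ and $W_{\rm max}$ in the excerpt; the analogous properties then give a pointed generating closed convex invariant cone $W\subseteq\g$ with $W\cap\ft=C$ and $x\in W$. Starting from the decomposition $\g=\g_k\oplus\g_p$ with $\g_k$ the maximal compact ideal and $\g_p=\bigoplus_j\g^j$ a sum of simple hermitian ideals, write $x=x_k+x_p$ and use Corollary~\ref{cor:herm} together with Proposition~\ref{prop:3.10} to choose the adapted positive system $\Delta_p^+$ in each $\g^j$ so that $x_p\in W_p:=\sum_jW_{\rm max}^{\g^j}$; this places $p_\ft(\cO_x)$ inside $\ft_k\oplus(C_{\rm max}\cap\ft_p)=C_{\rm max}$.

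The cone $C$ is defined with two ``positivity buffers'' that together match both cases of Proposition~\ref{prop:3.14}(b). Let $\tilde z_p\in\fz(\fk_p)$ be the sum of the characteristic elements of the simple hermitian ideals, and let $\phi\in\ft_p^*$ be the dual $\cW_{\fk_p}$-invariant linear functional; pointedness of $C_{\rm max}\cap\ft_p$ combined with Weyl-averaging ensures $\phi>0$ on $(C_{\rm max}\cap\ft_p)\setminus\{0\}$. Let $W_z\subseteq\fz(\g_k)$ be a pointed generating closed convex cone with $x_k^0:=p_{\fz(\g_k)}(x_k)$ in its interior (when $x_k^0\neq0$), and let $\psi\in\fz(\g_k)^*$ be nonnegative on $W_z$ and strictly positive on its interior. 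Fix a $\cW_\fk$-invariant norm $\|\cdot\|_k$ on $\ft_k$. For parameters $c,\tilde c>0$ set
\[
  C := \bigl\{(y_k,y_p)\in\ft\:y_p\in C_{\rm max}\cap\ft_p,\ p_{\fz(\g_k)}(y_k)\in W_z,\ \|y_k-p_{\fz(\g_k)}(y_k)\|_k\leq c\,\phi(y_p)+\tilde c\,\psi(p_{\fz(\g_k)}(y_k))\bigr\}.
\]
Then $C$ is a closed convex $\cW_\fk$-invariant cone with $C_{\rm min}\subseteq C\subseteq C_{\rm max}$; pointedness follows by successive reduction ($(y_k,y_p)\in C\cap-C$ gives $y_p=0$ by pointedness of $C_{\rm max}\cap\ft_p$, then $p_{\fz(\g_k)}(y_k)=0$ by pointedness of $W_z$, then $\|y_k\|_k=0$ by the coupled inequality), and generating follows because $C$ contains $C_{\rm min}$ (spanning $\ft_p$), $W_z$ (spanning $\fz(\g_k)$), and elements $(\pm y_k^s,\lambda\tilde z_p)$ for $\lambda$ large enough and any $y_k^s\in[\g_k,\g_k]\cap\ft_k$, whose differences span $[\g_k,\g_k]\cap\ft_k$.

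The main obstacle is verifying $p_\ft(\cO_x)\subseteq C$, and for this the case analysis in Proposition~\ref{prop:3.14}(b) is decisive. For $y\in\cO_x$, Kostant's convexity theorem gives $p_{\fz(\g_k)}(p_{\ft_k}(y_k))=x_k^0$ and $\|p_{\ft_k}(y_k)-x_k^0\|_k\leq\|\tilde x_k^s\|_k$, where $\tilde x_k^s$ denotes the semisimple part of a $\ft_k$-representative of $x_k$; the projection formula \cite[Prop.~VIII.1.25]{Ne00} combined with linearity of $p_{\ft_p}$ gives $\phi(p_{\ft_p}(y_p))\geq\phi(\tilde x_{p,s})$ whenever $x_p$ has a nonzero semisimple part. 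If $x_p$ is not nilpotent then $\phi(\tilde x_{p,s})>0$ and we take $c\geq\|\tilde x_k^s\|_k/\phi(\tilde x_{p,s})$; if $x_p$ is nilpotent, Proposition~\ref{prop:3.14}(b) forces either $\tilde x_k^s=0$ (no constraint needed) or $x_k\notin[\g_k,\g_k]$, whence $x_k^0\neq0$, $\psi(x_k^0)>0$, and we take $\tilde c\geq\|\tilde x_k^s\|_k/\psi(x_k^0)$. At least one of the two buffers is thus strictly positive whenever $\tilde x_k^s\neq0$, completing the construction.
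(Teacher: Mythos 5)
Your proof takes a genuinely different route from the paper's. The paper's proof is a short decomposition argument: write $\g = \g(x) \oplus \g_1$ with $\g(x) := C_x - C_x$ the ideal generated by $x$ and $\g_1$ a complementary ideal. If $\g_1$ carries a pointed generating invariant cone $W_1$, then $C_x + W_1$ works. Otherwise $\g_1$ is compact semisimple, and the paper sets $W := \cone(B_x + B_1)$, where $B_x = f^{-1}(1)\cap C_x$ is a compact base of $C_x$ ($f$ in the interior of $C_x^\star$) and $B_1 \subeq \g_1$ is a compact invariant $0$-neighborhood. This is a one-paragraph elementary argument that never touches the Cartan-subalgebra machinery.

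Your proof instead builds an explicit $\cW_\fk$-invariant cone $C$ in $\ft$ between $C_{\rm min}$ and $C_{\rm max}$ containing $p_\ft(\cO_x)$, and appeals to the reconstruction theorem \cite[Thm.~VIII.3.21]{Ne00} to produce $W$. This works (the verifications you sketch can all be completed), but it imports substantially heavier machinery: the linear convexity theorem for admissible semisimple orbits is needed to justify $\phi(p_{\ft_p}(y_p)) \geq \phi(\tilde x_{p,s})$ — your cited \cite[Prop.~VIII.1.25]{Ne00} is not quite the statement used; one needs $p_\ft(\conv(\cO_{x_{p,s}})) \subeq \conv(\cW\cdot\tilde x_{p,s}) + C_{\rm min}$ on which $\phi$ is $\geq \phi(\tilde x_{p,s})$ — and the reconstruction theorem itself presupposes exactly the sort of structure theory the paper's proof avoids. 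Also, to conclude $x \in W$ from $p_\ft(\cO_x)\subeq C$, you should note explicitly that $W' := \{y : p_\ft(\cO_y)\subeq C\}$ is pointed (its edge is an ideal meeting $\ft$ trivially, hence zero in a reductive $\g$) and therefore coincides with the $W$ furnished by the reconstruction theorem. What your approach buys is an explicit description of $W\cap\ft$ and a uniform construction; what the paper's buys is brevity and a clean reduction to the only obstruction, namely a compact semisimple complementary ideal. Both are valid, but for this particular statement the decomposition argument is considerably more economical.
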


\begin{prf} Let $\g(x) = C_x - C_x \trile \g$ be the ideal
generated by $x$. As $\g$ is reductive, 
$\g= \g(x) \oplus \g_1$, where $\g_1 \trile \g$ is a complementary 
ideal. If $\g_1$ itself contains a pointed generating invariant cone 
$W_1$, then $C_x + W_1$ is a pointed generating invariant cone in $\g$. 
As $\g_1$ also is quasihermitian, it contains no pointed generating 
invariant cone if and only if it is compact semisimple 
(Proposition~\ref{prop:3.14}). Then  we consider a product 
$B_x \times B_1 \subeq \g$, where $B_x \subeq C_x$ is a compact base 
of the pointed cone $C_x$, i.e., of the form $f^{-1}(1) \cap C_x$ for 
$f$ in the interior of the dual cone $C_x^\star$, 
and $B_1 \subeq \g_1$ is a compact invariant 
$0$-neighborhood. Then $W := \cone(B_x + B_1)$ is a pointed 
generating invariant cone in $\g$ containing $C_x = \cone(B_x)$. 
\end{prf}

We shall see below that the preceding proposition does not 
extend to non-reductive admissible Lie algebras 
(Example~\ref{ex:counterex}).

\subsection{The Characterization Theorem} 
\mlabel{subsec:3.3}

We now turn to the non-reductive admissible Lie algebras 
$\g = \g(\fl,V,\fz,\beta)$. 
With the reduction \eqref{eq:g1g0} in mind, we 
 may {\bf assume that $\fl$ acts faithfully on~$V$.}
We start with a crucial lemma. 

For a root $\alpha \in \Delta_r$ with root space 
\[ \g_\C^\alpha = 
V_\C^\alpha =  \{ w \in V_\C \: (\forall x \in \ft)\ [x,w] = \alpha(x)w\} \] 
(see \cite[Prop.~VII.2.5]{Ne00} for $V_\C^\alpha = \g_\C^\alpha$), we consider the closed convex cone 
\[  C_\alpha := \cone(\{  i[v_\alpha, v_\alpha^*] \: 
 v_\alpha \in V_\C^\alpha\})\subeq \fz,\] 
As $z \mapsto z^*$ exchanges $V_\C^\alpha$ and $V_\C^{-\alpha}$, we have 
\begin{equation}
  \label{eq:calpharel}
 C_{-\alpha} = C_\alpha.
\end{equation}

\begin{lem}
  \mlabel{lem:Wl}
For any pointed closed convex cone 
$C_\fz \subeq \fz$, 
\[ W_\fl := W_\fl(C_\fz) := \{x \in \fl \: (\forall v \in V)\ 
[v,[v,x]] \in C_\fz\} \]
is a pointed invariant closed convex cone in $\fl$. 
\end{lem}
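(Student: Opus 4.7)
My plan is to verify the three required properties of $W_\fl$ in turn.

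For the fact that $W_\fl$ is a closed convex cone, I observe that for each fixed $v \in V$ the map $L_v \colon \fl \to \fz$, $x \mapsto [v,[v,x]]$, is linear. Therefore $L_v^{-1}(C_\fz)$ is a closed convex cone, and $W_\fl = \bigcap_{v \in V} L_v^{-1}(C_\fz)$ inherits closedness, convexity and stability under positive scalars. For invariance under $\Inn_\g(\fl)$: given $y \in \fl$, the inner automorphism $\phi := e^{\ad y}$ of $\g$ preserves the Spindler decomposition $\fz \oplus V \oplus \fl$ and fixes $\fz = \fz(\g)$ pointwise. Hence, for $x \in W_\fl$ and $v \in V$,
\[ [v,[v,\phi(x)]] = \phi\bigl([\phi^{-1}v,[\phi^{-1}v,x]]\bigr) = [\phi^{-1}v,[\phi^{-1}v,x]] \in C_\fz, \]
since $\phi^{-1}v \in V$ and $\phi$ is the identity on $\fz$.

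The substantive step is pointedness. I would start from $x \in W_\fl \cap -W_\fl$ and use that $C_\fz$ is pointed to conclude $[v,[v,x]] = 0$ for every $v \in V$. The Spindler bracket \eqref{eq:VII.2.1} gives $[w,x] = -x.w \in V$ for $w \in V$, $x \in \fl$, and then $[v,-x.w] = -\beta(v,x.w)$, so the condition rewrites as $\beta(x.v,v) = 0$ for all $v \in V$. Polarizing this quadratic identity and using skew-symmetry of $\beta$ yields
\[ \beta(x.v,w) = \beta(v,x.w) \qquad \text{for all } v,w \in V. \]
At the same time, $\beta$ is $\fl$-invariant (Theorem~\ref{thm:spind}) and $\fl$ acts trivially on $\fz = \fz(\g)$, so
\[ \beta(x.v,w) + \beta(v,x.w) = 0. \]
Combining these two identities forces $\beta(v,x.w) = 0$ for all $v,w \in V$. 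The non-degeneracy of $\beta$ from Corollary~\ref{cor:brack-nondeg} then gives $x.w = 0$ for every $w \in V$, and the standing assumption that $\fl$ acts faithfully on $V$ finally yields $x = 0$.

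The main obstacle is the pointedness argument: one must play the polarization identity off against the $\fl$-invariance of $\beta$, and both the hypothesis $\fz = \fz(\g)$ (so that $\fl$ acts trivially on the values of $\beta$) and the faithfulness of the $\fl$-action on $V$ (reduction at the start of Subsection~\ref{subsec:3.3}) are essential. Without faithfulness $W_\fl$ would contain the kernel of the representation $\fl \to \gl(V)$, and without the centrality of $\fz$ the crucial sign-reversal in the second displayed identity would be lost.
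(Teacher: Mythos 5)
Your proof is correct and follows essentially the same route as the paper: polarize the vanishing of $v\mapsto [v,[v,x]]$, invoke the structural identities to conclude $\beta(v,x.w)=0$ for all $v,w$, then use the non-degeneracy of $\beta$ (Corollary~\ref{cor:brack-nondeg}) and faithfulness of the $\fl$-action to get $x=0$. The only cosmetic difference is that the paper packages the two ingredients you use separately --- skew-symmetry and $\fl$-invariance of $\beta$ --- into a single Jacobi-identity computation showing that $(v,w)\mapsto [v,[w,x]]$ is symmetric, whereas you play the two identities off against each other; the content is identical, and your more explicit verification of closedness, convexity, and $\Inn_\g(\fl)$-invariance is a harmless elaboration of what the paper dispatches in one line.
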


\begin{prf} That $W_\fl$ 
is a closed convex invariant cone in $\fl$ follows from the 
$\fl$-invariance of the bracket $\beta \: V \times V \to \fz$. 
To see that $W_\fl$ is pointed, let $y \in W_\fl \cap - W_\fl$. 
Then $[v,[v,y]] = 0$ for every $v \in V$. 
As the map $V \times V \to \fz, (v,w) \mapsto [v,[w,y]]$ 
is symmetric by 
\[ [w,[v,y]] = [[w,v],y] - [[w,y],v] = [v,[w,y]],\] 
polarization 
shows that $[V,[V,y]] = \{0\}$. 
Next Corollary~\ref{cor:brack-nondeg} 
entails $[V,y] = \{0\}$, so that 
$y =0$  because  the representation 
of $\fl$ on $V$ is injective. This shows that $W_\fl$ is pointed. 
\end{prf}

\begin{lem} \mlabel{lem:czxt} For $x \in \ft$, we have 
$C_{x,\fz} = \oline{\sum_{i\alpha(x)>0} C_\alpha}.$
\end{lem}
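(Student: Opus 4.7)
My approach is to compute $[y,[y,x]]$ for $y \in V$ and $x \in \ft$ directly via the root space decomposition $V_\C = \bigoplus_{\alpha \in \Delta_r} V_\C^\alpha$. Writing $y = \sum_\alpha y_\alpha$ with $y_\alpha \in V_\C^\alpha$, the fact that $V$ consists of the $*$-antifixed elements of $V_\C$ (since $z = a + ib \mapsto z^* = -a + ib$ acts as $-\id$ on $\g$) combined with $(V_\C^\alpha)^* = V_\C^{-\alpha}$ forces the reality relation $y_{-\alpha} = -y_\alpha^*$. Using $[x, y_\alpha] = \alpha(x) y_\alpha$ together with $[V_\C, V_\C] \subeq \fz_\C$ (so that in the iterated bracket only the cross terms with $\alpha + \beta = 0$ survive), one obtains
\[ [y, [y,x]] \;=\; -\sum_{\alpha \in \Delta_r} \alpha(x)\,[y_{-\alpha}, y_\alpha] \;=\; -\sum_{\alpha \in \Delta_r} \alpha(x)\,[y_\alpha, y_\alpha^*] \;=\; \sum_{\alpha \in \Delta_r} i\alpha(x)\, u_\alpha,\]
where $u_\alpha := i[y_\alpha, y_\alpha^*] \in C_\alpha$ and each $i\alpha(x) \in \R$ because $\ft$ is compactly embedded (so $\alpha(\ft) \subeq i\R$).

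For the inclusion ``$\subeq$'' I would split the sum according to the sign of $i\alpha(x)$: terms with $i\alpha(x) > 0$ lie in $C_\alpha$; terms with $i\alpha(x) = 0$ vanish; and terms with $i\alpha(x) < 0$ can be rewritten as $(-i\alpha(x))(-u_\alpha)$, where $-u_\alpha = i[y_\alpha^*, y_\alpha]$ is one of the generators of $C_{-\alpha}$ (via the element $w := y_\alpha^* \in V_\C^{-\alpha}$), and the index $-\alpha$ now satisfies $i(-\alpha)(x) > 0$. Hence every term lies in some $C_\beta$ with $i\beta(x) > 0$, and therefore $[y, [y,x]] \in \sum_{i\beta(x) > 0} C_\beta$. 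Taking closed convex cones gives $C_{x,\fz} \subeq \oline{\sum_{i\alpha(x) > 0} C_\alpha}$.

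For ``$\supeq$'', since $C_{x,\fz}$ is a closed convex cone, it suffices to show that each generator $i[v_\alpha, v_\alpha^*]$ of $C_\alpha$ with $i\alpha(x) > 0$ lies in $C_{x,\fz}$. I would take as test element $y := v_\alpha - v_\alpha^* \in V$ (the minus sign enforces $y^* = -y$, so $y$ is genuinely in $V$). Only the components $y_\alpha = v_\alpha$ and $y_{-\alpha} = -v_\alpha^*$ are non-zero, and a brief computation using $[v_\alpha, v_\alpha] = [v_\alpha^*, v_\alpha^*] = 0$ gives
\[ [y, [y,x]] \;=\; -2\alpha(x)\,[v_\alpha, v_\alpha^*] \;=\; 2i\alpha(x) \cdot i[v_\alpha, v_\alpha^*],\]
a strictly positive multiple of the desired generator, which therefore lies in $C_{x,\fz}$.

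The only delicate point, and the main obstacle, is the bookkeeping of signs. Because $*$ acts as $-\id$ on $\g$, the reality condition for $y \in V$ is $y_{-\alpha} = -y_\alpha^*$ (not $+y_\alpha^*$), and it is precisely this sign that makes $y := v_\alpha - v_\alpha^*$ (rather than $v_\alpha + v_\alpha^*$, which lies in $iV$) the correct test element producing a \emph{positive} multiple of the generator in the reverse inclusion.
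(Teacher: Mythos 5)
Your proof is correct and follows essentially the same route as the paper: compute $[y,[y,x]]$ for $y \in V$ via the root space decomposition $V_\C = \bigoplus_{\alpha \in \Delta_r} V_\C^\alpha$, use that $[V_\C,V_\C] \subeq \fz_\C$ so only the $\alpha + \beta = 0$ cross terms survive, and then split by the sign of $i\alpha(x)$. The paper sums over a positive system $\Delta_r^+$ and writes $v = \sum_{\alpha \in \Delta_r^+}(v_\alpha - v_\alpha^*)$, which is exactly your reality constraint $y_{-\alpha} = -y_\alpha^*$ in disguise, so the two parametrizations are equivalent. One small bonus of your version: for the terms with $i\alpha(x) < 0$ you derive directly that $-u_\alpha = i[y_\alpha^*, y_\alpha]$ is a generator of $C_{-\alpha}$, rather than invoking the identity \eqref{eq:calpharel} as the paper does; this is slightly more robust, since the paper's subsequent manipulation ``$-i\alpha(x)C_{-\alpha} = i\alpha(x)C_\alpha$'' actually uses the relation in the form $C_{-\alpha} = -C_\alpha$, which is what your direct computation produces.
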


\begin{prf}
We write $v \in V$ as 
$v= \sum_{\alpha \in \Delta_r^+} v_\alpha - v_\alpha^*$ with 
$v_\alpha \in V_\C^\alpha,$ 
where $\Delta_r^+ \subeq \Delta_r$ is any positive system. 
We then find for $x \in \ft$: 
\[  [v,x] = \sum_{\alpha \in \Delta_r^+} -\alpha(x) (v_\alpha + v_\alpha^*)\] 
and thus 
\begin{align*}
 [v,[v,x]] 
&= \sum_{\beta,\alpha \in \Delta_r^+} -\alpha(x) [v_\beta - v_\beta^*, v_\alpha + v_\alpha^*] 
= \sum_{\alpha \in \Delta_r^+} -\alpha(x) 
([v_\alpha, v_\alpha^*] - [v_\alpha^*, v_\alpha]) \\
&
= \sum_{\alpha \in \Delta_r^+} -2\alpha(x) [v_\alpha, v_\alpha^*] 
= \sum_{\alpha \in \Delta_r^+} 2i\alpha(x) i[v_\alpha, v_\alpha^*].
\end{align*}
This shows that 
$i\alpha(x) > 0$ implies that $C_\alpha \subeq C_{x,\fz}$. 
As $-i\alpha(x) C_{-\alpha} = i\alpha(x) C_\alpha$ by 
\eqref{eq:calpharel}, the cone 
$C_{x,\fz}$ is generated by the cones $C_\alpha$ for $i\alpha(x) > 0$, 
$\alpha \in \Delta_r$. 
\end{prf}

\begin{prop} \mlabel{prop:wl-gen}
Let $\Delta_r^+ \subeq \Delta_r$ be a positive system 
and 
\[ C_\fz \supeq C_{\rm min,\fz} 
:= \oline{\sum_{\alpha \in \Delta_r^+} C_\alpha} 
=  \cone(\{  i[v_\alpha, v_\alpha^*] \: 
\alpha \in \Delta_r^+, v_\alpha \in V_\C^\alpha\})\] 
be a pointed closed convex cone in $\fz$. 
Then 
\[ W_\fl := W_\fl(C_\fz) := \{x \in \fl \: (\forall v \in V)\ 
[v,[v,x]] \in C_\fz\} \]
is a pointed generating closed convex invariant cone in $\fl$ with 
\begin{equation}
  \label{eq:wlt}
W_\fl \cap \ft_\fl = (i\Delta_r^+)^\star.
\end{equation}
\end{prop}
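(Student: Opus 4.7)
The plan is to verify three assertions in turn: (i) $W_\fl$ is a pointed closed convex invariant cone, (ii) $W_\fl \cap \ft_\fl = (i\Delta_r^+)^\star$, and (iii) $W_\fl$ is generating in $\fl$. Part (i) is handed to me directly by Lemma~\ref{lem:Wl}, so no new work is required.

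For (ii), I start from the observation that $x \in W_\fl$ is equivalent to $C_{x,\fz}\subeq C_\fz$, and that, for $x \in \ft_\fl \subeq \ft$, Lemma~\ref{lem:czxt} identifies $C_{x,\fz}$ with $\oline{\sum_{i\alpha(x)>0} C_\alpha}$. The inclusion $(i\Delta_r^+)^\star \subeq W_\fl \cap \ft_\fl$ is then immediate: when $i\alpha(x) \geq 0$ for every $\alpha\in\Delta_r^+$, the strict inequality $i\alpha(x)>0$ can hold only for $\alpha\in\Delta_r^+$ (otherwise $-\alpha\in\Delta_r^+$ would give $i\alpha(x)\leq 0$), and hence $C_{x,\fz}\subeq C_{\min,\fz}\subeq C_\fz$.

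The reverse inclusion is the main obstacle and is the point at which the full admissibility hypothesis is used. I would argue by contradiction: suppose $x \in W_\fl\cap\ft_\fl$ but some $\alpha\in\Delta_r^+$ satisfies $i\alpha(x)<0$. Plugging $v = v_\alpha - v_\alpha^*$ with $v_\alpha\in V_\C^\alpha$ into the computation in the proof of Lemma~\ref{lem:czxt} yields $[v,[v,x]] = 2i\alpha(x)\cdot i[v_\alpha,v_\alpha^*]$, a nonpositive real multiple of an element of $C_\alpha$; letting $v_\alpha$ vary gives $-C_\alpha \subeq C_{x,\fz}\subeq C_\fz$. Combined with $C_\alpha \subeq C_{\min,\fz}\subeq C_\fz$, the pointedness of $C_\fz$ forces $C_\alpha\subeq H(C_\fz)=\{0\}$, so $[v_\alpha,v_\alpha^*]=0$ for every $v_\alpha\in V_\C^\alpha$, and polarization gives $[V_\C^\alpha, V_\C^{-\alpha}]=\{0\}$. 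To close the argument, I invoke Theorem~\ref{thm:spind}(e) to produce $f\in\fz^*$ with $\omega := f\circ\beta$ symplectic; the $\ft$-invariance of $\omega$ forces the weight-space decomposition of $V_\C$ to be $\omega$-orthogonal except between $V_\C^\alpha$ and $V_\C^{-\alpha}$, so non-degeneracy of $\omega$ together with $V_\C^\alpha\ne\{0\}$ forces $[V_\C^\alpha, V_\C^{-\alpha}]\ne\{0\}$, the desired contradiction.

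For (iii), I first observe that $\Delta_r$ spans $\ft_\fl^*$: any $x\in\ft_\fl$ with $\alpha(x)=0$ for all $\alpha\in\Delta_r$ would centralize every weight space of $V_\C$, hence all of $V$, so $x=0$ by the assumed faithfulness of the $\fl$-action. Next, writing the positive system $\Delta^+$ in the form $\{\alpha : \alpha(x_0)>0\}$ with $x_0 \in i\ft$ and noting that $\alpha$ vanishes on $\fz$, the $\ft_\fl$-component $t_0$ of $-ix_0 \in \ft$ satisfies $i\alpha(t_0)>0$ for all $\alpha\in\Delta_r^+$ and thus lies in the interior of $(i\Delta_r^+)^\star$, which is therefore generating in $\ft_\fl$. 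Consequently $\Spann(W_\fl)$ is an $\Inn(\fl)$-invariant subspace of $\fl$ containing the Cartan subalgebra $\ft_\fl$; a short argument via the decomposition of $[\fl,\fl]$ into simple ideals and their Cartan subalgebras shows that such an ideal of a reductive Lie algebra must equal $\fl$, completing the proof.
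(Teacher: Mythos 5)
Your proof is correct and follows the same overall structure as the paper's: pointedness and convexity from Lemma~\ref{lem:Wl}, the Cartan computation via Lemma~\ref{lem:czxt}, and the generating property via the interior of $(i\Delta_r^+)^\star$ in $\ft_\fl$. Two points are worth noting. First, you supply a couple of details the paper leaves tacit: that $C_\alpha\neq\{0\}$ (needed to get the contradiction in the reverse inclusion of \eqref{eq:wlt}; the paper's phrasing ``$C_\alpha\subeq -C_{\rm min,\fz}$'' is really shorthand for ``$-C_\alpha\subeq C_{x,\fz}\subeq C_\fz$, contradicting pointedness since also $C_\alpha\subeq C_\fz$''), and that $\Delta_r$ spans $\ft_\fl^*$ (needed for the interior point; the paper asserts this without comment, but it does follow exactly as you say from $V=[\ft_\fl,V]$ and faithfulness). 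Second, your route to generation differs: the paper uses the fact that $\Phi(x,y)=e^{\ad x}y$ has surjective differential at a regular $y\in\ft_\fl$ to conclude directly that $\Inn(\fl)(W_\fl\cap\ft_\fl)$ has interior in $\fl$, whereas you observe that $\Spann(W_\fl)$ is an $\Inn(\fl)$-invariant subspace, hence an ideal, containing $\ft_\fl$, and that any ideal of the reductive algebra $\fl$ containing a Cartan subalgebra is all of $\fl$. Both are standard and correct; the paper's version gives the open-orbit statement explicitly, while yours is perhaps more structural and avoids differentials entirely.
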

By \cite[Thm.~VII.2.7]{Ne00}, any 
$x \in \ft_\fl$ for which some Hamiltonian function 
$H_x^f$, $f \in C_\fz^\star$,
 is positive definite is contained in the interior of $W_\fl$

\begin{prf} In view of Lemma~\ref{lem:Wl}, it remains to show that 
$W_\fl$ is generating. First we observe that 
\[ W_\fl = \{x \in \fl \: C_{x,\fz} \subeq C_\fz\}. \]
If $x \in \ft_\fl$ is such that 
$i\alpha(x) > 0$ for all $\alpha \in \Delta_r^+$, then 
$C_{x,\fz} = C_{\rm min,\fz}$ by Lemma~\ref{lem:czxt}. Therefore 
\[ W_\fl \cap \ft_\fl \supeq (i\Delta_r^+)^\star,  \] 
and this cone has inner points. This implies that $W_\fl$ is generating 
because $\Inn(\fl)(W_\fl\cap \ft_\fl)$ contains inner points. 
This in turn follows 
from the fact that the differential of the map 
\[ \Phi \: \fl \times \ft_\fl \to \fl,\quad 
\Phi(x)(y) := e^{\ad x}y \] 
is surjective if $y \in \ft_\fl$ is regular.

If $x \in \ft_\fl$ is such that $i\alpha(x) < 0$ for some 
$\alpha \in \Delta_r^+$, then 
$C_\alpha \subeq - C_{\rm min,\fz} \subeq - C_\fz$ 
and the pointedness of $C_\fz$ thus shows that 
$C_{x,\fz} \not\subeq C_\fz$. 
%$[v_\alpha, v_\alpha^*]\not=0$ for $0 \not=v_\alpha \in V_\C^\alpha$ 
%(\cite[Thm.~VII.3.10(iv)]{Ne00}) implies that 
%\[ \{0\} \not= C_{x,\fz} \cap -C_{\rm min} \subeq C_{x,\fz} \cap -C_\fz.\] 
We conclude in particular that $x \not\in W_\fl$, so that 
$W_\fl \cap \ft \subeq (i\Delta_r^+)^\star$, and thus~\eqref{eq:wlt} follows. 
\end{prf}

\begin{rem} \mlabel{rem:conewcz} 
In the context of the preceding proposition, we 
assume, in addition, that $\Delta_p^+$ is adapted and 
consider the closed convex cone 
\[ W := \{ x \in \g \: \cO_x \subeq C_\fz + V + W_\fl\}.\] 
Then the pointedness of $C_{\rm min,\fz}$ implies the pointedness 
of $C_{\rm min}$ and hence that $C_{\rm min} \subeq C_{\rm max}$ 
(\cite[Thm.~VIII.2.12]{Ne00}). 
As 
\[ \Inn(\g) W_\fl  = e^{\ad V} W_\fl \subeq C_\fz + V + W_\fl\] 
follows from \eqref{eq:Vconj}, we have 
$C_\fz + W_\fl \subeq  W.$ 
For 
\[ \fz_W := C_\fz - C_\fz \supeq C_{\rm min,\fz} - C_{\rm min,\fz} = [V,V],\] 
the cone $W$ is clearly contained in the ideal 
$\g_W := \g(\fl,V,\fz_W,\beta)$. 
Further,
\[ C_\fz + (W_\fl \cap \ft_\fl) \subeq W \cap \ft \subeq C_\fz + (W_\fl \cap \ft_\fl) \] 
implies that 
\[ W \cap \ft = C_\fz + (W_\fl \cap \ft_\fl),\] 
and this cone is generating in $\ft_W := \ft \cap \g_W$, 
which in turn implies that $W$ is generating in $\g_W$. 
By construction, $H(W) \subeq V$ is trivial by 
Lemma~\ref{lem:abideal}(b), so that $W$ is pointed. 
\end{rem}

\begin{thm} \mlabel{thm:2.12} {\rm(Characterization Theorem)} 
Let $\g = \g(\fl,V,\fz,\beta)$ be admissible 
such that the representation 
of $\fl$ on $V$ is faithful. For $x = x_\fz + x_V + x_\fl \in \g$ we 
consider the $\fz$-valued Hamiltonian function 
\[ H_x^\fz \:  V \to \fz,\quad 
H_x^\fz(v) := p_\fz(e^{\ad v}x) = x_\fz + [v,x_V] +  \frac{1}{2} [v, [v,x_\fl]] \] 
and put 
\[  \coz(x) := \oline{\conv}(H_x^\fz(V)). \] 
Then 
    \begin{itemize}
    \item[\rm(a)] $\co(x)$ is pointed if and only if $\coz(x)$ is pointed, i.e., 
$\g_{\rm co} = \{ x \in \g \: \co_\fz(x)\ \mbox{\rm pointed} \}.$
    \item[\rm(b)] $C_x$ is pointed if and only if 
$\coz(x)$ is pointed and, if $x_\fl$ is nilpotent, 
then $\coz(x)$ is contained in a pointed cone. 
    \end{itemize}
\end{thm}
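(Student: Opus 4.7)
The preliminary observation on which both parts hinge is that $\co_\fz(x)$ is constant on $\Inn(\g)$-orbits: using $[V,V]\subseteq\fz$ one gets $e^{\ad v}e^{\ad w}=e^{\ad(v+w)}$, and combined with the $\fl$-invariance of $\beta$ (which forces $e^{\ad \ell}$ to be an isometry of $\beta$) a direct computation shows $H^\fz_{g\cdot x}(V)=H^\fz_x(V)$ for every $g\in\Inn(\g)$. As a by-product, $p_\fz(\cO_x)=H^\fz_x(V)$, so by continuity and convexity of $p_\fz$, $p_\fz(\co(x))\subseteq\co_\fz(x)$.

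For (a), direction $\Rightarrow$: assume $\co(x)$ is pointed. By the Reduction Theorem \ref{thm:conj1} we may replace $x$ by a conjugate with $x_V=0$ (which does not change $\co_\fz(x)$). Then $H^\fz_x(v)=x_\fz+\tfrac12[v,[v,x_\fl]]$, and the substitution $v\mapsto tv$ shows $\co_\fz(x)=x_\fz+\tfrac12 C_{x,\fz}$. By Lemma \ref{lem:centlimcon}, $C_{x,\fz}\subseteq\lim(\co(x))$; since pointedness of $\co(x)$ is pointedness of $\lim(\co(x))$, $C_{x,\fz}$ and hence $\co_\fz(x)$ is pointed. For direction $\Leftarrow$: the edge $H(\co(x))$ is $\Inn(\g)$-invariant, hence an ideal of $\g$. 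For $y\in H(\co(x))$, the identity $\co(x)+\R y=\co(x)$ yields $p_\fz(y)\in H(p_\fz(\co(x)))\subseteq H(\co_\fz(x))=\{0\}$, so $H(\co(x))\subseteq V+\fl$. Writing $y=y_V+y_\fl$, brackets with $V$ yield $[V,y_V]\subseteq\fz\cap H(\co(x))=0$, and non-degeneracy of $\beta$ (Corollary \ref{cor:brack-nondeg}) then gives $y_V=0$, while faithfulness of $\fl$ on $V$ applied to $[V,y_\fl]\subseteq V\cap H(\co(x))=0$ (using Lemma \ref{lem:abideal}(b) to rule out ideals in $V$) forces $y_\fl=0$.

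For (b), direction $\Rightarrow$: pointedness of $C_x$ implies pointedness of $\co(x)\subseteq C_x$ and hence of $\co_\fz(x)$ by (a). When $x_\fl$ is nilpotent, the reduction of Corollary \ref{cor:1.7} plus a Jacobson–Morozov $\fsl_2$-triple for $x_\fl$ in $\fs$ gives, via $e^{-t\ad h}\cdot x\to x_\fz\in\overline{\cO_x}$ and Corollary \ref{cor:2.6}, that $x_\fz\in C_x\cap\fz$; more generally, $\co_\fz(x)\subseteq p_\fz(\co(x))\subseteq C_x\cap\fz$, so $\cone(\co_\fz(x))\subseteq C_x$ is pointed. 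For direction $\Leftarrow$: by (a), $\co(x)$ is pointed. If $x_\fl$ is not nilpotent, one argues (using the reductive projection onto $\fl$ and Corollary \ref{cor:herm}/Proposition \ref{prop:3.10}) that $0\notin\co(x)$; a separating hyperplane then places $C_x$ in a closed half-space, so $C_x$ is pointed. If $x_\fl$ is nilpotent, set $C_\fz:=\cone(\co_\fz(x))\subseteq\fz$, which is a pointed $\Inn(\g)$-invariant cone (every subset of $\fz$ is invariant); enlarge to contain $C_{\min,\fz}$ for a suitable positive system, and apply Remark \ref{rem:conewcz} (via Proposition \ref{prop:wl-gen}) to produce a pointed generating invariant cone $W\supseteq C_\fz+W_\fl(C_\fz)$ with $x_\fl\in W_\fl(C_\fz)$ (by the defining inequality $[v,[v,x_\fl]]\in C_{x,\fz}\subseteq C_\fz$). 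One checks $x\in W$ and concludes $C_x\subseteq W$ is pointed.

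The main obstacle is the $\Leftarrow$ direction of (a): passing from the purely $\fz$-valued hypothesis to a global statement requires ruling out ``hidden'' lines in the $V+\fl$ part of $H(\co(x))$, which is exactly where the non-degeneracy of $\beta$ and the standing faithfulness assumption on $\fl\curvearrowright V$ together do the work. The companion difficulty in part~(b) is producing an invariant cone containing $x$ in the nilpotent case; the correct construction is the one in Remark \ref{rem:conewcz}, and one must verify that the pointedness hypothesis on $\cone(\co_\fz(x))$ is precisely what makes $x_\fl$ lie in the interior-type cone $W_\fl(C_\fz)$.
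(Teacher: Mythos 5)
Your preliminary observation that $p_\fz(\cO_x)=H^\fz_x(V)$, hence $\oline{p_\fz(\co(x))}=\co_\fz(x)$, is correct and leads to a genuinely streamlined proof of part~(a). In particular, your $\Leftarrow$ argument works directly with the edge: since $H(\co(x))$ is an $\Inn(\g)$-invariant linear subspace (hence an ideal), $p_\fz$ kills it by the identity $\oline{p_\fz(\co(x))}=\co_\fz(x)$ and pointedness of $\co_\fz(x)$; then $[V,y_V]=0$ forces $y_V=0$ by non-degeneracy of $\beta$, and $[V,y_\fl]\subeq V\cap\fl=\{0\}$ forces $y_\fl=0$ by faithfulness. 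This avoids both the intermediate reduction to $x_V=0$ and the inclusion $\co(x_\fl)\subeq C_{x_\fl,\fz}+V+W_\fl$ that the paper uses, at the cost of your orbit-invariance observation. (Small slip: once $H(\co(x))\subeq\fl$ you get $[V,y_\fl]\subeq V\cap\fl=0$ for free; invoking Lemma~\ref{lem:abideal}(b) there is not what is needed, and $V\cap H(\co(x))$ is not a priori an ideal.)

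Part~(b) has genuine gaps. In the $\Rightarrow$ direction the chain ``$\co_\fz(x)\subeq p_\fz(\co(x))\subeq C_x\cap\fz$'' has the first inclusion reversed (in general $p_\fz(\co(x))\subeq\co_\fz(x)$), and $p_\fz(\co(x))\subeq C_x$ needs an argument since $p_\fz$ does not preserve arbitrary invariant cones (cf.\ Example~\ref{ex:hsp}). The correct route (after reducing to $x_V=0$) is the identity $\co_\fz(x)=x_\fz+C_{x,\fz}$ together with $x_\fz\in C_x$ (via Jacobson--Morozov as you note) and $C_{x,\fz}\subeq\lim(\co(x))\subeq C_x$ from Lemma~\ref{lem:centlimcon}; then $\co_\fz(x)\subeq C_x\cap\fz$ because $C_x$ is closed under addition. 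In the $\Leftarrow$ non-nilpotent case, ``a separating hyperplane places $C_x$ in a closed half-space, so $C_x$ is pointed'' is not valid reasoning: a cone in a half-space need not be pointed. What you need is Lemma~\ref{lem:conecrit}, using both that $\lim(\co(x))$ is pointed (from part~(a)) and that $0\notin\co(x)$; and the proof that $0\notin\co(x)$ goes via the Jordan decomposition of $x_\fl$, Proposition~\ref{prop:1.1}, and Lemma~\ref{lem:forcharthm}, not the results you cite.

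In the $\Leftarrow$ nilpotent case, the step ``enlarge to contain $C_{\min,\fz}$ for a suitable positive system'' is a gap: there is no reason the enlarged cone remains pointed, and this is precisely what Remark~\ref{rem:conewcz} and Proposition~\ref{prop:wl-gen} require as input. Moreover, you are attempting to build a pointed \emph{generating} invariant cone containing $x$, which is more than you need and, as Example~\ref{ex:counterex} shows, not always possible even when $C_x$ is pointed. The correct and simpler route is the paper's: take the pointed cone $D_x\supeq\co_\fz(x)$, form $W_\fl:=W_\fl(D_x)$, which is pointed by Lemma~\ref{lem:Wl} alone (no positivity-system hypothesis, no generation needed), observe $x\in D_x+V+W_\fl$ and that this set is closed, convex and $\Inn(\g)$-invariant, hence $C_x\subeq D_x+V+W_\fl$; then $H(C_x)$ is an ideal contained in $H(D_x+V+W_\fl)=V$, so it is trivial by Lemma~\ref{lem:abideal}(b).
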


We shall see in Example~\ref{ex:counterex} below that 
the pointedness of $C_x$ does in general not 
imply the existence of a pointed generating 
invariant cone $W$ containing~$x$. 

\begin{prf}  (a) 
If $\co(x)$ is pointed, then we may assume that 
$x = x_\fz + x_\fl$ by Theorem~\ref{thm:conj1}. Note that 
$H_x^\fz$ only depends on the orbit $e^{\ad V}x$.  
Now $\co(x_\fl) = \co(x) -x_\fz$ is also pointed. 
That the cone $C_{x,\fz} = C_{x_\fl,\fz} = \coz(x_\fl)$ is pointed 
follows from 
$C_{x,\fz} \subeq \fz \cap \lim(\co(x))$ 
(Lemma~\ref{lem:centlimcon}). 

Suppose, conversely, that $\coz(x)$ is pointed. 
As 
\begin{equation}
  \label{eq:x.1}
 C_{x,\fz} \subeq \lim(\coz(x)),
\end{equation}
by Lemma~\ref{lem:centlimcon}, 
the cone $C_{x,\fz}$ is pointed. 
For $V_{x_\fl,0} := \{ v \in V \: [x_\fl,v] = 0\}$, 
the set $H_x^\fz(V_{x_\fl,0}) = x_\fz + [V_{x_\fl,0}, x_V]$ is an affine space. 
Our assumption implies that it is trivial, so that 
$[x_V,V_{x_\fl,0}] = \{0\}$ and thus $x_V \in [x_\fl, V]$ 
by Lemma~\ref{lem:v0v1}(b). 
Hence $\cO_x \cap (\fz \oplus \fs) \not=\eset$ 
follows from~\eqref{eq:conjcrit}. 
We may therefore assume that $x_V = 0$. 
Then $x = x_\fz + x_\fl$ and 
$\co(x) = x_\fz + \co(x_\fl)$. It therefore suffices to show that 
$\co(x_\fl)$ is pointed if the cone $C_{x_\fl,\fz}$ is pointed. 

By Lemma~\ref{lem:Wl}, 
\begin{equation}
  \label{eq:wl}
 W_\fl := \{ y \in \fl \: (\forall v\in V)\ [v,[v,y]] \in C_{x_\fl,\fz}\} 
\end{equation}
is a pointed invariant cone in $\fl$.  
As $x_\fl \in W_\fl$, we obtain 
\begin{equation}
  \label{eq:14}
 \co(x_\fl) \subeq C_{x_\fl,\fz} + V + W_\fl.
\end{equation}
Therefore the linear subspace $H(\co(x_\fl))$ is an ideal of $\g$ 
contained in $V$, hence trivial (Lemma~\ref{lem:abideal}(b)).
This shows that $\co(x_\fl)$ is pointed. 

\nin (b) Suppose that $C_x$ is pointed. Then 
$\co(x)$  is pointed and thus $\coz(x)$ is pointed by (a). 
Suppose that $x_\fl$ is nilpotent. We have to show that 
$\coz(x)$ is contained in a pointed cone. 
From  
\begin{equation}
  \label{eq:point2}
C_{x,\fz} \subeq \lim(\co(x)) \subeq C_x
\end{equation}
(Lemma~\ref{lem:centlimcon})  it follows that $C_{x,\fz}$ is pointed. 
As we have seen in (a), we may assume that $x_V = 0$, so that 
$x = x_\fz + x_\fl$. Further, the nilpotency of $x_\fl$ implies that 
it is contained 
in $\fs$ (Corollary~\ref{cor:1.7}).
Moreover, $0 \in \co(x_\fl)$ by Lemma~\ref{lem:forcharthm}, so that 
$C_x \supeq \co(x) = x_\fz + \co(x_\fl)\ni x_\fz.$ 
We conclude that 
\[\coz(x) 
= x_\fz + C_{x_\fl,\fz} 
= x_\fz + C_{x,\fz} \subeq C_x \] 
is contained in the pointed cone~$C_x\cap \fz$. 

Conversely, suppose that $\coz(x)$ is pointed 
and further that $x_V = 0$ by (a) and Theorem~\ref{thm:conj1}. 
\begin{itemize}
\item If $\coz(x)$ is contained in a pointed cone $D_x \subeq \fz$, then 
\[ \co(x)  \subeq D_x + V + \co_\fl(x_\fl).\] 
In the proof of (a) we have seen 
that $C_{x_\fl,\fz} \subeq \lim(\co_\fz(x))$ is pointed, and that $x_\fl$ is contained in the 
pointed invariant cone $W_\fl \subeq \fl$ from 
\eqref{eq:wl}. Therefore 
$C_x \subeq D_x  + V + W_\fl$ 
shows that $H(C_x) \trile \g$ is an ideal contained in $V$, 
hence trivial (Lemma~\ref{lem:abideal}(b)). 
\item If $\coz(x)$ is not contained in a pointed cone, then we assume that 
$x_\fl$ is not nilpotent. 
We show that $C_x$ is pointed by verifying that $0 \not\in \co(x)$ 
and applying Lemma~\ref{lem:conecrit}. 
As $\co(x) = x_\fz + \co(x_\fl),$ 
we have to show that $-x_\fz \not\in \co(x_\fl)$. 
We claim that $0 \not\in \co_\fl(x_\fl).$
In fact, if $x_\fl = y_s + y_n$ is the Jordan decomposition, 
where $y_n \in [\fl,\fl]$ is nilpotent and $y_s = x_\fl - y_n$, 
then $y_s, y_n \in W_\fl$ (Proposition~\ref{prop:1.1}). This shows that 
\[ \cO^L_{x_\fl} \subeq \cO^L_{y_s} + W_\fl
\quad \mbox{ implies } \quad 
 \co_\fl(x_\fl) \subeq \co_\fl(y_s) + W_\fl \subeq W_\fl.\] 
As $y_s \not=0$ by assumption 
and $W_\fl$ is pointed by (a), $0 \not\in \co_\fl(y_s)$ by 
Lemma~\ref{lem:forcharthm}, and this  implies that 
$0 \not\in \co_\fl(x_\fl)$. Finally we observe that 
\[ \co(x_\fl) \subeq \fz + V + \co_\fl(x_\fl),\]
and by the preceding argument, this convex set intersects $\fz$ trivially. 
Therefore $-x_\fz \not\in \co(x_\fl)$. 
\qedhere\end{itemize}
\end{prf}

\begin{rem} (a) From \eqref{eq:14} we derive in particular that 
the pointedness of $\co(x)$ implies the pointedness of 
$\co_\fl(x_\fl) \subeq \fl$, i.e., 
$p_\fl(\g_{\rm co}) \subeq \fl_{\rm co}.$ 

\nin (b) For $x = x_\fz + x_\fl \in \fz + \fl$, we have 
\[ \coz(x) = x_\fz + C_{x,\fz},\] 
which is pointed if and only if $C_{x,\fz}$ is pointed. 
If this is the case, then $\coz(x)$ is contained in a pointed cone if and only if 
\[ (x_\fz + C_{x,\fz}) \cap - C_{x,\fz} \subeq \{ 0\},\] 
which means that the intersection either is empty or $x_\fz = 0$ 
(cf.~Lemma~\ref{lem:pointconegen}). 
\end{rem}

\begin{rem} Let $f \in \fz^*$, considered as a linear functional on 
$\g = \g(\fl,V,\fz,\beta)$ via $f(z,v,x) := f(z)$. 
Then $f$ is fixed under the coadjoint action of $\Inn_\g(\fl)$, so that 
its coadjoint orbit is 
\[\cO_f = f \circ \Inn(\g) = f \circ e^{\ad V}. \] 
For $x = x_\fz + x_V + x_\fl \in \g$ we therefore have 
$\cO_f(x) = f(e^{\ad V}x) = f \circ H_x^\fz(V).$ 
This implies that 
\[ B(\cO_f) 
:= \{ x \in \g \: \inf \cO_f(x) > -\infty\} 
= \{ x \in \g \: \inf f(\co_\fz(x)) > -\infty\}.\] 
For elements $x = x_\fz + x_\fl \in \fz + \fl$, it follows that 
\[ x \in B(\cO_f) \quad \mbox{ if and only if } \quad 
f\in C_{x,\fz}^\star.\] 
\end{rem}

\begin{cor}\mlabel{cor:jacobialg} 
Let $\g :=\hsp_{2n}(\R)$ 
be the Jacobi--Lie algebra 
of inhomogeneous polynomials of degree $\leq 2$ on the symplectic 
vector space $(\R^{2n},\omega)$, endowed with the Poisson bracket. 
Then the following assertions hold: 
\begin{itemize}
\item[\rm(a)] $\co(x)$ is pointed if and only if 
the corresponding function $H_x$ on $\R^{2n}$ is semibounded, 
i.e., bounded from below or above. 
\item[\rm(b)] $C_x$ is pointed if and only if $x$ or $-x$ satisfies 
  \begin{itemize}
  \item[\rm(i)] $H_x$ is bounded from below, and 
  \item[\rm(ii)] if $x_\fl\not=0$ is nilpotent, then $H_x \geq 0$. 
%  \item[\rm(i)] $H_x \geq 0$, or 
%  \item[\rm(ii)] $H_x$ is bounded from below, $H_x$ has negative values, 
%and  $x_\fs$ is not nilpotent. 
  \end{itemize}
\end{itemize}
\end{cor}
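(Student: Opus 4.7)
The plan is to recognize $\g = \hsp_{2n}(\R)$ as the Spindler algebra $\g(\fl,V,\fz,\beta)$ where $\fz \cong \R$ consists of constants, $V \cong \R^{2n}$ of linear polynomials, and $\fl \cong \sp_{2n}(\R)$ of homogeneous quadratic polynomials, with bracket equal to the Poisson bracket; the action of $\sp_{2n}(\R)$ on $\R^{2n}$ is its faithful defining representation. Admissibility is then immediate from Theorem~\ref{thm:spind}: the quadratic Hamiltonian $\tfrac{1}{2}\sum(q_i^2+p_i^2)$ is positive definite, verifying condition (e), and the remaining conditions are evident from the polynomial grading together with $\fz(\g) = \R$. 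Hence Theorem~\ref{thm:2.12} applies.

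The key computational step I would carry out is to identify $H_x^\fz$ with the polynomial $H_x$ itself. To each linear polynomial $v \in V$, associate its Hamiltonian vector field $X_v$, which is a \emph{constant} vector field on $\R^{2n}$; the assignment $v \mapsto X_v$ is a linear bijection $V \to \R^{2n}$. Since $\ad v = \{v,\cdot\}$ acts on the polynomial algebra as differentiation along $X_v$, the exponential $e^{\ad v}$ acts as translation by $X_v$. Projecting onto $\fz$ amounts to extracting the constant term, i.e.\ evaluating at the origin, so
\[ H_x^\fz(v) \;=\; p_\fz(e^{\ad v}x) \;=\; x(X_v) \;=\; H_x(X_v). \]
Since $v \mapsto X_v$ is surjective, $H_x^\fz(V) = H_x(\R^{2n})$, and therefore $\coz(x) = \oline{\conv}(H_x(\R^{2n})) \subeq \R$.

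With this identification, both parts reduce to Theorem~\ref{thm:2.12} combined with elementary one-dimensional convex geometry. For (a), a closed convex subset of $\fz = \R$ is pointed if and only if it is not all of $\R$, which happens if and only if $H_x$ is semibounded; so Theorem~\ref{thm:2.12}(a) gives the equivalence. For (b), the pointed closed convex cones in $\R$ are $[0,\infty)$, $(-\infty,0]$ and $\{0\}$. After replacing $x$ by $-x$ if necessary---an operation that preserves nilpotency of $x_\fl$ and the pointedness of $\coz(x)$---we may assume $H_x$ is bounded below, whence $\coz(x)$ lies in a pointed cone if and only if $H_x \geq 0$. The restriction to $x_\fl \neq 0$ in (ii) is natural because if $x_\fl = 0$ then pointedness of $\coz(x)$ already forces $x_V = 0$ (otherwise $H_x$ is affine non-constant, hence not semibounded), so $\coz(x) = \{x_\fz\}$ lies trivially in every pointed cone through the origin of $\R$. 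The only non-routine step is the formula $H_x^\fz(v) = H_x(X_v)$; I expect no serious obstacle beyond bookkeeping sign conventions in the Poisson bracket.
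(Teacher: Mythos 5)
Your proof is correct and follows essentially the same route as the paper: identify $\hsp_{2n}(\R)$ as $\g(\sp_{2n}(\R),\R^{2n},\R,\omega)$, check admissibility via a positive-definite Hamiltonian, identify $H_x^\fz$ with the polynomial function $H_x$, and then apply Theorem~\ref{thm:2.12} together with one-dimensional convexity. The only stylistic difference is that you establish $H_x^\fz(v) = H_x(X_v)$ via the interpretation of $e^{\ad v}$ as a translation and $p_\fz$ as evaluation at the origin, whereas the paper simply reads off $H_x^\fz(v) = x_\fz + \omega(v,x_V) + \tfrac{1}{2}\omega(x_\fl.v,v) = H_x(v)$ from the explicit formula; both yield the same range, hence the same $\co_\fz(x)$, and your extra remark that the $x_\fl = 0$ case of condition (ii) is automatic (since semiboundedness then forces $x_V = 0$, making $\co_\fz(x)$ a singleton) is a small but welcome elaboration that the paper leaves implicit.
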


\begin{prf} We shall obtain this is a special case of Theorem~\ref{thm:2.12}. 
The Jacobi algebra is admissible 
of the form $\g(\fl,V,\fz,\beta)$ with 
\[ \fl = \sp_{2n}(\R), \quad V = \R^{2n}, \quad 
\fz = \R \quad \mbox{ and } \quad \beta = \omega, \quad 
\omega((\bp_1, \bq_1),(\bp_2, \bq_2)) =  \bp_1 \bq_2- \bp_2 \bq_1.\] 
Further, 
\[ H^\fz_x(v) 
%= x_\fz + \omega(v,x_V) + \frac{1}{2} \omega(v,-x_\fl v)
= x_\fz + \omega(v,x_V) + \frac{1}{2} \omega(x_\fl.v,v) = H_x(v) \] 
is the Hamiltonian function on $(V,\omega)$, corresponding to $x \in \g$. 

\nin (a) We conclude that $\co_\fz(x)\subeq \fz = \R$ is the closed convex hull 
of the range of~$H_x^\fz$. This is a pointed convex set if and only if 
$H_x$ is semibounded. 

\nin (b) means that $C_x$ is pointed if and only if 
$H_x$ is semibounded  and, if $x_\fl$ is nilpotent, then $H_x \geq 0$ 
(if $H_x$ is bounded from below) 
or $H_x \leq 0$ (if $H_x$ is bounded from above). 
So (b) follows from Theorem~\ref{thm:2.12}(b). 
\end{prf}

\begin{ex} For $V = \R^2$ and $\fs = \sp_2(\R) = \fsl_2(\R)$, the 
Hamiltonian function associated to 
\[ x = \pmat{a & b \\ c & -a} 
\quad \mbox{ is given by } \quad  H_x(q,p) = bp^2 - c q^2 + 2a pq.\] 
We have $H_x \geq 0$ if and only if $b,-c \geq 0$ and $a^2 \leq -bc$. 
Then $x$ is either elliptic or nilpotent, where the latter is equivalent to 
$a^2 =-bc$. 

Accordingly, an element $x = x_\fz + x_V + x_\fs \in \hsp_2(\R)$ 
with $H_{x_\fs} \geq 0$ generates a pointed cone 
if either $x_\fs$ is elliptic, i.e., 
positive definite, or if $x_\fs$ is nilpotent and $H_x \geq 0$.  
\end{ex}

\begin{ex} \mlabel{ex:counterex} 
(Elements in $\g_c$, not contained in a pointed generating cone) 
We consider the Lie algebra $\g = \g(\fl,V,\fz,\beta)$, where 
\[\fl = \R^2, \quad V = V_1 \oplus V_2 \oplus V_{1,2} \quad \mbox{ with } 
\quad V_1 = V_2 = \C, V_{1,2} = \C^2, \quad  \fz = \R^2, \] 
and the action of $\fl$ on $V$ is given by 
\[ (x_1,x_2).(z_1, z_2, z_3, z_4) 
= (i x_1 z_1, i x_2 z_2, i(x_1+x_2)z_3, i(x_1 + x_2) z_4).\] 
With $\eps_j(x_1, x_2) = i x_j$, this means that 
\[ \Delta = \Delta_r = \{ \pm \eps_1, \pm \eps_2, \pm (\eps_1 + \eps_2)\}. \]
We define $\beta \: V \times V \to \fz$ by 
\[ \beta(\bz, \bw) 
:= \big(\Im(\oline{z_1}w_1) + \Im(\oline{z_3}w_3), 
\Im(\oline{z_2}w_2) + \Im(\oline{z_4}w_4)\big) \in \R^2 = \fz.\] 
With $V^{[\alpha]} := V \cap (V_\C^\alpha + V_\C^{-\alpha}),$ 
we then have 
\[ V^{[\eps_1]} = V_1, \quad V^{[\eps_2]} = V_2 \quad \mbox{ and } \quad 
 V^{[\eps_1 + \eps_2]} = V_{1,2}.\] 
For $y:= (-1,0) \in \fl$, we obtain with $i\eps_1(y) = 1$: 
\[  C_{\eps_1} 
= \cone \{ [v,[v,y]] \: v \in V_1\}
= \cone \{ \beta(-iv,v) \: v \in  V_1\} = [0,\infty) \be_1.\]
We likewise obtain $C_{\eps_2} = [0,\infty) \be_2$, 
and 
\[  C_{\eps_1 + \eps_2} 
= \cone \{ \beta(-iv,v) \: v \in  V_{1,2}\} 
= [0,\infty) \be_1 + [0,\infty) \be_2.\]
Therefore $\Delta_r^+ := \{ \eps_1, \eps_2, \eps_1 + \eps_2 \}$ 
is an adapted positive system for which 
\[ C_{\rm min} =  [0,\infty) \be_1 + [0,\infty) \be_2 
\subeq \fz \subeq C_{\rm max} \]
is pointed. The invariant cone 
\[ W_{\rm min} := \{ x \in \g \: p_\ft(\cO_x) \subeq C_{\rm min} \} \] 
satisfies $W_{\rm min} \cap  \ft \subeq C_{\rm min}$, so that 
$H(W_{\rm min}) \trile \g$ is an ideal contained in $V$. 
As $\beta \: V \times V \to \fz$ is non-degenerate, this ideal 
is trivial and therefore $W_{\rm min}$ is pointed. 
Now \cite[Lemma~VIII.3.22]{Ne00} implies that $\g$ is admissible. 

Consider the element $x := (1,-1) \in \fl$. 
As $(\eps_1 + \eps_2)(x) = 0$, 
$i\eps_1(x) = -1$ and 
$i\eps_2(x) = 1$, the cone 
\[ C_{x,\fz} = C_{\eps_2} - C_{\eps_1} = \cone(\be_2, -\be_1) \] 
is pointed, so that $C_x$ is pointed by the 
Characterization Theorem~\ref{thm:2.12}. 

We claim that there exists no pointed generating invariant cone 
$W \subeq \g$ containing $x$. Suppose that $W$ is such a cone. 
Then there exists an adapted positive system $\Delta_r^+$ 
with $C_{\rm  min,\fz} \subeq W$ (\cite[Thm.~VII.3.8]{Ne00}). 
As $C_{x,\fz} \subeq C_x \subeq W$ (Lemma~\ref{lem:centlimcon}), we must have 
$\eps_2, -\eps_1 \in \Delta_r^+.$ 
If $\eps_1+ \eps_2$ is positive, then 
\[ C_{\eps_1} \subeq C_{\eps_1 + \eps_2} \subeq C_{\rm min,\fz} 
\quad \mbox{ and } \quad -C_{\eps_1} = C_{-\eps_1} \subeq C_{\rm min,\fz} \] 
contradict the pointedness of $C_{\rm min,\fz}$. If 
$\eps_1+ \eps_2$ is negative, then 
\[ -C_{\eps_2} \subeq C_{-\eps_1 - \eps_2} \subeq C_{\rm min,\fz} 
\quad \mbox{ and } \quad C_{\eps_2}  \subeq C_{\rm min,\fz} \] 
contradict the pointedness of $C_{\rm min,\fz}$. 
Hence there exists no pointed generating invariant cone $W$ containing~$x$. 
\end{ex}

In the preceding example it was important that $\dim \fz > 1$. 
We have the following positive result for the 
Jacobi--Lie algebra, where $\fz = \R$. 

\begin{prop} If $\g = \hsp_{2n}(\R) = \g(\sp_{2n}(\R), \R^{2n},\R,\omega)$ 
and $x \in \g$ is such that $C_x$ is pointed, then $x$ is contained 
in a pointed generating invariant cone $W \subeq \g$. 
\end{prop}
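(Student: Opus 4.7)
The plan is to dispatch this in two cases after a reduction via Theorem~\ref{thm:conj1}.

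Since $C_x$ pointed forces $\co(x)\subseteq C_x$ to be pointed, I invoke Theorem~\ref{thm:conj1} to replace $x$ by a conjugate lying in $\fz+\fl$; as the conclusion is $\Inn(\g)$-invariant, this is harmless. Thus I may assume $x=x_\fz+x_\fl$.

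The main case is $x_\fl\neq 0$, where I would simply take $W:=C_x$ itself. It is invariant by construction, pointed by hypothesis, and contains $x$, so only generation needs checking. The linear span $C_x-C_x$ is the smallest $\Inn(\g)$-invariant subspace containing $x$, hence equals the ideal $\g(x)\trile\g$ generated by $x$. Using the decomposition $\g=\fz\oplus V\oplus\fl$ into irreducible $\fl$-modules, together with $[V,V]=\fz$ and $[V,\fl]=V$, one checks that the only ideals of $\g$ are $\{0\}$, $\fz$, $\fz+V$, and $\g$: the $\fl$-part of any ideal is an ideal of the simple algebra $\sp_{2n}(\R)$, hence either trivial or all of $\fl$; if it is all of $\fl$, the ideal swallows $V=[V,\fl]$ and then $\fz=[V,V]$. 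Since $x_\fl\neq 0$, this forces $\g(x)=\g$, so $C_x$ is generating.

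In the remaining case $x_\fl=0$ one has $x=x_\fz\in\fz=\R\mathbf{1}$, and I would use the explicit cone
\[
W_0:=\{\,y\in\g\:H_y\geq 0\text{ on }V\,\}.
\]
This is closed and convex, and $\Inn(\g)$-invariant because inner automorphisms act on $V$ by affine symplectomorphisms and $H_y$ transforms by pullback. It is pointed: $H_y\equiv 0$ forces $y_\fz=0$, $y_V=0$ and the quadratic form of $y_\fl\in\sp_{2n}(\R)$ to vanish, whence $y=0$. It is generating: fixing any $I\in\sp_{2n}(\R)$ with $H_I$ positive definite and any sufficiently large $t>0$, the identities $v=(t\mathbf{1}+v+I)-(t\mathbf{1}+I)$ and $M=(M+tI)-tI$, together with $\mathbf{1}\in W_0$, exhibit each of $\fz$, $V$, $\fl$ inside $W_0-W_0$. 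Depending on the sign of $x_\fz$, one takes $W:=W_0$ or $W:=-W_0$.

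The main obstacle is the structural fact used in the main case, namely that $C_x$ is already generating whenever $x_\fl\neq 0$. This reduces to the short classification of ideals of $\hsp_{2n}(\R)$, which in turn rests on the simplicity of $\sp_{2n}(\R)$ and the irreducibility of its standard action on $\R^{2n}$; that the $\fz=\R$ hypothesis is what makes the argument go through (contrast with Example~\ref{ex:counterex}) is encoded precisely in this classification step.
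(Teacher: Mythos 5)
Your proof is correct, and it takes a genuinely different route from the paper's. The paper first reformulates pointedness of $C_x$ via semiboundedness of the Hamiltonian $H_x$ (Corollary~\ref{cor:jacobialg}), splits according to whether $x_\fl$ is nilpotent, takes $W := \{y : H_y \geq 0\}$ in the nilpotent case, and in the non-nilpotent case (after reducing to $x_V=0$) shows $W \cap -C_x = \{0\}$ and invokes \cite[Prop.~V.1.7]{Ne00} so that $W + C_x$ is a closed pointed generating invariant cone containing $x$. You instead split on whether $x_\fl = 0$ and observe that in the main case $x_\fl \neq 0$ the cone $W := C_x$ works outright: $\Spann C_x = C_x - C_x = \Spann(\cO_x)$ is the ideal $\g(x)$ generated by $x$, it is not contained in the radical $\fz + V$ since $x_\fl \neq 0$, and the short ideal classification of $\hsp_{2n}(\R)$ (only $\{0\}, \fz, \fz+V, \g$, using that $\fz=\R$ is one-dimensional, $\sp_{2n}(\R)$ is simple, and $V$ is an irreducible $\fl$-module) forces $\g(x) = \g$, i.e.\ $C_x$ is already generating. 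Your residual case $x_\fl = 0$ is handled with the same cone $\{H_y \geq 0\}$ as the paper. Your argument is arguably simpler, bypasses the sum-of-cones lemma entirely, and cleanly isolates where $\dim\fz = 1$ enters (contrast Example~\ref{ex:counterex}); the paper's route has the merit of producing an explicitly described cone $W$ rather than taking the abstract cone $C_x$, but for the statement as posed your shortcut is perfectly adequate.
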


\begin{prf} In view of Corollary~\ref{cor:jacobialg}, we may assume that 
the Hamiltonian function $H_x$ is bounded from below. 
We write $x = x_\fz + x_V + x_\fl$ with $x_\fz \in \fz = \R$, 
$x_V \in V = \R^{2n}$ and $x_\fl \in \sp_{2n}(\R)$. 
If $x_\fl$ is nilpotent, then even $H_x \geq 0$, so that 
\[ x \in W := \{ y \in \g \: H_y \geq 0\},\] 
and $W$ is a pointed generating invariant cone in~$\g$.

We may therefore assume that $x_\fl$ is not nilpotent.
By the Reduction Theorem~\ref{thm:conj1}, we may further assume that 
$x_V = 0$. If $H_x \geq 0$, then $x \in W$; so we assume that 
$x_\fz = \min H_x(V) = H_x(0) < 0$. 
We now show that 
\[ W \cap -C_x = \{0\}.\] 
As $x \in \fz + V + W_\fl$ for $W_\fl := W \cap \fl$ 
(the cone of non-negative quadratic forms), the invariance 
of the set on the right implies $C_x \subeq \fz + V + W_\fl$. 
We conclude that $W \cap - C_x \subeq \fz + V$ 
is a pointed invariant cone. 
As $e^{\ad V}x = x + [V,x]$ for $x \in \fz + V$, 
it follows that $W \cap - C_x \subeq \fz$. We thus obtain 
\[ W \cap - C_x = (W\cap \fz) \cap (-C_x \cap \fz).\] 
If $C_{x,\fz} = \{0\}$, then $x_\fl = 0$ and $x = x_\fz \in - W$. 
So we may also assume that $C_{x,\fz} \not=\{0\}$. 
As $H_x$ is bounded from below, $C_x \supeq C_{x,\fz} = [0,\infty)$, so that 
we must have $C_x \cap \fz \subeq [0,\infty)$, whence 
\[ W \cap - C_x \subeq [0,\infty) \cap (-\infty,0] = \{0\}.\]  
Now \cite[Prop.~V.1.7]{Ne00} implies that the invariant cone 
$W + C_x \subeq \g$ is closed and pointed. It is generating because 
$W$ is generating. 
\end{prf}

\section{Affine pairs} 
\mlabel{sec:4}

In this section we turn to affine pairs related to invariant 
cones. We refer to the introduction for the motivation to study such 
pairs. 

\begin{defn} {\rm(Affine pair)} \mlabel{def:4.1}
Let $\g$ be a finite dimensional real Lie algebra and $W \subeq \g$ a 
pointed invariant cone. We call $(x,h) \in \g \times \g$ 
an {\it affine pair} for the cone $W$ if 
\begin{equation}
  \label{eq:affpa}
 x \in W \quad \mbox{ and } \quad [h,x] = x.
\end{equation}
\end{defn}

For an affine pair, the subalgebra 
$\R h + \R x$ is isomorphic to the non-abelian 
$2$-dimensional Lie algebra $\aff(\R)$; hence the name. 
As this Lie algebra is solvable, $\ad x$ is nilpotent 
(\cite[Prop.~5.4.14]{HN12}).

\subsection{Invariance of $W$ under 
one-parameter groups of outer automorphisms} 

On $\g = \g(\fl,V,\fz,\beta)$ we consider the canonical derivation 
$D_{\rm can}$, defined by 
\begin{equation}
  \label{eq:DV1}
 D_{\rm can}(z,v,x) := \Big(z,\frac{1}{2}v,0\Big). 
\end{equation}
The derivation $2 D_{\rm can}$ 
corresponds to the $\Z$-grading of $\g$, defined by 
\[ \g_0 = \fl, \quad \g_1 = V \quad \mbox{ and } \quad \g_2 = \fz.\] 
In the Existence Theorem~\ref{thm:4.2} below, 
the one-parameter group $e^{\R D}$ with $D \in D_{\rm can} + \ad \g$ 
leaves $W$ invariant if and only if 
$e^{\R D_{\rm can}}$ does. 
The following proposition characterizes the cones $W$ for which 
this is the case.

\begin{prop} \mlabel{prop:invcrit}
For a pointed generating 
invariant cone $W \subeq \g =\g(\fl,V,\fz,\beta)$, 
the following are equivalent: 
\begin{itemize}
\item[\rm(a)] $e^{\R D_{\rm can}} W = W$. 
\item[\rm(b)] $p_\fz(W) \subeq W$ and $p_\fl(W) \subeq W$. 
\item[\rm(c)] $e^{\R D_{\rm can}} (W\cap\ft) = W \cap \ft$. 
\end{itemize}
\end{prop}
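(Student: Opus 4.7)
The plan is to close the cycle (a) $\Rightarrow$ (b) $\Rightarrow$ (c) $\Rightarrow$ (a). The starting point is the explicit form of $\phi_t := e^{t D_{\rm can}}$ on $\g = \fz \oplus V \oplus \fl$: since $D_{\rm can}$ acts diagonally with eigenvalues $1, \tfrac12, 0$ on the three summands,
\begin{equation*}
\phi_t(z, v, x) = (e^t z, e^{t/2} v, x).
\end{equation*}
Note that $D_{\rm can}$ is a derivation, so $\phi_t \in \Aut(\g)$, and $\phi_t$ preserves $\ft = \fz \oplus \ft_\fl$ since $\ft$ has no $V$-component.

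For (a) $\Rightarrow$ (b): For $w = (z,v,x) \in W$, by (a) the curve $\phi_t(w)$ stays in $W$ for all $t \in \R$. Letting $t \to -\infty$ gives $\phi_t(w) \to (0,0,x) = p_\fl(w)$, so by closedness $p_\fl(w) \in W$. For $p_\fz$, the cone property gives $e^{-t}\phi_t(w) \in W$, and $e^{-t}\phi_t(w) = (z, e^{-t/2}v, e^{-t}x) \to p_\fz(w)$ as $t \to +\infty$, so $p_\fz(w) \in W$ again by closedness.

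For (b) $\Rightarrow$ (c): This is the easiest implication, because any $w \in W \cap \ft$ has no $V$-component, so $w = p_\fz(w) + p_\fl(w)$ and hence
\begin{equation*}
\phi_t(w) = e^t p_\fz(w) + p_\fl(w) \qquad (t \in \R).
\end{equation*}
By (b), both summands lie in $W$, and since $e^t > 0$ and $W$ is a convex cone, $\phi_t(w) \in W$; manifestly $\phi_t(w) \in \ft$. Applying the same argument with $-t$ yields the reverse inclusion, hence (c).

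For (c) $\Rightarrow$ (a): Since $\Inn(\g)$ is normal in $\Aut(\g)$ and $\phi_t \in \Aut(\g)$, the cone $\phi_t(W)$ is again pointed, generating, and $\Inn(\g)$-invariant. By the uniqueness clause of \cite[Thm.~VII.3.8]{Ne00}, such a cone is determined by its intersection with $\ft$. By (c), $\phi_t(W) \cap \ft = \phi_t(W \cap \ft) = W \cap \ft$, so $\phi_t(W) = W$. The key technical input is this uniqueness theorem, which is the only non-trivial ingredient beyond the explicit formula for $\phi_t$; the remaining arguments are elementary limits and convexity in $\ft$.
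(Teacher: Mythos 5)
Your proof is correct, and it takes a genuinely different route from the paper's. The paper establishes (a) $\Leftrightarrow$ (b) both ways, then (a) $\Rightarrow$ (c) trivially, then (c) $\Rightarrow$ (a) via the reconstruction formula $W = \overline{\Inn(\g)(W\cap\ft)}$ of \cite[Thm.~VII.3.29]{Ne00}. Its implication (b) $\Rightarrow$ (a) is the substantial one: it invokes the Reduction Theorem~\ref{thm:conj1} to conjugate an arbitrary $x\in W$ into $\fz+\fl$ and then untwists the conjugation, using normality of $\Inn(\g)$. You instead close the cycle (a) $\Rightarrow$ (b) $\Rightarrow$ (c) $\Rightarrow$ (a). Your (b) $\Rightarrow$ (c) is a one-line convexity computation inside $\ft$ and avoids the Reduction Theorem entirely; your (c) $\Rightarrow$ (a) uses the uniqueness clause of \cite[Thm.~VII.3.8]{Ne00} (that a pointed generating invariant cone is determined by its trace on $\ft$) together with normality of $\Inn(\g)$ to see that $\phi_t(W)$ is again such a cone with the same trace. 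These two citations are essentially interchangeable in strength, so the net effect is that you trade the Reduction Theorem for an extra appeal to uniqueness, making the middle leg more elementary. Both versions of (a) $\Rightarrow$ (b) are identical (the same two limits, using closedness of $W$ and the cone property for the $p_\fz$ limit). One small presentational point: you should make explicit, as you implicitly do, that $\phi_t(W)\cap\ft = \phi_t(W\cap\ft)$ uses $\phi_t(\ft)=\ft$, which holds because $\ft=\fz\oplus\ft_\fl$ has no $V$-component.
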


If $W$ satisfies these conditions, then 
\[ W \cap (\fz + \fl) = W_\fz + W_\fl \quad \mbox{ for } \quad 
W_\fz := W \cap \fz\quad \mbox{ and } \quad 
W_\fl := W \cap \fl,\] 
and the Reduction Theorem~\ref{thm:conj1} 
implies that 
\[ W = e^{\ad V}.(W_\fz + W_\fl) = \oline{\Inn(\g).(W_\fz + W_\fl)}\] 
(cf.~\cite[Thm.~VII.3.29]{Ne00}), 
showing that $W$ is uniquely determined by the two cones 
$W_\fz$ and $W_\fl$.

\begin{prf} (a) $\Rarrow$ (b): 
For $x = x_\fz + x_V + x_\fl \in W$ we have for $t \to \infty$ 
\[ e^{-t}(e^{t D_{\rm can}}x) 
= x_\fz + e^{-t/2} x_V + e^{-t} x_\fl \to x_\fz
\quad \mbox{ and } \quad  e^{-t D_{\rm can}}x 
= e^{-t} x_\fz + e^{-t/2} x_V + x_\fl \to x_\fl.\]  

\nin (b) $\Rarrow$ (a): Let $x = x_\fz + x_V + x_\fl \in W$. Then 
Theorem~\ref{thm:conj1} implies the existence of 
$\phi \in \Inn(\g)$ with $y := \phi(x) \in \fz + \fl$. 
Then $y_\fz \in W$ and $y_\fl \in W$ by (a). Therefore 
$e^{tD_{\rm can}} y = e^{t} y_\fz + y_\fl \in W$  for $t \in \R.$ 
Now 
\[ e^{tD_{\rm can}} x 
= e^{tD_{\rm can}} \phi^{-1}(y) 
= e^{tD_{\rm can}} \phi^{-1} e^{-tD_{\rm can}} (e^{tD_{\rm can}}y) 
\in \Inn(\g) W = W.\] 

\nin (a) $\Rarrow$ (c) follows from $D_{\rm can}(\ft) \subeq \ft$. 

\nin (c) $\Rarrow$ (a): As $W$ is pointed and generating, 
\cite[Thm.~VII.3.29]{Ne00} yields 
$W = \oline{\Inn(\g)(W \cap \ft)},$ 
and the invariance of $W$ under $e^{\R D_{\rm can}}$ follows from (c). 
\end{prf}

\begin{ex} \mlabel{ex:hsp}
Not every pointed generating invariant cone satisfies 
$p_\fz(W) \not\subeq W_\fz$: 
For the Jacobi algebra 
\[ \g = \hsp_2(\R) = \g(\sp_2(\R), \R^2, \R, \omega) 
\quad \mbox{ and } \quad 
\omega(\bx,\by) = x_1 y_2 - x_2 y_1, \] 
we have a $2$-dimensional compactly embedded Cartan subalgebra 
\[ \ft = \fz \oplus \ft_\fs \cong \R^2 \quad \mbox{ and } \quad \cW_\fk = \{\id_\ft\}.\]
Up so sign, there is a unique positive system 
$\Delta^+$ (which is adapted). Then 
\[ C_{\rm min} = C_{\rm min,\fz} \oplus C_{\rm min,\fs} \] 
is a quarter plane and 
\[ C_{\rm max} = \R \oplus C_{\rm max,\fs} = \R \oplus C_{\rm min,\fs} \] 
is a half plane. 
Any pointed generating closed convex cone $W_\ft \subeq \ft$ with 
\[ C_{\rm min} \subeq W_\ft \subeq C_{\rm max} \] 
is of the form $W_\ft = W \cap \ft$ for a pointed generating 
invariant cone $W \subeq \g$ because the Weyl group $\cW_\fk$ 
is trivial (\cite[Thm.~VIII.3.21]{Ne00}). 
In particular, we may have 
$p_\fz(W_\ft) = \fz(\g) \not\subeq W_\ft$. Therefore 
we do not always have 
$p_\fz(W) \subeq W.$ 
\end{ex}

\begin{rem} Although the conditions in Proposition~\ref{prop:invcrit} 
are not always satisfied, this is the case for many naturally constructed 
cones. 

Let $\g = \g(\fl,V,\fz,\beta)$ be an admissible Lie algebra and 
$D \in \der(\g)$. We assume that the representation of $\fl$ on $V$ is faithful. 
The cone $W$, constructed from a pointed cone $C_\fz \subeq \fz$ 
in Remark~\ref{rem:conewcz} is generated by 
$C_\fz + W_\fl$ and satisfies 
$p_\fz(W) \subeq C_\fz \subeq W$ and $p_\fl(W) \subeq W_\fl\subeq W.$ 
Therefore $W$~is invariant under $e^{\R D_{\rm can}}$. 

More generally, any derivation $D \in \der(\g)$ with 
$\fl \subeq \ker D$ and $e^{\R D} C_\fz = C_\fz$ satisfies 
$e^{\R D} W = W$ because 
\[ W = e^{\ad V}(W \cap (\fz + \fl)) 
= \Inn(\g)(W \cap (\fz + \fl))  \] 
follows from the Reduction Theorem~\ref{thm:conj1}. 
\end{rem}

\subsection{Extending nilpotent elements to affine pairs} 

Let $W \subeq \g$ be a pointed generating invariant cone. 
In this section we consider a nilpotent element $x \in W$ 
and ask for the existence of a derivation $D \in \der(\g)$ with 
\[ Dx = x \quad \mbox{ and } \quad e^{\R D} W = W.\] 
Note that the latter condition implies that $W$ is an invariant cone 
in the extended Lie algebra $\g_D := \g \rtimes \R D$ 
and $(x,D)$ is an affine pair for~$W$. 

This problem is trivial for semisimple Lie algebras:
\begin{rem} \mlabel{rem:jac-mor} 

\nin (a) If $\g$ is semisimple, $W \subeq \g$ is an invariant cone, 
 and $x \in \g$ is nilpotent, then the 
Jacobson--Morozov Theorem (\cite[Ch.~VIII, \S 11, Prop.~2]{Bo90}) 
implies the existence of elements $h,y \in \g$ with 
\[ [h,x] = x, \quad [h,y] = -y \quad \mbox{ and } \quad [x,y]= h. \]
Then $D := \ad h$ is a derivation with $Dx = x$ and $e^{\R D}W = W$. 

\nin (b) If $\g = \fz(\g) \oplus [\g,\g]$ is reductive 
and $x = x_\fz + x_\fs$ with $0 \not= x_\fz \in \fz(\g)$ and 
$x_\fs \in[\g,\g]$, then it cannot be 
reproduced with inner derivations. For any derivation 
$D$ on $\g$ there exists an endomorphism $D_\z$ of $\fz(\g)$ and 
an element $h \in [\g,\g]$ with 
\[  D(z + x) = D_\fz(z) + [h,x] \quad \mbox{ for } \quad z \in \z(\g), 
x \in \g,\] 
so that $Dx = x$ is equivalent to $D_\fz x_\fz = x_\fz$ and 
$[h,x_\fs] = x_\fs$. If $W\ni x$ is a pointed generating invariant cone, 
then the nilpotency of $x_\fs$ implies that 
${\R_+ x_\fs \subeq  \cO_{x_\fs}}$ (Corollary~\ref{cor:jac-mor}), 
so that $x_\fz, x_\fs \in W$. 
Putting $D_\fz := \id_{\fz(\g)}$, we then have $Dx = x$ and 
at least $e^{\R D}(W_\fz + W_\fs) \subeq W$ for 
$W_\fz := W \cap \fz(\g)$ and $W_\fs := W \cap [\g,\g]$. 

As the maximal cone $W_{\rm max} \subeq \g$ contains $\fz(\g)$, 
it is invariant under $e^{\R D}$ for any $D \in \der(\g)$. 
\end{rem}

The following lemma provides crucial information that we shall 
need below to explore the existence of Euler derivations on 
$\g(\fl,V,\fz,\beta)$, i.e., a diagonalizable derivation 
with $\Spec(D) \subeq \{0,\pm 1\}$. 

\begin{lem}
  \label{lem:conv-module-euler-elmnt}
  Let \(\fl\) be a reductive quasihermitian Lie algebra and 
\(\fs \subset \fl\) be a semisimple subalgebra invariant 
under a compactly embedded Cartan subalgebra $\ft_\fl$  
such that every simple ideal in \(\fs\) is hermitian of tube type.
Let \((V,\omega)\) be a symplectic \(\fl\)-module of convex type 
and consider the subspaces
  \[V_{\rm eff,\fs} := \spann (\fs.V) \quad \text{and} \quad V_{\fix,\fs} := \{v \in V : \fs.v = \{0\}\}.\]
    Then the following assertions hold: 
    \begin{enumerate}
      \item[\rm(a)] \(V = V_{\rm eff,\fs} \oplus V_{\fix,\fs}\) is an \(\omega\)-orthogonal direct sum of \(\fs\)-submodules and the submodule \(V_{\rm eff,\fs}\) is a symplectic module of convex type for $\fs$.
      \item[\rm(b)] For every Euler element \(h \in \fs\) 
for which \(\fs_{\pm 1}(h)\) 
generate \(\fs\),  the operator $2 \ad h$ defines an 
antisymplectic involution on~\(V_{\rm eff,\fs}\).
    \end{enumerate}
\end{lem}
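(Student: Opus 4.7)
I would first apply Weyl's theorem on complete reducibility for the semisimple Lie algebra $\fs$ acting on the finite-dimensional space $V$, decomposing $V$ into $\fs$-isotypic components. The trivial isotypic piece is exactly $V_{\fix,\fs}$, and its complement coincides with $V_{\rm eff,\fs}=\spann(\fs.V)$: indeed, for every non-trivial irreducible $\fs$-submodule $W$ one has $\fs.W=W$, while $\fs.V_{\fix,\fs}=\{0\}$. For $\omega$-orthogonality, for $v\in V_{\fix,\fs}$, $u\in V$ and $x\in\fs$, the infinitesimal symplecticity of the $\fs$-action gives
\[\omega(x.u,v)=-\omega(u,x.v)=0,\]
hence $\omega(V_{\rm eff,\fs},V_{\fix,\fs})=\{0\}$; non-degeneracy of $\omega$ on $V$ then forces the restriction to each summand to be non-degenerate. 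Convex type of the $\fs$-module $V_{\rm eff,\fs}$ would be established by starting from an $x_0\in\fl$ with positive definite Hamiltonian $H_{x_0}^\omega$; averaging $x_0$ over the compact torus generated by $\ft_\fl$ (which preserves positivity of the Hamiltonian and exists because $\ft_\fl$ is compactly embedded) lands us in the centralizer of $\ft_\fl$ in $\fl$, and projecting onto $\fs$ along a $\ft_\fl$-invariant complement (available because $\fs$ is $\ft_\fl$-invariant and $\ad\ft_\fl$ acts semisimply on $\fl$) produces a candidate element in $\fs$ whose Hamiltonian, restricted to $V_{\rm eff,\fs}$, is positive definite by an orthogonality argument exploiting the $\fs$-invariance of the isotypic decomposition.

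\textbf{Plan for (b).} The critical step is identifying the spectrum of $\ad h$ on $V_{\rm eff,\fs}$. The Euler property and the assumption that $\fs_{\pm1}(h)$ generate $\fs$ yield the $3$-grading $\fs=\fs_{-1}(h)+\fs_0(h)+\fs_1(h)$, and the inclusion $\fs_{\pm1}(h).V_\mu(\ad h)\subseteq V_{\mu\pm1}(\ad h)$ shows that the $\ad h$-spectrum on each irreducible $\fs$-summand of $V_{\rm eff,\fs}$ lies in an arithmetic progression with spacing $1$. The tube-type hypothesis together with convex type (from (a)) then forces the extremal weights on each non-trivial irreducible summand to equal $\pm\tfrac12$: this is the standard outcome for symplectic convex-type modules of simple hermitian tube-type algebras, pinned down by positivity of the Hamiltonian combined with termination of the $\fs_{\mp 1}(h)$-actions at the extremal eigenspaces. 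Hence $V_{\rm eff,\fs}=V_{1/2}(\ad h)\oplus V_{-1/2}(\ad h)$, and $J:=2\ad h$ is an involution. From $\ad h\in\sp(V,\omega)$ one infers $\omega(v,w)=0$ whenever $v,w$ lie in the same $\ad h$-eigenspace, so the two eigenspaces are Lagrangian in $V_{\rm eff,\fs}$. Antisymplecticity then reduces to the cross term: for $v\in V_{1/2}(\ad h)$ and $w\in V_{-1/2}(\ad h)$,
\[\omega(Jv,Jw)=\omega(v,-w)=-\omega(v,w),\]
which extends to all of $V_{\rm eff,\fs}$ by bilinearity and the Lagrangian property.

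\textbf{Main obstacle.} Pinning down the exact spectrum $\{\pm\tfrac12\}$ of $\ad h$ on $V_{\rm eff,\fs}$ in part (b) is the heart of the argument and depends essentially on the tube-type hypothesis; without it one typically also encounters extremal eigenvalues at $\pm\tfrac32$, and the involution property would fail. The convex-type claim in (a) is also non-trivial, as convex type does not automatically descend from $\fl$ to an arbitrary semisimple subalgebra — it is precisely the $\ft_\fl$-invariance of $\fs$ that makes the averaging-and-projection step feasible.
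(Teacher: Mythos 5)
Your decomposition and $\omega$-orthogonality argument for part (a) is correct and coincides with the paper's. The real content of (a), however, is the claim that $V_{\rm eff,\fs}$ is of convex type as an $\fs$-module, and here your averaging-and-projection sketch has a genuine gap. After averaging you obtain $\bar{x}_0\in\ft_\fl$ with $H_{\bar x_0}$ positive definite, and you then want to conclude that the component of $\bar x_0$ in $\ft_\fs:=\ft_\fl\cap\fs$ (along a $\ft_\fl$-invariant complement) still has positive definite Hamiltonian on $V_{\rm eff,\fs}$. But for a weight $\mu$ of $\ft_\fl$ occurring in $V_{\rm eff,\fs}$ the sign of $H_y$ on $V^{[\mu]}$ for $y\in\ft_\fl$ is governed by the sign of $-i\mu(y)$, and there is no reason why replacing $\bar x_0$ by its $\ft_\fs$-component should preserve the signs of $-i\mu(\cdot)$ on the full set of such $\mu$; in particular one must also rule out weights $\mu$ that vanish identically on $\ft_\fs$, in which case \emph{no} $y\in\fs$ can give a positive Hamiltonian there. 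The "orthogonality argument exploiting the $\fs$-invariance of the isotypic decomposition" is exactly the missing step and it is not an easy one. The paper handles this by passing to $\fl'=\ft_\fl+\fs$ (where $\fs$ becomes an ideal), splitting off the edge $\fl_1$ of the invariant cone $W_{V,\fl'}$, which is precisely the kernel of the $V$-action, and then showing that $W_{V,\fs_2}$ contains the minimal cone $W_{\rm min,\fs_2}$ and is therefore generating; this is genuinely different from a projection argument and is what makes the convex-type claim go through.

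For part (b), your outline is sound modulo the key spectral claim that the $\ad h$-spectrum on $V_{\rm eff,\fs}$ is exactly $\{\pm\tfrac12\}$, which you cite as "the standard outcome" for tube-type convex modules; the paper reaches the same endpoint by citing \cite[Thm.~2.14, Lem.~3.4]{Oeh20b} after first decomposing $V_{\rm eff,\fs}$ along the simple ideals of $\fs$ so that each factor sees a single $\fs_j$. Your derivation of antisymplecticity from the Lagrangian decomposition $V_{\rm eff,\fs}=V_{1/2}(\ad h)\oplus V_{-1/2}(\ad h)$ is a correct and mildly more self-contained way to phrase what \cite[Lem.~3.4]{Oeh20b} provides. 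So the net assessment is: (b) is essentially fine (relying on the same classification input the paper itself uses), but (a) needs to be reworked along the lines of the invariant-cone argument, since the averaging-projection step does not by itself deliver convex type of $V_{\rm eff,\fs}$ as an $\fs$-module.
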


\begin{proof}
  (a) Since \((V,\omega)\) is a symplectic $\fl$-module of 
convex type with respect to the action of \(\fl\), the invariant cone
  \[W_{V,\fl} = \{x \in \fl : (\forall v \in V)\, \omega(x.v,v) \geq 0\}\]
  is generating, hence intersects the Cartan subalgebra $\ft_\fl$. 
Therefore $(V,\omega)$ also is a symplectic module of convex type 
for the reductive subalgebra $\fl' := \ft_\fl + \fs$. 
The edge $\fl_1$ of the generating invariant cone 
\[W_{V,\fl'} = \{x \in \fl' : (\forall v \in V)\, \omega(x.v,v) \geq 0\}\]
is the kernel of the representation of $\fl'$ on $V$, 
hence an ideal. We write 
\begin{equation}
  \label{eq:l'}
 \fl' = \fl_1 \oplus \fl_2 
\end{equation}
with a complementary ideal $\fl_2$. Then 
\[W_{V,\fl'} = \fl_1 \oplus W_{V,\fl_2} \quad \mbox{ with } 
W_{V,\fl_2} \ \mbox{ pointed and generating}.\] 
As $\fs = [\fl', \fl']$ adapts to the decomposition~\eqref{eq:l'}, 
\[ \fs = \fs_1 \oplus \fs_2 \quad \mbox{ with }\quad \fs_j := \fs \cap \fl_j.\]
Now 
\[W_{V,\fs} = \{x \in \fs : (\forall v \in V)\, \omega(x.v,v) \geq 0\}
= \fs_1 \oplus W_{V,\fs_2},\] 
where the pointed cone $W_{V,\fs_2} = W_{V,\fl_2} \cap \fs_2$ is also 
generating because it contains $W_{\rm min,\fs_2}$. 
Therefore $(V,\omega)$ is a symplectic $\fs_2$-module 
of convex type. 
As $V$ is a semisimple $\fs$-module, 
\[ V 
= V_{\rm eff,\fs} \oplus V_{\rm fix,\fs}
= V_{\rm eff,\fs_2} \oplus V_{\rm fix,\fs_2},\] 
where $V_{\rm eff,\fs}$ is a symplectic $\fs_2$-module 
of convex type because 
$W_{V,\fs_2} = W_{V_{\rm eff},\fs}$ is pointed and generating 
(\cite[Prop.\ II.5]{Ne94}). 

  (b) Let \(h \in \fs\) be an Euler element for which \(\fs_{\pm 1}(h)\) 
generate \(\fs\). 
  We decompose \(\fs\) into a direct sum 
\(\bigoplus_{j=1}^n \fs_j\) of simple ideals, which are 
hermitian of tube type because they possess Euler elements 
(\cite[Prop.~3.11(b)]{MN21}). 
  By \cite[Thm.\ 2.14]{Oeh20b}, we can decompose \(V_{\rm eff,\fs}\) into a direct sum \(\bigoplus_{j=1}^n V_j\) of \(\fs\)-submodules such that \(\fs_k\) acts trivially on \(V_j\) for $k \neq j$. 
  In particular, each \((V_j,\omega)\) is a symplectic $\fs_j$-module of convex type.
  We decompose \(h\) as \(h = \sum_{j=1}^n h_j\), with \(h_j \in \fs_j\).
Then each \(h_j\) is an Euler element in \(\fs_j\).
  Hence \cite[Lem.\ 3.4]{Oeh20b} implies that, for each $j$,  the operator 
$2 \ad h_j$ defines an antisymplectic involution on \(V_j\), 
and thus \(2 \ad h\) defines an antisymplectic involution on~\(V_{\rm eff,\fs}\).
\end{proof}

\begin{thm} {\rm(Existence Theorem)}  \mlabel{thm:4.2}
Let $\g = \g(\fl,V,\fz,\beta)$ be an admissible Lie algebra 
and $x = x_\fz + x_\fl\in \fz + \fl$ be  an 
$\ad$-nilpotent element for which $\co(x)$ is pointed. 
Then there exists a derivation 
$D \in D_{\rm can} + \ad \g$ with $Dx = x$ and 
\[ \Spec(D) \subeq  \big\{ 0, \pm \shalf, \pm 1\big\}.\]

Any invariant cone  $W$ generated by $W_\fl := W \cap \fl$ and 
a central cone $W_\fz \subeq \fz$ satisfies $e^{\R D}W = W$. 
\end{thm}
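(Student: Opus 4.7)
The plan is to take $D := D_{\mathrm{can}} + \ad h$ with $h := \tfrac{1}{2}h_0$, where $(x_\fl, h_0, y_0)$ is a Jacobson--Morozov $\fsl_2$-triple for $x_\fl$ inside the semisimple subalgebra $\fs := [\fl,\fl]$. Since $x$ is $\ad$-nilpotent and $\co(x)$ is pointed, Corollary~\ref{cor:1.7} places $x_\fl \in \fs$, so such a triple exists when $x_\fl \neq 0$ (the case $x_\fl = 0$ being settled by $D := D_{\mathrm{can}}$, whose spectrum is $\{0, \tfrac{1}{2}, 1\}$). The relation $Dx = x$ is then immediate: $D_{\mathrm{can}}$ fixes $x_\fz$ and kills $x_\fl$, while $\ad h$ kills the central $x_\fz$ and sends $x_\fl$ to $x_\fl$.

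The bulk of the argument is the spectrum bound. The operator $D$ acts as the identity on $\fz$, as $\tfrac{1}{2}\mathrm{id} + \ad h|_V$ on $V$, and as $\ad h|_\fl$ on $\fl$. To handle $V$, I would invoke Lemma~\ref{lem:conv-module-euler-elmnt}(b) applied to the subalgebra $\fs_x := \mathrm{span}\{x_\fl, h_0, y_0\} \cong \fsl_2(\R)$: this is simple hermitian of tube type, $h$ is an Euler element of $\fs_x$, and $(\fs_x)_{\pm 1}(\ad h) = \R x_\fl \oplus \R y_0$ generates $\fs_x$. Fixing $f \in \fz^*$ as in Theorem~\ref{thm:spind}(e) makes $(V, f \circ \beta)$ a symplectic $\fl$-module of convex type, hence also a symplectic $\fs_x$-module of convex type. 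The lemma then forces $2 \ad h = \ad h_0$ to act as an antisymplectic involution on $V_{\mathrm{eff},\fs_x}$ and by zero on $V_{\mathrm{fix},\fs_x}$, so $\Spec(\ad h|_V) \subseteq \{-\tfrac{1}{2}, 0, \tfrac{1}{2}\}$ and $\Spec(D|_V) \subseteq \{0, \tfrac{1}{2}, 1\}$.

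The main obstacle is the $\fl$-spectrum: I must show $\Spec(\ad h_0|_\fl) \subseteq \{-2,-1,0,1,2\}$, equivalently $(\ad x_\fl)^3 = 0$ on $\fl$ (by $\fsl_2$-representation theory applied to the adjoint $\fs_x$-action on $\fl$). The pointedness of $\co(x)$ is decisive here. By the Characterization Theorem~\ref{thm:2.12} (and Remark~\ref{rem:conewcz}), $x_\fl$ lies in a pointed invariant cone $W_\fl \subeq \fl$. Since $x_\fl$ is nilpotent it vanishes on the compact simple ideals of $\fs$, so the problem reduces to the components of $x_\fl$ in the non-compact simple hermitian ideals of $\fs$. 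On each such simple hermitian factor, the classification of nilpotent orbits in pointed invariant cones from \cite{HNO94} shows that every nilpotent orbit in such a cone is of ``depth $\leq 2$'', yielding the required bound on each factor and hence on all of $\fl$.

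For the cone invariance: Proposition~\ref{prop:invcrit} gives $e^{\R D_{\mathrm{can}}} W = W$ because $W$ is generated by $W_\fz$ and $W_\fl$. A short calculation using \eqref{eq:DV1} shows $[D_{\mathrm{can}}, \ad h] = 0$ as operators on $\g$, since $h \in \fl$ annihilates $\fz$, preserves $V$ where it commutes with the homothety $v \mapsto v/2$, and acts on $\fl$ where $D_{\mathrm{can}}$ vanishes. Hence $e^{tD} = e^{t D_{\mathrm{can}}} \cdot e^{t \ad h}$, and both factors preserve $W$ (the second because $e^{t \ad h} \in \Inn(\g)$), giving $e^{\R D} W = W$.
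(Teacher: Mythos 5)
Your overall plan---take $D = D_{\mathrm{can}} + \ad h$ for a suitable $h \in \fs$ with $[h,x_\fl] = x_\fl$ and then bound the spectrum on $\fz$, $V$, and $\fl$ separately---matches the paper's strategy, and your treatment of $Dx = x$ and of the cone invariance at the end is correct. However, the choice of $h$ is genuinely different and introduces a gap. You take $h = \tfrac{1}{2}h_0$ for a Jacobson--Morozov triple $(x_\fl, h_0, y_0)$, whereas the paper takes $h_\fs$ to be an Euler element of the $\ft_\fl$-invariant subalgebra $\fb \cong \fsl_2(\R)^r$ built from strongly orthogonal restricted roots (the element supplied by \cite[Lemma~IV.7]{HNO94}). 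These two elements differ in general: when $x_\fl$ is a sum of $k < r$ strongly orthogonal root vectors, the Jacobson--Morozov $h_0/2$ has $\gamma_i(h_0/2) = 0$ for $i > k$, while an Euler element of $\fb$ has $\gamma_i = \pm 1$ there, so $h_0/2$ is \emph{not} an Euler element of $\fb$ and $\Spann\{x_\fl,h_0,y_0\}$ is only the diagonal $\fsl_2$ sitting inside $\fsl_2(\R)^k \subseteq \fb$.

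This is where your argument breaks: Lemma~\ref{lem:conv-module-euler-elmnt} requires the semisimple subalgebra to be invariant under a compactly embedded Cartan subalgebra $\ft_\fl$. The span $\fs_x = \Spann\{x_\fl, h_0, y_0\}$ is generally not $\ft_\fl$-invariant (for $t \in \ft_\fl$, $[t, x_\fl] = \sum_i \gamma_i(t)E_{\gamma_i}$ lies in $\fs_x$ only if all $\gamma_i(t)$ agree), so the lemma does not apply to $\fs_x$ and you cannot conclude that $2\ad h$ acts as an involution on $V_{\mathrm{eff},\fs_x}$. The paper avoids this precisely by applying Lemma~\ref{lem:conv-module-euler-elmnt} to the root subalgebra $\fb$, which is $\ft_\fl$-invariant by construction, with the Euler element $h_\fs \in \fb$. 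Your conclusion about $\Spec(\ad h\vert_V)$ is in fact true (it amounts to $(\ad x_\fl)^2\vert_V = 0$, which holds for nilpotents in the cone), but it needs a different justification than the one you gave---either switch to the paper's $h_\fs$ and $\fb$, or argue directly that the $\fsl_2$-module $V$ has highest weights $\leq 1$ for the JM triple. Similarly, your ``depth $\leq 2$'' argument for the $\fl$-spectrum is a correct consequence of the classification in \cite{HNO94}, but you should cite the concrete statement rather than gesture at it; the paper goes straight to \cite[Lemma~IV.7]{HNO94}, which hands over $h_\fs$ with the required spectrum in one step.
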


Recall that, by Corollary~\ref{cor:1.7}, 
any $\ad$-nilpotent element $x$ with $C_x$ pointed is 
conjugate to an element of $\fz + \fs$. 

\begin{prf} By Corollary~\ref{cor:1.7}, 
$x_\fs := x_\fl \in \fs = [\fl,\fl]$. 
The nilpotent elements $x_\fs \in \fs$ is contained in the sum of 
all non-compact (hence hermitian) simple ideals 
and $C_{x_\fs}$ is pointed  because the pointedness of 
\[ \co_\fs(x_\fs) \subeq \co(x_\fs) = \co(x) - x_\fz \] 
implies that $C_{x_\fs} \subeq \fs$ is pointed (Proposition~\ref{prop:3.14}). 
Now the classification of nilpotent elements in invariant cones 
implies the existence of an $\ad$-diagonalizable element $h_\fs \in \fs$ with 
\begin{equation}
  \label{eq:spechs}
[h_\fs,x_\fs] = x_\fs \quad \mbox{  and } \quad 
 \Spec(\ad h_\fs) = \big\{ 0, \pm \shalf, \pm 1\big\}
\end{equation}
(\cite[Lemma~IV.7]{HNO94}). 
The element $h_\fs$ is obtained by using a system of strongly orthogonal 
restricted roots to show that $x_\fs$ is contained in a subalgebra 
$\fb \cong \fsl_2(\R)^r$ in which $h_\fs$ is an Euler element 
(but in general not in $\fs$ or $\g$).
For any element $h_\fs \in \fs$ with $[h_\fs,x_\fs] = x_\fs$ 
(cf.\ Remark~\ref{rem:jac-mor}), the derivation 
\[ D := D_{\rm can} + \ad h_\fs \] 
(see \eqref{eq:DV1}) 
then satisfies 
\[ Dx  = D(x_\fz + x_\fs) = x_\fz + [h_\fs,x_\fs] = x_\fz + x_\fs = x.\] 
As $D_{\rm can} h_\fs = 0$, the derivation $D$ is diagonalizable 
because both summands commute and are diagonalizable. 
The eigenvalues on $\fs$ are contained in $\big\{ 0, \pm \shalf, \pm 1\big\}$ 
by \eqref{eq:spechs} and $\fz \subeq \ker(D - \1)$. 

Let 
\[ V = V_0(h_\fs) + V_{1/2}(h_\fs) + V_{-1/2}(h_\fs) \] 
denote the $h_\fs$-eigenspaces in $V$ 
(Lemma~\ref{lem:conv-module-euler-elmnt}). 
Then the corresponding eigenvalues of $D$ on $V$ are 
$\shalf, 1$ and $0$. This completes the proof of the first assertion.  

If the invariant cone $W \subeq \g$ 
is generated by $W_\fl$ and 
a central cone $W_\fz$, then the invariance of both cones 
under $e^{\R D_{\rm can}}$ implies that $e^{\R D_{\rm can}}W = W$, 
hence that $e^{\R D}W = W$ follows from $D \in D_{\rm can} + \ad\g$. 
\end{prf}

\begin{rem} \mlabel{rem:Euler-suppl}
If the ideal $\fs'$ of $\fs$ generated by a nilpotent 
element $x_\fs \in \fs$ contains only simple summands of tube type, then 
their restricted root systems are of type $(C_r)$ and never 
of type $(BC_r)$ (\cite[pp.~587-588]{HC56}). Therefore 
\cite[Lemma~IV.7]{HNO94} actually provides an 
Euler element $h_\fs$ of $\fs'$, and hence also of $\fs$. 
\end{rem}

\subsection{Euler derivations} 

\begin{defn} We call $D \in \der(\g)$ an {\it Euler derivation} 
if $D$ is diagonalizable with 
\[ \Spec(D) \subeq \{-1,0,1\}.\] 
\end{defn}

In this section we ask, for a nilpotent element $x$ 
for which $C_x$ is pointed for an Euler derivation $D$ satisfying $Dx = x$. 
Recall that the latter relation implies that $x$ 
is nilpotent (cf.\ Definition~\ref{def:4.1}).

\begin{rem} If $\g$ is simple hermitian, 
then every derivation of $\g$ is inner and 
an Euler derivation exists if and only if $\g$ is of tube type 
(\cite[Prop.~3.11(b)]{MN21}). 
Therefore Euler derivations $D$ with $Dx = x$ need not exist. 
Concrete examples of hermitian Lie algebras without 
Euler derivations are $\su_{p,q}(\C)$ for $p \not=q$. 
If, however, $\g$ is hermitian of tube type, then 
there exists an Euler element $h$ 
with $[h,x] = x$ (Remark~\ref{rem:Euler-suppl}). 
\end{rem}

\begin{rem} \mlabel{rem:struc-euler-der}
Let $\g = \g(\fl,V,\fz,\beta)$ be an admissible Lie algebra 
and $D'\in \der(\g)$ an Euler derivation. Then $D'$ is conjugate 
under $e^{\ad \fu} = e^{\ad V}$ to an Euler derivation $D$ such that 
\[ D(\fl) \subeq \fl, \quad D(\fz(\fl)) = \{0\} \] 
(\cite[Thm.~8.1.5]{Oeh21}). Then $D(\fz) \subeq \fz$ and $D(\fu) \subeq \fu$ 
imply that 
\[ D(V)= D([\fl,\fu]) 
\subeq [D\fl,\fu] + [\fl, D\fu] \subeq [\fl,\fu] = V.\] 
Hence there exists an element $h \in \fs := [\fl,\fl]$ and endomorphisms 
$D_\fz \in \End(\fz)$ and $D_V \in \End(V)$ with 
\[ D(z,v,x) = (D_\fz z, D_V v + [h,v], [h,x]) \quad \mbox{ for }\quad 
(z,v,x) \in \g.\]
It follows in particular that $D$ preserves all ideals of $\fl$. 
\end{rem}

\begin{prop}
  Let \(\g\) be an admissible non-reductive Lie algebra with 
$\fz(\g) \subeq [\g,\g]$. 
If $h$ is an Euler element of $\g$, then $[h,\fr] = \{0\}$. 
\end{prop}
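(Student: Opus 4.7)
Since $\fl$ is reductive, the solvable radical of $\g$ is $\fr = \fz + V + \fz(\fl)$, and $[h,\fz] = 0$ is automatic because $\fz = \fz(\g)$. My plan is to reduce $h$ to a canonical form and then verify the identity on each of the three summands. By Remark~\ref{rem:struc-euler-der} applied to the inner Euler derivation $D = \ad h$, we may replace $h$ by a conjugate under $e^{\ad V} \subseteq \Inn(\g)$; since $\fr$ is a characteristic ideal, such a replacement leaves the property $[h,\fr] = 0$ unchanged, and the Remark provides the form $D(z,v,x) = (D_\fz z, D_V v + [h',v], [h',x])$ with $h' \in \fs := [\fl,\fl]$. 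Because $D$ is inner and $\fz = \fz(\g)$, one has $D_\fz = 0$; because $D$ preserves $V$ while the $\fz$-component of $[h_V,v] + [h_\fl,v]$ is $\beta(h_V,v)$, the non-degeneracy of $\beta$ (Corollary~\ref{cor:brack-nondeg}) forces the $V$-component of the conjugated element to vanish. Altogether we may assume
\[ h = h_\fz + h_\fs + h_0, \qquad h_\fz \in \fz,\ h_\fs \in \fs,\ h_0 \in \fz(\fl). \]
Then $[h,\fz(\fl)] = 0$ is immediate from $[\fs,\fz(\fl)] = 0$ (reductivity of $\fl$) and abelianness of $\fz(\fl)$, so only $[h,V] = (\ad_V h_\fs + \ad_V h_0)V = 0$ remains to be shown.

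The main step is a joint spectral analysis on $V_\C$. The operator $\ad_V h_0$ is diagonalizable with purely imaginary eigenvalues, because $h_0 \in \fz(\fl) \subseteq \ft_\fl$ is elliptic. To control $\ad_V h_\fs$, decompose $\fs = \bigoplus_j \fs_j$ into simple ideals and $h_\fs = \sum_j h_{\fs,j}$ accordingly. Each $\fs_j$ with $h_{\fs,j} \neq 0$ contains the Euler element $h_{\fs,j}$ and is therefore hermitian of tube type; moreover the $\pm 1$-eigenspaces of $h_{\fs,j}$ generate $\fs_j$ via the standard parabolic description. Lemma~\ref{lem:conv-module-euler-elmnt}(b) applied to the subalgebra $\fs' := \bigoplus_{h_{\fs,j} \neq 0} \fs_j$ then yields that $2\ad h_\fs$ defines an antisymplectic involution on $V_{\rm eff,\fs'}$, so $\ad_V h_\fs$ is diagonalizable with real eigenvalues in $\{-\tfrac{1}{2}, 0, \tfrac{1}{2}\}$, acting as $0$ on the complementary $\fs'$-fixed part.

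The commuting operators $\ad_V h_\fs$ and $\ad_V h_0$ therefore admit a joint eigenspace decomposition of $V_\C$ with eigenvalue pairs $(\alpha, i\mu)$, $\alpha \in \{-\tfrac{1}{2},0,\tfrac{1}{2}\}$, $\mu \in \R$. Since $h$ is Euler, the sum $\alpha + i\mu$ lies in $\{-1,0,1\} \subseteq \R$, which forces $\mu = 0$ and then $\alpha \in \{-1,0,1\} \cap \{-\tfrac{1}{2},0,\tfrac{1}{2}\} = \{0\}$. Hence $\ad_V h_\fs = \ad_V h_0 = 0$ and $[h,V] = 0$, completing the proof.

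The delicate point is the spectral collision in the last step: the half-integer eigenvalues forced by Lemma~\ref{lem:conv-module-euler-elmnt}(b) are incompatible with the integer Euler spectrum, which is precisely what kills the $\fs$- and $\fz(\fl)$-components of the action on $V$. The hypothesis $\fz(\g) \subseteq [\g,\g]$ does not appear to enter my argument directly; I suspect it is invoked either to rule out decomposable central summands (where the statement is trivial) or to guarantee that the reduction in Remark~\ref{rem:struc-euler-der} applies to $\ad h$ without an unwanted ambiguity from a central part of $h$ not controlled by~$[\g,\g]$.
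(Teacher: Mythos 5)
Your proposal is correct and follows essentially the same route as the paper: the same structural reduction via Remark~\ref{rem:struc-euler-der} (i.e.\ [Oeh21, Thm.~8.1.5]), the same elimination of the $V$-component of $h$, and the same appeal to Lemma~\ref{lem:conv-module-euler-elmnt}(b) to produce a spectral clash between the half-integer eigenvalues of $\ad_V h_\fs$ and the integer Euler spectrum. You merely package the killing of the $\fz(\fl)$-part and the $\fs$-part into a single joint eigenspace decomposition of $V_\C$, whereas the paper disposes of $h_0\in\fz(\fl)$ first and then invokes the lemma to force $V_{{\rm eff},\fs'}=\{0\}$; your suspicion about the role of $\fz(\g)\subseteq[\g,\g]$ is also right—it is a hypothesis of the cited result of [Oeh21].
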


\begin{proof}   By \cite[Thm.\ 8.1.5]{Oeh21}, we may assume that 
\(\g = \g(\fl, V, \fz, \beta)\) with $[h,\fl] \subeq \fl$. 
We show that this implies that $h \in \fz + \fs$ for $\fs = [\fl,\fl]$. 
Write $h = h_\fz + h_V + h_\fl$. 
Then $[h,\fl] \subeq \fl$ implies that 
\[ [h_V,\fl] \subeq \fl \cap [V,\fl] \subeq \fl \cap V = \{0\}.\] 
As $\fz_V(\fl) \subeq \fz_V(\ft_\fl)= \{0\}$ 
(Theorem~\ref{thm:spind}(d)), this implies $h_V = 0$, i.e., 
$h \in \fz + \fl$ and therefore $\ad h \in \ad(\fl)$. 
We may therefore assume that $h \in \fl$. 

Write $h= h_0 + h_1$ with $h_0 \in \fz(\fl)$ and $h_1 \in \fs$. 
These two summands commute and $h_1$ is an Euler element in $\fs$. 
As elements of $\fz(\fl)$ acts with purely imaginary spectrum on $V$, 
we further obtain $h_0 = 0$, so that $h \in \fs$. 

As the radical $\fr$ of \(\g\) is 
\[ \fr = \fz + V + \fz(\fl) = [V,V] + V + \fz(\fl)\] 
and $[h,\fz(\fl)] = \{0\}$, it remains to show that $[h,V] = \{0\}$. 
%So let us assume that this is not the case. 
  Let $f \in \fz^*$ be 
such that \((V,f \circ \beta)\) is a symplectic \(\fl\)-module of convex type.
  By applying Lemma \ref{lem:conv-module-euler-elmnt} to the ideal \(\fs' \subset \fl\) generated by \(\fs_{\pm 1}(h)\) and the \(\fl\)-module 
\((V_{\fs'}, f \circ \beta)\), 
we see that \(2\ad(h)\) acts by an involution on $V_{\fs'}$. 
As $h$ is an Euler element of $\g$, it follows that 
$V_{\fs'} = \{0\}$, hence that $[\fs',V] = \{0\}$, and 
thus in particular $[h,V] = \{0\}$. 
\end{proof}

\begin{ex} Let $(V,\omega)$ be a finite dimensional symplectic vector space. 
We consider $\g = \g(\sp(V,\omega), V, \R, \omega)$ % and $\g + \R \id_V$. 
Then the Levi complement is 
$\sp(V,\omega)$, and all Euler elements $h$ in $\sp(V,\omega)$ are conjugate to 
$\shalf \tau$, where $\tau$ is an antisymplectic involution on $V$ 
(\cite[Prop.~3.11(b)]{MN21}).
Therefore the eigenvalues of $h$ on $\g$ contain $\pm \shalf$ 
and both Lie algebras contain no Euler element. 
\end{ex}

The following result shows in particular 
that solvable admissible Lie algebras 
do not even possess Euler derivations. 

\begin{prop}
  Let \(\g\) be a solvable admissible Lie algebra with \(\fz(\g) \subset [\g,\g]\). Then there exists no non-zero Euler derivation on \(\g\).
\end{prop}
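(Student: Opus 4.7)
The plan is to exploit the Spindler decomposition $\g = \g(\fl,V,\fz,\beta)$ from Theorem~\ref{thm:spind}. The key initial observation is that $\fl$ is reductive by that theorem, so if $\g$ is solvable then $\fl$ is simultaneously reductive and solvable, hence abelian. This is the structural fact that makes everything work.

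I would then argue by contradiction: suppose $D$ is a non-zero Euler derivation. Using Remark~\ref{rem:struc-euler-der}, I would replace $D$ by an $e^{\ad V}$-conjugate so that $D(\fl) \subseteq \fl$ and $D|_{\fz(\fl)} = 0$. Abelianness of $\fl$ then forces $\fz(\fl) = \fl$, hence $D|_\fl = 0$, and the element $h \in \fs = [\fl,\fl] = 0$ appearing in the remark's formula vanishes. Thus $D$ preserves $\fz$ and $V$ separately. Decomposing into $D$-eigenspaces $V = V_{-1} \oplus V_0 \oplus V_1$ and $\fz = \fz_{-1} \oplus \fz_0 \oplus \fz_1$, the derivation property yields $\beta(V_i, V_j) \subseteq \fz_{i+j}$; since $\fz_k = 0$ for $|k| \geq 2$, this forces $\beta(V_{\pm 1}, V_{\pm 1}) = 0$.

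The heart of the argument is to use admissibility to kill $V_{\pm 1}$. Since $D|_\fl = 0$, the derivation $D$ commutes with $\ad\fl$, so each $V_i$ is $\fl$-invariant. Theorem~\ref{thm:spind}(e) provides $x_0 \in \fl$ and $f \in \fz^*$ with $H_{x_0}^f(v) = f(\beta(x_0.v,v))$ positive definite on $V$. For $v \in V_1$, the element $x_0.v$ stays in $V_1$, so $\beta(x_0.v, v) \in \beta(V_1, V_1) = \{0\}$ and hence $H_{x_0}^f$ vanishes on $V_1$; positive definiteness then forces $V_1 = 0$, and the identical argument gives $V_{-1} = 0$, so $D|_V = 0$. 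I expect this to be the most delicate step: it critically uses both the grading (from $D$ being Euler) and the positive-definite Hamiltonian (from admissibility), and fails without either ingredient.

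To close the argument I would invoke the hypothesis $\fz(\g) \subseteq [\g,\g]$: since $\fl$ is abelian, $[\g,\g] = \beta(V,V) + [\fl,V]$, whose $\fz$-component is $\beta(V,V)$, so $\fz = \beta(V,V)$. The derivation identity then gives $D\beta(v,w) = \beta(Dv,w) + \beta(v,Dw) = 0$, i.e.\ $D|_\fz = 0$; combined with $D|_V = 0$ and $D|_\fl = 0$, we obtain $D = 0$, contradicting our assumption.
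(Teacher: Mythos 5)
Your proof is correct and follows essentially the same path as the paper's: reduce to $\g = \g(\fl,V,\fz,\beta)$ with $\fl$ abelian, observe that $D$ must vanish on $\fl$ and therefore splits as $D_\fz \oplus D_V$, then use the positive-definite Hamiltonian from admissibility together with the grading constraint $\beta(V_i,V_j)\subseteq\fz_{i+j}$ to kill $V_{\pm1}$, and finally invoke $\fz = [V,V]$ to conclude $D_\fz = 0$. The only small cosmetic differences are that you deduce $D\vert_\fl = 0$ directly from $D(\fz(\fl)) = \{0\}$ in Remark~\ref{rem:struc-euler-der} (bypassing the paper's appeal to an external theorem of Oeh), and that you unwind the paper's phrase ``each eigenspace $V_\lambda(D_V)$ is a symplectic $\fl$-module of convex type'' into the explicit positive-definiteness argument $H_{x_0}^f(v) = f(\beta(x_0.v,v)) = 0$ for $v \in V_{\pm1}$.
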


\begin{proof}
  Let \(D \in \der(\g)\) be an Euler derivation. 
By \cite[Thm.\ 8.1.5]{Oeh21}, we may assume that 
\(\g = \g(\fl, V, \fz, \beta)\) as in Theorem~\ref{thm:spind} 
with $D(\fl) \subeq \fl$. 
As $\g$ is solvable, the Lie algebra $\fl$ is abelian, 
so that \cite[Thm.\ 2.27]{Oeh20b} shows that \(D(\fl) = \{0\}\). 
In particular, \(V = [\fl, \fu]\) is \(D\)-invariant, so that
  \[D(z,v,x) = (D_\fz z, D_V v, 0) \quad \text{for } (z,v,x) \in 
\fz \times V \times \fl = \g,\]
  where \(D_V \in \End_\fl(V)\) is 
diagonalizable. Since \(\g\) is admissible, there exists \(f \in \fz^\star\) such that \((V, f \circ \beta)\) is a symplectic \(\fl\)-module of convex type.
  Hence, every $D_V$-eigenspace $V_\lambda(D_V) \subeq V$  
defines a symplectic \(\fl\)-module 
\((V_\lambda(D_V), f \circ \beta)\) 
of convex type. In particular we have 
\(\beta(V_\lambda(D_V), V_\lambda(D_V)) \neq \{0\}\) and, since \(D\) is a derivation,
  \[\{0\} \neq \beta(V_\lambda(D_V), V_\lambda(D_V)) = [V_\lambda(D_V),V_\lambda(D_V)] \subset \fz_{2\lambda}(D_\fz).\]
  Since \(\spec(D) \subset \{-1,0,1\}\), this implies \(\lambda = 0\) and 
therefore \(D_V = 0\). As $\fz(\g) = [V,V]$, it follows that  $D = 0$.
\end{proof}

Let \(\g = \g(\fl,V,\fz,\beta)\) be admissible. For \(h \in \fl\), we recall 
the subspaces 
\[V_h = [h,V] \quad \text{and} \quad V_{h,0} = \{v \in V : h.v = 0\} = V_h^{\bot_\beta}\]
(cf.\ Lemma~\ref{lem:v0v1}).

\begin{lem} \mlabel{lem:eulder}
Let $\g = \g(\fl,V,\fz,\beta)$ be admissible with 
$\fz \subeq [V,V]$ and $h \in [\fl,\fl]$ an Euler element. Then there exists 
a non-zero Euler derivation $D$ on $\g$ with $D(\fl) \subeq \fl$ and 
$D\res_{\fl} = \ad_\fl h$ if and only if 
there exists an $\fl$-module 
decomposition $V_h = V_h^+ \oplus V_h^-$ such that
the sum of the three subspaces 
$[V_h^+,V_h^+], [V_h^-,V_h^-]$,  and  \([V_h^+,V_h^-] + [V_{h,0},V_{h,0}]\) of 
$\fz$ is direct. 
\end{lem}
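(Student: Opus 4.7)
The plan is to parametrize Euler derivations $D$ on $\g$ with $D|_\fl = \ad_\fl h$ in terms of data on $V$ and $\fz$, and translate the existence question into the stated decomposition of $V_h$. By Remark~\ref{rem:struc-euler-der}, any Euler derivation preserving $\fl$ has the form
\[
 D(z,v,x) = (D_\fz z,\ D_V v + [h',v],\ [h',x])
\]
with $h' \in [\fl,\fl]$, $D_V \in \End_\fl(V)$, and $D_\fz \in \End(\fz)$. Imposing $D|_\fl = \ad_\fl h$ gives $\ad_\fl(h - h') = 0$, whence $h - h' \in \fz([\fl,\fl]) = \{0\}$ (since $[\fl,\fl]$ is semisimple), so $h' = h$. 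The derivation identities reduce to $\fl$-equivariance of $D_V$ together with the key relation
\[
 D_\fz\,\beta(v,w) = \beta(D_V v, w) + \beta(v, D_V w) \qquad (v,w \in V),
\]
the contributions of $[h,\cdot]$ on the right cancelling via $[h,\beta(v,w)] = 0$ because $\fl$ centralizes $\fz \subeq \fz(\g)$.

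For the forward direction, assume such a $D$ exists. Let $\fs' \trile \fl$ be the ideal generated by $\fs_{\pm 1}(h)$ with $\fs = [\fl,\fl]$. Lemma~\ref{lem:conv-module-euler-elmnt}(b), applied to the symplectic $\fl$-module $(V, f \circ \beta)$ for $f \in \fz^*$ as in Theorem~\ref{thm:spind}(e), yields the $\fl$-module decomposition $V = V_{\rm eff,\fs'} \oplus V_{\rm fix,\fs'}$ with $2\ad h$ acting as an antisymplectic involution on $V_{\rm eff,\fs'}$. Hence $V_h = V_{\rm eff,\fs'}$, $V_{h,0} = V_{\rm fix,\fs'}$, $\ad h$ has eigenvalues $\pm\shalf$ on $V_h$, and $\beta(V_h, V_{h,0}) = 0$ follows from the identity $0 = x.\beta(v,w) = \beta(x.v,w) + \beta(v,x.w)$ for $x \in \fs'$. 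Since $D_V$ commutes with the $\fl$-action, each irreducible $\fl$-submodule $M \subeq V_h$ is $D_V$-invariant and meets both $\pm\shalf$-eigenspaces of $\ad h$ (otherwise $\fl_{\pm 1}$ would annihilate $M$, forcing $M \subeq V_{h,0}$); combined with $\Spec(D|_V) \subeq \{-1,0,1\}$ this forces $D_V|_M$ to be scalar multiplication by $\pm\shalf$. Define $V_h^\pm$ as the $\pm\shalf$-eigenspaces of $D_V$ in $V_h$; these are $\fl$-submodules. The derivation identity then gives $[V_h^+,V_h^+] \subeq \fz^{+1}$, $[V_h^-,V_h^-] \subeq \fz^{-1}$, and $[V_h^+,V_h^-] + [V_{h,0},V_{h,0}] \subeq \fz^{0}$, where $\fz^\lambda$ is the $\lambda$-eigenspace of $D_\fz$; since these sit in distinct $D_\fz$-eigenspaces, their sum in $\fz$ is direct.

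For the converse, assume the decomposition and the directness of the sum. Define $D_V \in \End_\fl(V)$ by $\shalf\cdot\id$ on $V_h^+$, $-\shalf\cdot\id$ on $V_h^-$, and $0$ on $V_{h,0}$; this is $\fl$-equivariant since all three are $\fl$-submodules. Using $\beta(V_h, V_{h,0}) = 0$ and $\fz = [V,V]$, one has $\fz = [V_h^+,V_h^+] + [V_h^-,V_h^-] + \big([V_h^+,V_h^-] + [V_{h,0},V_{h,0}]\big)$, and the directness hypothesis allows one to define $D_\fz$ by declaring it to be $+1$, $-1$, and $0$ on the respective summands. Set $D(z,v,x) := (D_\fz z,\, D_V v + [h,v],\, [h,x])$. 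The derivation identities are immediate on $\fl\times\fl$, reduce to $\fl$-equivariance of $D_V$ and Jacobi on $\fl\times V$, and hold on $V\times V$ by the case analysis matching the eigenvalue assignments of $D_\fz$. A direct inspection on the joint eigenspaces $V_h^\pm \cap V_{\pm 1/2}(\ad h)$ and on $V_{h,0}$ shows $\Spec(D) \subeq \{-1,0,1\}$, and $D \ne 0$ because $\ad_\fl h \ne 0$.

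The main obstacle I anticipate is the structural identification $V_h = V_{\rm eff,\fs'}$, $V_{h,0} = V_{\rm fix,\fs'}$ together with the symplectic orthogonality $\beta(V_h, V_{h,0}) = 0$: these rely on choosing the correct ideal $\fs' \trile \fl$ and applying Lemma~\ref{lem:conv-module-euler-elmnt} carefully, exploiting that $2\ad h$ is an involution on $V_{\rm eff,\fs'}$. Once these structural facts are in place, and the eigenvalue restriction $\Spec(D_V|_{V_h}) \subeq \{\pm\shalf\}$ is extracted from the irreducibility argument, the remaining bookkeeping on eigenspaces and derivation identities is routine.
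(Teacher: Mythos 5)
Your forward direction has a genuine gap. Having set $V_h^\pm := V_{\pm 1/2}(D_V)\cap V_h$, you assert that the derivation identity gives $[V_{h,0},V_{h,0}] \subeq \fz_0(D_\fz)$; but since $D_\fz\beta(v,w)=\beta(D_Vv,w)+\beta(v,D_Vw)$, this requires $D_V\res_{V_{h,0}}=0$, which you never establish. Your eigenvalue argument (that $D_V\res_M$ is scalar $\pm\shalf$ on an irreducible $M$ meeting both $\ad h$-eigenspaces) only applies inside $V_h$; on $V_{h,0}$ we have $\ad h=0$, so $D\res_{V_{h,0}}=D_V\res_{V_{h,0}}$ and the Euler condition yields only $\Spec(D_V\res_{V_{h,0}})\subeq\{-1,0,1\}$. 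Without $D_V\res_{V_{h,0}}=0$, the three summands need not land in distinct $D_\fz$-eigenspaces, and the directness conclusion collapses. The paper obtains $\ker D_V = V_{h,0}$ from the Classification Theorem \cite[Thm.~3.14]{Oeh20b}; you replace that citation by a direct argument but leave out exactly this piece.

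The gap is closable by the same device the paper uses in its proposition on solvable admissible Lie algebras: since $\beta(V_h,V_{h,0})=0$, the pair $(V_{h,0},f\circ\beta)$ is a symplectic $\fl$-module of convex type, so any nonzero $D_V$-eigenspace $U_\lambda \subeq V_{h,0}$ (an $\fl$-submodule) satisfies $\{0\}\ne\beta(U_\lambda,U_\lambda)\subeq\fz_{2\lambda}(D_\fz)$, forcing $2\lambda\in\{-1,0,1\}$; combined with $\lambda\in\{-1,0,1\}$ this gives $\lambda=0$, hence $D_V\res_{V_{h,0}}=0$. If you add this step the forward direction is correct. Two minor points: the irreducible $\fl$-submodules $M\subeq V_h$ need not be individually $D_V$-invariant (only isotypic components, or better the $D_V$-eigenspaces, are), but the argument survives if phrased in terms of eigenspaces of $D_V$; and the converse direction you give matches the paper's essentially verbatim, so that part is fine.
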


\begin{prf} If $D$ exists, then the invariance of 
$\fl$ implies $D(V) \subeq V$ (Remark~\ref{rem:struc-euler-der}) 
Therefore $D$ is of the form 
\begin{equation}
  \label{eq:deriv}
D(z,v,y) := (D_\fz z, [h,v] + D_V v, [h,y])
\quad \mbox{ for }  \quad (z,v,y) \in \fz \times V \times \fl,
\end{equation}
where $D_V \in \End_\fl(V)$. Further, the Classification Theorem 
\cite[Thm.\ 3.14]{Oeh20b} implies that 
 \(\ker(D_V) = V_{h,0}\), and that \(2h\) acts as 
an involution on 
\[ V_h = V_{1/2}(D_V) + V_{-1/2}(D_V).\] 
  We set \(V_h^{\pm} := V_{\pm 1/2}(D_V)\).
Then the assertion follows from 
\[[V_{h,0},V_{h,0}] + [V_h^+,V_h^-] 
= \fz_0(D_\fz) \quad \text{and} \quad [V_h^\pm,V_h^\pm] = \fz_{\pm 1}(D_\fz).\]

Conversely, suppose that the sum of 
$[V_h^+,V_h^+], [V_h^-,V_h^-]$  and  \([V_h^+,V_h^-] + [V_{h,0},V_{h,0}]\) 
is direct. Since \(h\) is an Euler element, the subalgebra 
$\fs_h \subeq \fl$ generated by $\fl_{\pm 1}(h)$ 
is a semisimple ideal of \(\fl\) 
which is a direct sum of hermitian simple ideals of tube type 
(\cite[Prop.~3.11(b)]{MN21}).
By Lemma \ref{lem:conv-module-euler-elmnt}, the element \(2h\) acts on 
\(V_{{\rm eff},\fs_h} = V_h\) as an involution. We define
\[D_V := \big(\tfrac{1}{2}\id_{V_h^+}\big) \oplus 
\big(- \tfrac{1}{2}\id_{V_h^-}\big) \oplus 0\cdot \id_{V_{h,0}}.\]
Then \(D_V\) commutes with the action of \(\fl\) on \(V\) because \(V_h^\pm\) and \(V_{h,0}\) are $\fl$-invariant ($\fl = \fs_h \oplus \fz_\fl(\fs_h)$).
Our assumption now implies that we can define an endomorphism \(D_\fz \in \End(\fz)\) by 
\[ \fz_{\pm 1}(D_\fz) = [V_h^\pm,V_h^\pm] 
\quad \mbox{ and } \quad 
\fz_0(D_\fz) = [V_{h,0},V_{h,0}] + [V_h^+,V_h^-].\] 
Then \eqref{eq:deriv} 
defines an Euler derivation of \(\g\) with 
$D(\fl) \subeq \fl$ and $D\res_\fl = \ad h$ 
(cf.\ \cite[Thm.\ 3.14]{Oeh20b}). 
\end{prf}

\begin{rem} \mlabel{rem:euler3}
(a) To see the subalgebra $\g_D \subeq \g$ 
generated by $\g_{\pm 1}(D)$ in 
the context of the preceding theorem, we first observe that 
\[ \g_0(D) = \fz_0(D_\fz) 
\oplus  V_{h,-1/2}^+(h)
\oplus  V_{h,1/2}^-(h)
\oplus  V_{h,0} \oplus  \fl_0(h).\] 
For the ideal $\fs_h \trile \fl$ generated by  $h$, 
we have $\g_D \cap \fl = \fs_h$. 
Next we observe that 
\[ [\fl_{\pm 1}(h), V^+_{h,\mp 1/2}(h)] = V^+_{h,\pm 1/2}(h).\] 
%\quad \mbox{ and } \quad 
%[\fl_{\pm 1}(h), V_{h,0}] = \{0\}.\] 
%We further have 
%\[ [V_1(D), V_{-1}(D)] 
%= [V_{h,1/2}^+(h), V_{h,-1/2}^-(h)] \subeq \fz_0(D_\fz).\] 
If $\g = \g_D$, then we must have  $\fl = \fs_h$, 
and then $V = [\fl,V]$ implies $V_{h,0} = \{0\}$. Conversely, 
$\fl = \fs_h$ and $V_{h,0} = \{0\}$ imply $\g = \g_D$ 
because in this case $\fz_0(D_\fz) = [V_h^+, V_h^-]$. 

\nin (b) If $\fl = \fs_h$ and, as a consequence, 
$V_{h,0} = \{0\}$, then we may put $V_h^+ := V$, which leads 
to  $D_V = \frac{1}{2} \id_V$  and $D_\fz = \id_\fz$, so that
$D_\fz \oplus D_V = D_{\rm  can}$. 
Thus $D = D_{\rm can} + \ad h$ is an Euler derivation of $\g$ 
with $\g_0(D) = [\g_1(D), \g_{-1}(D)]$. 
\end{rem}

\begin{thm} \mlabel{thm:eulderexist}{\rm(Existence of Euler affine pairs)} 
  Let \(\g = \g(\fl,V,\fz,\beta)\) be an admissible Lie algebra with 
$\fz = \fz(\g)\subeq [\g,\g]$ 
and let $x = x_\fz +x_V + x_\fs \in \g$ be a nilpotent 
element with $C_x$ pointed. 
Then there exists an Euler derivation \(D \in \der(\g)\) with \(Dx = x\) and 
$D(\fl) \subeq \fl$ 
if and only if there exists an Euler element $h$ of \(\fl\) 
and a decomposition $V_h = V_h^+ \oplus V_h^-$ into $\fl$-submodules 
such that
      \begin{enumerate}
        \item[\rm(a)] \([h,x_\fs] = x_\fs\),
        \item[\rm(b)] \(x_\fz \in [V_h^+, V_h^+]\), $x_V \in V_h^+$, and
        \item[\rm(c)] The sum of the three subspaces 
$[V_h^+,V_h^+], [V_h^-,V_h^-]$,  and  \([V_h^+,V_h^-] + [V_{h,0},V_{h,0}]\) of 
$\fz$ is direct. 
      \end{enumerate}
\end{thm}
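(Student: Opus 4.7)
The plan is to derive the theorem directly from Lemma~\ref{lem:eulder} by unpacking the equation $Dx=x$ componentwise. Lemma~\ref{lem:eulder} already characterizes, for a given Euler element $h\in[\fl,\fl]$, when there exists an Euler derivation $D$ of $\g$ with $D(\fl)\subseteq\fl$ and $D|_\fl=\ad_\fl h$, namely in terms of an $\fl$-module decomposition $V_h=V_h^+\oplus V_h^-$ satisfying the direct-sum condition~(c). The present theorem merely augments this with the requirement $Dx=x$, which will translate into conditions (a) and (b) on the components of $x=x_\fz+x_V+x_\fs$.

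For the forward direction, I start with such a $D$ and apply Remark~\ref{rem:struc-euler-der} (possibly after conjugation by some $e^{\ad u}$, $u\in V$, which preserves the Levi component $x_\fs$ while adjusting $x_\fz$ and $x_V$ in a controlled way) to bring $D$ into the explicit form
\[
 D(z,v,y) = (D_\fz z,\ D_V v + [h,v],\ [h,y]),
\]
with $h\in\fs:=[\fl,\fl]$ an Euler element of $\fl$, $D_\fz\in\End(\fz)$, and $D_V\in\End_\fl(V)$. Lemma~\ref{lem:eulder} then produces the decomposition $V_h^\pm:=V_{\pm 1/2}(D_V)$ satisfying~(c), together with the identifications $\fz_{\pm 1}(D_\fz)=[V_h^\pm,V_h^\pm]$ and $\fz_0(D_\fz)=[V_h^+,V_h^-]+[V_{h,0},V_{h,0}]$. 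Projecting $Dx=x$ onto $\fl$, $\fz$, and $V$ yields $[h,x_\fs]=x_\fs$, which is~(a); $x_\fz\in\fz_1(D_\fz)=[V_h^+,V_h^+]$; and $D_Vx_V+[h,x_V]=x_V$. Since the commuting operators $D_V$ and $\ad h$ have eigenvalues in $\{-\tfrac12,0,\tfrac12\}$ on $V$ and the Euler condition on $D$ forces the $\pm\tfrac12$-eigenspaces of $D|_V$ to vanish, the $+1$-eigenspace of $D|_V$ is $V_h^+\cap V_{1/2}(\ad h)$, giving $x_V\in V_h^+$ and establishing~(b).

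For the reverse direction, given $h$ and $V_h^\pm$ satisfying (a), (b), (c), I apply Lemma~\ref{lem:eulder} to construct the Euler derivation $D$ with $D|_\fl=\ad_\fl h$ and verify $Dx=x$ summand by summand: the $\fs$- and $\fz$-components follow immediately from~(a) and the first half of~(b), while the $V$-component reduces, via $D_Vx_V=\tfrac12 x_V$ (from $x_V\in V_h^+$), to the identity $[h,x_V]=\tfrac12 x_V$. The main obstacle is precisely this last eigenvalue compatibility: in general the $\fl$-submodule $V_h^+$ is not contained in the $\ad h$-eigenspace $V_{1/2}(\ad h)$, so the condition $x_V\in V_h^+$ in~(b) has to be read as carrying with it the implicit requirement $x_V\in V_{1/2}(\ad h)$ (which is automatic in the forward direction as noted above). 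Once this subtlety is handled, the theorem follows directly from the combined use of Lemma~\ref{lem:eulder} and the componentwise unpacking sketched above.
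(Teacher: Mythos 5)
Your overall strategy --- reducing the theorem to Lemma~\ref{lem:eulder} and then unpacking $Dx=x$ componentwise --- is exactly the paper's strategy, and your forward direction is correct (in fact slightly cleaner than the paper's: you get $x_V\in V_h^+$ from $x_V\in V_1(D\vert_V)=V_{1/2}(D_V)\cap V_{1/2}(\ad h)$ via simultaneous diagonalization, whereas the paper first derives $x_V\in V_{1/2}(h)$ from the Reduction Theorem and then combines with $Dx_V=x_V$ to get $D_Vx_V=\tfrac12 x_V$).

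However, your reverse direction has a genuine gap, and your proposed fix is wrong. You correctly observe that $V_h^+=V_{1/2}(D_V)$ is in general not contained in $V_{1/2}(\ad h)$ (indeed $V_h^+\cap V_{-1/2}(\ad h)$ can be nonzero --- cf.\ Remark~\ref{rem:euler3}(a), where $V_{h,-1/2}^+(h)$ appears in $\g_0(D)$), so hypothesis (b) alone does not give $[h,x_V]=\tfrac12 x_V$. But the remedy is not to ``read (b) as carrying an implicit requirement'' --- that would silently strengthen the theorem's hypotheses and change the statement. The statement is correct as written: the missing fact $x_V\in V_{1/2}(h)$ follows from the \emph{standing} hypothesis that $C_x$ is pointed, together with condition (a). Concretely: pointedness of $C_x$ (hence of $\co(x)$) gives $x_V\in[x_\fs,V]$ via the proof of the Reduction Theorem~\ref{thm:conj1}; then (a) gives $[x_\fs,V_\lambda(h)]\subeq V_{\lambda+1}(h)$, and since $\ad h$ has spectrum $\{-\tfrac12,0,\tfrac12\}$ on $V$ (Lemma~\ref{lem:conv-module-euler-elmnt}(b)), it follows that $[x_\fs,V]=[x_\fs,V_{-1/2}(h)]\subeq V_{1/2}(h)$, so $x_V\in V_{1/2}(h)$. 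This is precisely the argument the paper uses in the forward direction and invokes again (silently) in the reverse direction; you need it explicitly in the reverse direction to close the gap, and it is the only place where the hypothesis that $C_x$ is pointed actually does work in the ``if'' implication.
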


%\bred{Do we really need the reduction to the case where $\fl$ 
%is effective on $V$? In \cite[Thm.~3.14]{Oeh20b} this is not required.}

\begin{proof} 
%First we reduce to the case where $\fl$ acts effectively on $V$. 
%If this is not the case, we write $\g$ as the direct sum 
%\[ \g = \g_1 \oplus \fl_0 \quad \mbox{ with }  \quad 
%\fl_0 = \fz_\fl(V) \quad \mbox{ and } \quad 
%\g_1 \cong \g(\fl_1, V, \fz,\beta),\] 
%where $\fl_1$ acts effectively on $V$. 
%Now let $x = x_\fz + x_{\fs,0} + x_{\fs,1}$ be nilpotent with $C_x$ pointed. 
%Then $C_{x_{\fs,0}}$ and $C_{x_{\fs,1}}$ are also pointed because 
%$x_{\fs,0}$ and $x_{\fs,1}$ are contained in a sum of hermitian simple 
%ideals (cf.\ Proposition~\ref{prop:simpleherm}). 
%Let $x^1 := x_\fz + x_{\fs,1}$ and $x^0 := x - x^1$. 
%The nilpotence of $x^0$ implies that 
%$C_{x^1} \subeq C_x$, and hence that this cone is also pointed. 
%If $D = D_\fz + D_V + \ad h$ is a derivation with $D(\fl) \subeq \fl$ 
%as in Remark~\ref{rem:struc-euler-der}, 
%then $D$ leaves $\g_1$ and $\fl_0$ invariant. We write 
%$D = D^1 + D^0$ for the corresponding restrictions to $\g_1$ and $\fl_0$. 
%Then $Dx = x$ is equivalent to $D^1 x^1 = x^1$ and $D^0 x^0 = x^0$. 
%Moreover, $D$ is Euler if and only if both components 
%$D^1$ and $D^0$ are Euler. 
%As $D^0$ acts trivially on $V$, we may assume that $\g= \g_1$, i.e., 
%that \(\fl\) acts effectively on \(V\). 
Suppose first that an Euler derivation \(D\) with \(Dx = x\) and 
$D(\fl) \subeq \fl$ exists.
  Then, by \cite[Thm.\ 3.14]{Oeh20b} (here we use that 
$\fz= \fz(\g) \subeq [\g,\g]$),  
we have $D(\fz(\fl)) = \{0\}$  and there exists an 
Euler element $h \in [\fl,\fl]$ such that 
  \[D(z,v,y) := (D_\fz z, [h,v] + D_V v, [h,y])
\quad \mbox{ for }  \quad (z,v,y) \in \fz \times V \times \fl.\]
By Lemma~\ref{lem:eulder} and its proof, 
the $\fl$-submodules \(V_h^{\pm} := V_{\pm 1/2}(D_V)\) 
satisfy (c). Further, (a) is the $\fl$-component of $Dx = x$, 
and the first part of (b) follows from
  \[x_\fz \in \fz_1(D_\fz) = [V_{1/2}(D_V),V_{1/2}(D_V)] = [V_h^+,V_h^+] \]
(cf.\ \cite[Thm.\ 3.14]{Oeh20b}). 
For the second part, we recall from the proof of the 
Reduction Theorem~\ref{thm:conj1} that 
$x_V \in [x_\fs,V]$. 
As $[x_\fs, V_\lambda(h)] \subeq V_{\lambda +1}(h)$ by (a), we have 
\[ [x_\fs,V] 
= [x_\fs, V_{-1/2}(h)+ V_0(h) + V_{1/2}(h)] 
= [x_\fs, V_{-1/2}(h)] 
\subeq V_{1/2}(h),\] 
hence in particular $x_V \in V_{1/2}(h)$. 
Eventually, $Dx = x$ entails 
%$x_V = D_V x_V + [h,x_V] = D_V x_V + \frac{1}{2}x_V$, so that 
$x_V \in V_{1/2}(D_V) = V_h^+$. 

Conversely, suppose that (a)-(c)  hold 
and construct the Euler derivation as in 
Lemma~\ref{lem:eulder} such that 
\[D_V := \big(\tfrac{1}{2}\id_{V_h^+}\big) \oplus 
\big(- \tfrac{1}{2}\id_{V_h^-}\big) \oplus 0\id_{V_{h,0}}
\quad \mbox{ and } \quad 
\fz_{\pm 1}(D_\fz) = [V_h^\pm,V_h^\pm].\] 
Then $D_\fz x_\fz = x_\fz$ by the first part of (b),
and the second part of  (b), combined with 
$x_V \in D_{1/2}(h)$ implies that 
\[ Dx_V = D_V x_V +[h,x_V] = \frac{1}{2} x_V +  \frac{1}{2} x_V = x_V.\]
Thus $Dx = x$ follows from (a).
\end{proof}

Here is an example of a rather small admissible Lie algebra that 
already displays the complexity of the situation we encounter in 
Theorem~\ref{thm:eulderexist}. 

\begin{ex} For the Lie algebra $\fl := \R z\oplus \fsl_2(\R) 
\cong \gl_2(\R)$, we consider the representation $\rho$ on 
\[ V := V_1 \oplus V_2 \oplus V_3, \quad V_j \cong \R^2,\] 
given by 
\[  \rho(z) = 0 \oplus 0 \oplus \pmat{0 & 1 \\ -1 & 0}, \quad 
\rho(x) := x \oplus x \oplus 0 \quad \mbox{ for }\quad  x \in \fsl_2(\R).\]
We then have $3$ invariant alternating forms on $V$, represented by 
\[ \beta_+(\bv,\bw) 
= v_1 w_2 - v_2 w_1, \quad 
\beta_-(\bv,\bw) 
= v_3 w_4 - v_4 w_3, \] 
and 
\[ \beta_0(\bv,\bw) 
= v_1 w_4 - v_2 w_3 
+ v_3 w_2 - v_4 w_1 + v_5 w_6 - v_6 w_5.\] 
For $x := \pmat{0 & 1 \\ -1 & 0}$, the corresponding Hamiltonians are 
\[ \beta_+(x.\bv, \bv) 
= v_1^2 + v_2^2, \quad 
\beta_-(x.\bv, \bv) 
= v_3^2 + v_4^2,  
\quad \mbox{ and } \quad 
 \beta_0(x.\bv, \bv) 
= v_1 v_4 + v_2 v_3 + v_4 v_2 + v_3 v_1.\]
Further, $\beta_0(z.\bv,\bv) = v_5^2 + v_6^2.$ 
Now 
$\beta = (\beta_+,\beta_-,\beta_0)$ defines an invariant alternating 
form with values in $\fz := \R^3$. 
For $f(x) = 2 x_1 + 2x_2 + x_3$ and $\omega := f \circ \beta$, the Hamiltonian 
function $H_x$ is positive definite. 

For the Euler element $h := \frac{1}{2}\diag(1,-1) \in \fsl_2(\R)$, 
we have 
\[ V_h = V_1 + V_2 \quad \mbox{ and }  \quad V_{h,0} = V_3.\] 
We define a derivation $D_\fz + D_V$ on $\heis(V,\beta)$ by 
$D_V = \diag(1/2,-1/2,0) \otimes \id_{\R^2}$ and $D_\fz = \diag(1,-1,0).$
Then 
\[ \beta(V_h,V_h) \not\subeq \fz_1(D_\fz) + \fz_{-1}(D_\fz) \] 
because $\beta_0$ does not vanish on $V_h \times V_h$. 
\end{ex}

If $D$ is an Euler derivation of 
$\g = \g(\fl,V,\fz,\beta)$ with 
$\g_0(D) = [\g_1(D), \g_{-1}(D)],$ then it induces 
on $\fl \cong \fg/\fu$ an Euler derivation with 
$\fl_0(D) = [\fl_1(D), \fl_{-1}(D)]$, and this implies 
in particular that $\fl$ is semisimple and a direct sum 
of simple hermitian ideals of tube type (\cite[Prop.~3.11(b)]{MN21}).  
This observation justifies the assumption in our final proposition.

\begin{prop}
  Let \(\g = \g(\fs,V,\fz,\beta)\) be an admissible Lie algebra, where \(\fs\) is a direct sum of hermitian simple Lie algebras of tube type 
and $\fz = [V,V]$. 
Let \(x \in \g\) be a nilpotent element with $C_x$ pointed. Then there exists an Euler derivation \(D \in \der(\g)\) such that 
\begin{equation}
  \label{eq:fineq}
 Dx = x\quad \mbox{  and  } \quad \g_0(D) = [\g_1(D), \g_{-1}(D)].
\end{equation}
\end{prop}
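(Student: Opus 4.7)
The plan is to reduce the problem to the situation of Remark~\ref{rem:euler3}(b), where an explicit Euler derivation with the desired fineness property is constructed once one has an Euler element \(h\in\fs\) satisfying \([h,x_\fs]=x_\fs\), \(\fs_h=\fs\), and \(V_{h,0}=\{0\}\). First, the Reduction Theorem~\ref{thm:conj1} supplies a \(\phi\in\Inn(\g)\) conjugating \(x\) into \(\fz+\fs\); since the statement to be proved is invariant under replacing \((D,x)\) by \((\phi^{-1}D\phi,\phi^{-1}x)\), I may assume from the outset that \(x=x_\fz+x_\fs\) with \(x_\fs\in\fs\) nilpotent. Pointedness of \(C_x\) then forces pointedness of \(\co(x_\fs)\), and since \(x_\fs\) is a nilpotent element of the reductive algebra \(\fs\) this is equivalent to pointedness of \(C_{x_\fs}^{\fs}\).

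Next I construct the Euler element \(h\in\fs\). Let \(\fs'\trile\fs\) be the ideal generated by \(x_\fs\); it is a sum of simple ideals of \(\fs\), each hermitian of tube type by hypothesis, so Remark~\ref{rem:Euler-suppl} supplies an Euler element \(h_0\in\fs'\) with \([h_0,x_\fs]=x_\fs\). Since every simple ideal of \(\fs\) is hermitian of tube type, each carries an Euler element (\cite[Prop.~3.11(b)]{MN21}); I add to \(h_0\) such an Euler element from every simple ideal of \(\fs\) not contained in \(\fs'\), obtaining an Euler element \(h\in\fs\) that still satisfies \([h,x_\fs]=x_\fs\) and whose projection on every simple summand of \(\fs\) is a non-zero Euler element. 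Consequently the ideal \(\fs_h\trile\fs\) generated by \(\fs_{\pm 1}(h)\) meets every simple summand non-trivially, so \(\fs_h=\fs\). Admissibility of \(\g\) gives \(\fz_V(\ft_\fs)=\{0\}\) by Theorem~\ref{thm:spind}(d), whence \(V_{\rm fix,\fs}=\{0\}\) and \(V=V_{\rm eff,\fs}\); applying Lemma~\ref{lem:conv-module-euler-elmnt}(b) to \(\fs_h=\fs\) now shows that \(2\ad h\) acts on \(V\) as an antisymplectic involution, so \(\ad h\) has eigenvalues only \(\pm\shalf\) on \(V\) and \(V_{h,0}=\{0\}\).

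With \(\fl=\fs=\fs_h\) and \(V_{h,0}=\{0\}\) in hand, Remark~\ref{rem:euler3}(b) applies directly: its consistency condition \(\fz_1(D_\fz)=[V_h^+,V_h^+]=[V,V]\) matches our standing assumption \(\fz=[V,V]\), and it yields the Euler derivation \(D:=D_{\rm can}+\ad h\) together with the fineness identity \(\g_0(D)=[\g_1(D),\g_{-1}(D)]\). The remaining identity \(Dx=x\) is immediate: \(D_{\rm can}|_\fz=\id_\fz\) and \([h,\fz]=\{0\}\) give \(Dx_\fz=x_\fz\), while \(D_{\rm can}|_\fs=0\) together with \([h,x_\fs]=x_\fs\) gives \(Dx_\fs=x_\fs\). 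The step I expect to require most care is the enlargement of \(h_0\in\fs'\) to an Euler element \(h\in\fs\) with \(\fs_h=\fs\): the tube-type hypothesis on \emph{every} simple ideal of \(\fs\) (not merely those containing \(x_\fs\)) is essential here, for otherwise a simple summand of \(\fs\) might carry no Euler element and the fineness condition \(\g_0(D)=[\g_1(D),\g_{-1}(D)]\), via Remark~\ref{rem:euler3}(a), would be unreachable.
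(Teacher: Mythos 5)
Your proof is correct and follows the same route as the paper's: reduce to $x = x_\fz + x_\fs \in \fz + \fs$, produce an Euler element $h \in \fs$ with $[h,x_\fs] = x_\fs$ and $\fs_h = \fs$, and then invoke Remark~\ref{rem:euler3}(b) to conclude that $D = D_{\rm can} + \ad h$ works. The one place where you add genuine value is in the construction of $h$: the paper cites \cite[Lemma~IV.7]{HNO94} as directly producing an Euler element $h$ with $\fs_{\pm1}(h)$ generating all of $\fs$, but (as Remark~\ref{rem:Euler-suppl} itself makes clear) that lemma only yields $h_0$ in the ideal $\fs' \trile \fs$ generated by $x_\fs$; if $x_\fs$ has zero component in some simple summand of $\fs$, then $\fs' \subsetneq \fs$ and $h_0$ alone will not make $\fs_h = \fs$. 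Your step of adding to $h_0$ an Euler element from each remaining simple ideal of $\fs$ (using that every summand is hermitian of tube type) is exactly the repair needed, and it leaves $[h,x_\fs] = x_\fs$ unaffected since $x_\fs \in \fs'$. You also correctly unpack why $\fs_h = \fs$ forces $V_{h,0} = \{0\}$ via Theorem~\ref{thm:spind}(d) and Lemma~\ref{lem:conv-module-euler-elmnt}(b), which the paper treats as implicit in the phrase ``and, as a consequence, $V_{h,0}=\{0\}$'' of Remark~\ref{rem:euler3}(b). In short: same proof, with a small gap in the paper's citation properly closed.
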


\begin{proof} 
%  We may assume that \(\fs\) acts effectively on \(V\) because otherwise we can% write \(\g\) as a direct sum \(\g(\fs_1,V,\fz,\beta) \oplus \fs_0\) 
%of admissible Lie algebras, where \(\fs_1\) acts effectively on $V$ 
%and $[V,\fs_0] = \{0\}$. 
%  The subalgebra \(\fs\) is a Levi complement in \(\g\).
%  Since Levi complements in \(\g\) are conjugate under inner automorphisms, we %may 
%assume by Corollary~\ref{cor:1.7} that \(x = (x_\fz,0,x_\fs) \in \fz \times \{0%\} \times \fs\).
By Corollary~\ref{cor:1.7}, we may assume that 
$x = x_\fz + x_\fs \in \fz + \fs$. 
Then \(x_\fs\) is nilpotent with $C_{x_\fs}$ pointed 
(cf.\ Corollary~\ref{cor:2.6}). 
  Hence, there exists an Euler element \(h \in \fs\) such that \([h,x_\fs] = x_\fs\) and such that \(\fs\) is generated by \(\fs_{\pm 1}(h)\) 
(\cite[Lemma~IV.7]{HNO94}). 
Now the discussion in Remark~\ref{rem:euler3}(b) shows that 
$D := D_{\rm can} + \ad h$ is an Euler derivation 
satisfying~\eqref{eq:fineq}. 
\end{proof}

\appendix

\section{Tools concerning convexity} 
\mlabel{app:a}

\begin{lem} \mlabel{lem:limcone} {\rm(\cite[Lemma~2.9]{Ne10})} 
If $\eset\not=C \subeq E$ 
is an open or closed convex subset, then the following assertions hold: 
\begin{description}
  \item[\rm(i)] $\lim(C) := \{ x \in E \: x + C \subeq C\}$ 
is a closed convex cone that coincides with 
$\lim(\oline C)$. 
  \item[\rm(ii)] $v \in \lim(C)$ if and only if 
there exist net $t_j c_j \to v$, where 
$t_j \geq 0$, $t_j \to 0$ and $c_j \in C$. 
  \item[\rm(iii)] If $c \in C$ and $d \in E$ satisfy 
$c + \R_+ d \subeq C$, then $d \in \lim(C)$. 
\item[\rm(iv)] $H(C) := \lim(C) \cap -\lim(C)$ is trivial 
if and only if $C$ contains no affine lines. 
%\item[\rm(v)] $B(C)^\star = \lim(C)$ and $B(C)^\bot = H(C)$. 
\end{description}
\end{lem}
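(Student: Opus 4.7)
The plan is to establish (i) first in full, then deduce (ii), and obtain (iii) and (iv) as short corollaries. The main subtlety is handling the open and closed cases uniformly; I would resolve it inside (i) by proving $\lim(C)=\lim(\oline C)$, so that all subsequent arguments can be run in the (easier) closed setting.

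For (i), convexity of $\lim(C)$ is immediate from convexity of $C$. To verify $\R_+\cdot\lim(C)\subseteq\lim(C)$, I would first check by induction that $nx+c\in C$ for any $x\in\lim(C)$, $c\in C$, and $n\in\N$, and then handle an arbitrary $\lambda\in[0,n]$ by writing
\[ \lambda x+c=\tfrac{\lambda}{n}(nx+c)+\bigl(1-\tfrac{\lambda}{n}\bigr)c, \]
a convex combination of two elements of $C$. For closedness, suppose $x_n\to x$ with $x_n+C\subseteq C$ and fix $c\in C$. If $C$ is closed, then $x+c=\lim(x_n+c)\in C$; if $C$ is open, then $x_n+c\in C$ together with $x-x_n\to 0$ gives $x+c=(x_n+c)+(x-x_n)\in C$ for all sufficiently large $n$. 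For the identity $\lim(C)=\lim(\oline C)$, the inclusion $\subseteq$ follows from $x+\oline C\subseteq\oline{x+C}\subseteq\oline C$, while the reverse inclusion, in the open case, uses that $x+C$ is then open and contained in $\oline C$, hence in $\Int(\oline C)=C$.

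For (ii), in the forward direction I would take $c_j:=jv+c$ (which lies in $C$ by the cone property established above) and $t_j:=1/j$, so that $t_jc_j=v+c/j\to v$. Conversely, given $t_jc_j\to v$ with $t_j\downarrow 0$ and $c_j\in C$, I fix any $c\in C$ and note that, for large $j$, the convex combination $(1-t_j)c+t_jc_j$ lies in $C$ and converges to $c+v$, placing $c+v\in\oline C$. For closed $C$ this is already the desired conclusion; the open case reduces to the closed one via $\lim(C)=\lim(\oline C)$ from (i).

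Statement (iii) is then immediate from (ii): choose $t_j:=1/j$ and $c_j:=c+jd\in C$, so that $t_jc_j=c/j+d\to d$. For (iv), if $d\in H(C)\setminus\{0\}$ and $c\in C$, then $c\pm nd\in C$ for every $n\in\N$ by the cone property, hence $c+[-n,n]d\subseteq C$ by convexity, and letting $n\to\infty$ yields the affine line $c+\R d\subseteq C$. Conversely, if $C$ contains an affine line $c+\R d$ with $d\neq 0$, then (iii) applied to both $d$ and $-d$ shows $\pm d\in\lim(C)$, so $d\in H(C)$. The only genuine obstacle in the whole argument is the open/closed dichotomy in the closedness step of (i) and in (ii); once it is dispatched via $\lim(C)=\lim(\oline C)$, the remaining steps are routine.
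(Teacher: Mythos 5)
The paper offers no proof of this lemma at all: it is quoted verbatim from \cite[Lemma~2.9]{Ne10} and placed in the appendix as a tool, so there is nothing in the paper to compare your argument against. Your proof is the standard one and is essentially correct: the convexity and cone properties of $\lim(C)$, the identity $\lim(C)=\lim(\oline C)$ (where you correctly invoke the standard fact that a nonempty open convex set satisfies $C=\Int(\oline C)$), and the deductions of (ii), (iii), (iv) all go through, including the slightly delicate point in the converse of (ii) that the convex combinations $(1-t_j)c+t_jc_j$ only land you in $\oline C$, which you then correctly route back through $\lim(C)=\lim(\oline C)$.

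There is one step that does not work as written: in the closedness argument for open $C$ you conclude $x+c=(x_n+c)+(x-x_n)\in C$ from $x_n+c\in C$, $C$ open, and $x-x_n\to 0$. This uses openness of $C$ at the moving point $x_n+c$, and the radius of a ball around $x_n+c$ contained in $C$ may shrink to $0$ as $n\to\infty$ (these points can approach $\partial C$), while the perturbation $x-x_n$ is tied to the same index $n$; so no single $n$ is guaranteed to work. The repair is immediate: perturb the fixed point $c$ instead, writing $x+c=x_n+\bigl(c+(x-x_n)\bigr)$ and noting $c+(x-x_n)\in C$ for large $n$ by openness at $c$, whence $x+c\in x_n+C\subseteq C$. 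Alternatively, since your proof of $\lim(C)=\lim(\oline C)$ nowhere uses closedness of $\lim(C)$, you can prove that identity first and then obtain closedness in the open case from the closed case --- which is precisely the reduction you announced in your opening paragraph but did not actually use at this step.
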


\begin{lem}
  \mlabel{lem:conecrit} 
Let $V$ be a finite dimensional real vector space and 
$C \subeq V$ be a closed convex subset. 
Then the cone $\cone(C):= \oline{\R_+ C}$ is pointed if  
$\lim(C)$ is pointed and $0 \not\in C$. 
\end{lem}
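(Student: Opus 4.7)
The plan is to show directly that if $v, -v \in \cone(C) = \overline{\R_+C}$, then $v = 0$, so that $H(\cone(C)) = \{0\}$. The key preparatory observation is the characterization
\[ \overline{\R_+ C} \;=\; \R_+ C \,\cup\, \lim(C), \]
valid under the hypotheses. Indeed, given $v$ in the closure, choose $t_n \geq 0$ and $c_n \in C$ with $t_n c_n \to v$, and pass to a subsequence along which $t_n$ converges in $[0,\infty]$. If $t_n \to t \in (0,\infty)$, then $c_n \to v/t$ lies in $C$ by closedness, so $v \in \R_+ C$. If $t_n \to 0$, then Lemma~\ref{lem:limcone}(ii) places $v$ in $\lim(C)$. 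If $t_n \to \infty$, then $c_n = (t_n c_n)/t_n \to 0$, forcing $0 \in C$ by closedness, which contradicts the standing hypothesis $0 \notin C$.

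With this dichotomy in hand, I would split the analysis of $v \in H(\cone(C))$ into three cases. In case (i), both $v$ and $-v$ lie in $\lim(C)$; the assumed pointedness of $\lim(C)$ gives $v = 0$ at once. In case (ii) (the main one), one of $v, -v$ lies in $\R_+ C$ and the other in $\lim(C)$. Say $v = t c$ with $t > 0$, $c \in C$, and $-v \in \lim(C)$. By definition of the recession cone, $c + s(-v) \in C$ for every $s \geq 0$, i.e.\ $(1/t - s)v \in C$ for all $s \geq 0$. For $s > 1/t$ this produces $-rv \in C$ for every $r > 0$, and passing to the limit $r \to 0^+$ via closedness of $C$ yields $0 \in C$, a contradiction unless $v = 0$. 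In case (iii), both $v = t_1 c_1$ and $-v = t_2 c_2$ lie in $\R_+ C$ with $t_1, t_2 > 0$; the convex combination
\[ \frac{t_2}{t_1+t_2}\, c_1 + \frac{t_1}{t_1+t_2}\, c_2 \;=\; \frac{1}{t_1+t_2}\bigl( (t_2/t_1)\, v + (-t_1/t_2)\, v \cdot \tfrac{t_2}{t_1}\bigr) \]
simplifies to $0$ and lies in $C$ by convexity, again contradicting $0 \notin C$.

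In every case we conclude $v = 0$, establishing $H(\cone(C)) = \{0\}$, i.e.\ pointedness of $\cone(C)$. I do not expect any serious obstacle; the only step requiring a little care is case (ii), where one must combine the recession-cone inclusion $c + \R_+ (-v) \subseteq C$ with the closedness of $C$ to extract the contradiction $0 \in C$. The trichotomy in cases (i)--(iii) is exhaustive precisely because $0 \notin C$ rules out the runaway subsequence $t_n \to \infty$ in the characterization of $\overline{\R_+C}$.
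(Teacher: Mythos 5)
Your proof is essentially correct and hinges on the same key observation as the paper: the characterization $\cone(C)\subseteq \R_+ C\cup \lim(C)$, with $0\notin C$ ruling out unbounded scaling sequences. (The paper bounds the scalars via a Hahn--Banach separating functional; you pass to a convergent subsequence in $[0,\infty]$ and kill $t_n\to\infty$ directly. These are equivalent.) Where you genuinely diverge is in the concluding step: the paper packages the three cases into a single slick observation, namely that $S=\R_+ C\cup\lim(C)$ is an additive semigroup in which $\R_+ C$ is a semigroup ideal, so $S\cap -S\subseteq \lim(C)\cap-\lim(C)=\{0\}$. Your explicit three-case analysis of $v,-v\in H(\cone(C))$ establishes exactly the same thing and is arguably more transparent; the semigroup phrasing is shorter and avoids naming cases. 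Both are fine.

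There is, however, a computational slip in your case (iii) that you should fix. With $c_1=v/t_1$ and $c_2=-v/t_2$, the convex combination you wrote, $\frac{t_2}{t_1+t_2}c_1+\frac{t_1}{t_1+t_2}c_2$, equals $\frac{(t_2-t_1)}{t_1 t_2}\,v$, which is $0$ only when $t_1=t_2$; the displayed intermediate rewriting is also garbled. The correct choice swaps the weights:
\[
\frac{t_1}{t_1+t_2}\,c_1+\frac{t_2}{t_1+t_2}\,c_2 \;=\; \frac{1}{t_1+t_2}\,v-\frac{1}{t_1+t_2}\,v \;=\; 0,
\]
which lies in $C$ by convexity and contradicts $0\notin C$. (A small aside on case (ii): you can skip the limiting argument and simply put $s=1/t$ in $c-sv\in C$ to land on $0\in C$ directly.) With that coefficient corrected, the proof is complete.
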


\begin{prf} First we show that 
  \begin{equation}
    \label{eq:inc1}
 \cone(C) \subeq S := \R_+ C \cup \lim(C).
  \end{equation}
Let $v \in \cone(C)$ and choose $\lambda_n \geq 0$ and 
$c_n \in C$ with $\lambda_n c_n \to v$. 

As $0 \not\in C$, the Hahn--Banach Separation Theorem implies the existence 
of an $f \in V^*$ such that 
$0 < \inf f(C) =: \delta$. Then 
$\lambda_n \delta \leq \lambda_n f(c_n) \to f(v)$ 
shows that the sequence $\lambda_n$ is bounded. 
We may therefore assume that $\lambda_n \to \lambda \in [0,\infty)$. 
If $\lambda > 0$, then the sequence $c_n$ converges to $\lambda^{-1} v$ 
and $v \in \R_+ C$. If $\lambda = 0$, then $v \in \lim(C)$ 
(Lemma~\ref{lem:limcone}), and \eqref{eq:inc1} follows.

Next we observe that $C + \lim(C)\subeq C$ implies 
$\R_+ C + \lim(C) \subeq \R_+ C$. Therefore 
$S$ is an additive subsemigroup of~$V$ 
in which $\R_+ C$ is a semigroup ideal, i.e., 
$S + \R_+ C \subeq \R_+C$. Since $0\not\in \R_+C$, it follows that 
\[ S \cap - S \subeq \lim(C) \cap - \lim(C) = \{0\}.\] 
This implies that $\cone(C) \subeq S$ is pointed. 
\end{prf}

\begin{lem} \mlabel{lem:pointconegen} 
Let $C \subeq V$ be a pointed closed convex cone and $x \in V$. 
Then the following are equivalent: 
\begin{itemize}
\item[\rm(a)] $\cone(x + C) = \oline{\R_+ x + C}$ is pointed. 
\item[\rm(b)] $(x + C) \cap -C \subeq \{0\}$.
\item[\rm(c)] $0 \not\in x + C$ or $x = 0$. 
\end{itemize}
\end{lem}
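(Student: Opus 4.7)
The plan is to prove the equivalences cyclically, (a)$\Rightarrow$(b)$\Rightarrow$(c)$\Rightarrow$(a), after recording two preliminary observations. First, since $C$ is a closed convex cone, $\lim(x+C) = \lim(C) = C$. Second, the cone $\cone(x+C) = \oline{\R_+(x+C)}$ contains $C$: for any $c \in C$, taking $t_n \to 0^+$ and writing $t_n(x + t_n^{-1}c) = t_n x + c \to c$ exhibits $c$ as a limit point, using that $t_n^{-1}c \in C$ because $C$ is a cone. Obviously $x+C \subseteq \cone(x+C)$ as well.

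For (a)$\Rightarrow$(b) I would argue by contradiction: if some nonzero $y$ lies in $(x+C) \cap (-C)$, then $y \in x+C \subseteq \cone(x+C)$ while $-y \in C \subseteq \cone(x+C)$ by the second observation above, so $\cone(x+C)$ contains the affine line $\R y$ and is not pointed. For (b)$\Rightarrow$(c) I would use the contrapositive: assume $x \neq 0$ and $0 \in x+C$, i.e., $-x \in C$. Then $x \in -C$, and $x = x + 0 \in x+C$, so $x \in (x+C) \cap (-C) \setminus \{0\}$, contradicting (b).

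For the remaining implication (c)$\Rightarrow$(a) I would split into two cases. If $x = 0$, then $\cone(x+C) = C$ is pointed by assumption. If $x \neq 0$ and $0 \notin x+C$, apply Lemma~\ref{lem:conecrit} to the closed convex set $x+C$: its recession cone equals $C$, which is pointed, and $0 \notin x+C$, so $\cone(x+C)$ is pointed.

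No real obstacle is anticipated here; the only subtlety is verifying that $C \subseteq \cone(x+C)$, which makes the (a)$\Rightarrow$(b) direction work, and correctly invoking Lemma~\ref{lem:conecrit} for the (c)$\Rightarrow$(a) direction with the identification $\lim(x+C) = C$.
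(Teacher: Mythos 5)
Your proof is correct and takes essentially the same route as the paper's: the key ingredients in both are the inclusion $C \subseteq \cone(x+C)$ (which you justify more explicitly than the paper does), the observation that $x \in (x+C)\cap(-C)$ whenever $0 \in x+C$, and the application of Lemma~\ref{lem:conecrit} to the translated set $x+C$ for the implication (c)$\Rightarrow$(a). The only differences are presentational (you phrase (a)$\Rightarrow$(b) and (b)$\Rightarrow$(c) by contradiction/contrapositive where the paper argues directly).
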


\begin{prf} (a) $\Rarrow$ (b): Suppose the $D := \cone(x + C)$ is pointed. 
Then $C \subeq D$ and 
\[ (x + C) \cap - C \subeq  D \cap - D = \{0\}.\] 

\nin (b) $\Rarrow$ (c): That the intersection of $x+ C$ and $-C$ is contained 
in $\{0\}$ happens in two cases. 
If the intersection is empty, then $0 \in - C$ shows that $0$ is not contained in 
$x + C$. If the intersection is non-empty, then it is $\{0\}$ and thus 
$x \in - C$, which in turn implies $x = 0$. 

%\nin (c) $\Rarrow$ (b): Suppose that (c) holds. 
%The set $D := (x + C) \cap -C$ is the order interval between $x$ and $0$ in 
%the cone order $\leq_C$. If $0 \not\in x + C$, this set is empty, 
%and if $x = 0$, then it is $\{0\}$. 

\nin (c) $\Rarrow$ (a): If $0 \not\in x  + C$, then Lemma~\ref{lem:conecrit} 
implies that $\cone(x + C)$ is pointed. If $x = 0$, then 
$\cone(x + C) = \cone(C) = C$ is trivially pointed. 
\end{prf}

\section{Tools concerning Lie algebras} 
\mlabel{app:b}

\begin{prop} \mlabel{prop:jac-mor}  
Let $x$ be an element of the semisimple real Lie algebra 
$\g$ and $x = x_s + x_n$ its Jordan decomposition, where $x_s$ is semisimple 
and $x_n\not=0$ is nilpotent. Then there exists a reductive subalgebra 
$\fm \subeq \g$ such that 
\[ \fm \cong \fsl_2(\R) \quad \mbox{ if } \quad x_s = 0\] 
and 
\[ \fm \cong \gl_2(\R) \quad \mbox{ with } \quad 
x_s \in \fz(\fm), x_n \in [\fm,\fm] 
\quad \mbox{ if } \quad x_s \not= 0.\] 
\end{prop}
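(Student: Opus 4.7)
The plan is to handle the two cases separately. If $x_s = 0$, then $x = x_n$ is itself a non-zero nilpotent element of the semisimple Lie algebra $\g$, and the classical Jacobson--Morozov theorem furnishes an $\fsl_2$-triple $(h, x_n, y)$; then $\fm := \spann\{h, x_n, y\} \cong \fsl_2(\R)$ is as required.

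For $x_s \neq 0$, the idea is to apply Jacobson--Morozov inside the centralizer $\g^{x_s} := \{y \in \g : [y,x_s] = 0\}$, which is reductive because $x_s$ is semisimple in the semisimple Lie algebra $\g$. Since $x_s$ and $x_n$ commute (a standard feature of the Jordan decomposition), $x_n$ lies in $\g^{x_s}$, and $\ad_{\g^{x_s}}(x_n)$ is the restriction of the nilpotent operator $\ad_\g(x_n)$, hence nilpotent. To apply Jacobson--Morozov cleanly, I need to know that $x_n$ actually lies in the semisimple part $\fs' := [\g^{x_s}, \g^{x_s}]$ of $\g^{x_s}$, not merely in $\g^{x_s}$.

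The key preparatory step will be to show $\fz(\g^{x_s}) \subeq \ft$ for any Cartan subalgebra $\ft$ of $\g$ containing $x_s$: such a $\ft$ exists, lies in $\g^{x_s}$ since $\ft$ is abelian, and is a Cartan subalgebra of $\g^{x_s}$ by maximality of toral subalgebras, so it must contain the center. This implies that every element of $\fz(\g^{x_s})$ is ad-semisimple in $\g$. Writing $x_n = c + s$ with $c \in \fz(\g^{x_s})$ and $s \in \fs'$, one has $[c,s] = 0$, so $\ad_\g x_n = \ad_\g c + \ad_\g s$ is a commuting decomposition with $\ad_\g c$ semisimple; since $\ad_\g x_n$ is nilpotent, uniqueness of the Jordan decomposition in $\gl(\g)$ forces $\ad_\g c = 0$, and faithfulness of $\ad_\g$ on the semisimple $\g$ then yields $c = 0$, so $x_n \in \fs'$.

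Having placed $x_n$ in $\fs'$, I invoke Jacobson--Morozov there to obtain an $\fsl_2$-triple $(h,x_n,y)$ lying in $\g^{x_s}$, set $\fs_n := \spann\{h,x_n,y\}$, and take $\fm := \R x_s + \fs_n$. Since $h$ and $y$ commute with $x_s$, it remains only to verify that $\R x_s \cap \fs_n = \{0\}$ in order to conclude $\fm \cong \R \oplus \fsl_2(\R) \cong \gl_2(\R)$ with $x_s \in \fz(\fm)$ and $x_n \in [\fm,\fm]$. If $x_s$ were a non-zero element of $\fs_n \cong \fsl_2(\R)$, then, being semisimple and commuting with the nilpotent $x_n$, it would belong to the centralizer of $x_n$ in $\fsl_2(\R)$; but that centralizer is the line $\R x_n$, consisting only of nilpotent elements, a contradiction. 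The main obstacle is the structural point that $x_n$ has no central component in $\g^{x_s}$ — beyond that the argument is a routine packaging of Jacobson--Morozov.
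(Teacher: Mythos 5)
Your route is genuinely different from the paper's: rather than passing to the Jacobson--Morozov parabolic of $x_n$ and conjugating $x_s$ into its Levi factor (as the paper does), you work directly inside the centralizer $\g^{x_s}$, which is a reasonable alternative and avoids the conjugation step. However, the step placing $x_n$ in $\fs' := [\g^{x_s},\g^{x_s}]$ has a gap.

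You write $x_n = c + s$ with $c \in \fz(\g^{x_s})$, $s \in \fs'$, note that $\ad_\g c$ is semisimple and commutes with $\ad_\g s$, and conclude from nilpotency of $\ad_\g x_n$ and ``uniqueness of the Jordan decomposition'' that $\ad_\g c = 0$. That inference is not valid as stated: a commuting sum of a semisimple operator and an arbitrary one can be nilpotent without the semisimple summand vanishing (e.g.\ $A = \diag(1,-1)$, $B = \diag(-1,1)$, $A+B = 0$). To invoke uniqueness you must first know that $\ad_\g s$ is itself nilpotent, which you do not establish. The gap is fillable: since $c$ centralizes $\g^{x_s}$, the nilpotent operator $\ad_{\g^{x_s}} x_n$ equals $\ad_{\g^{x_s}} s$, whose restriction to the ideal $\fs'$ is $\ad_{\fs'} s$; hence $s$ is a nilpotent element of the semisimple Lie algebra $\fs'$, and a nilpotent element of a semisimple Lie algebra acts nilpotently in every finite-dimensional representation (embed $s$ in an $\fsl_2$-triple inside $\fs'$ and use $\fsl_2$-theory), so $\ad_\g s$ is nilpotent and your uniqueness argument then applies. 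Alternatively, invoke the standard fact that for a reductive Lie subalgebra $\fr \subseteq \gl(V)$ with $V$ a semisimple $\fr$-module, the Jordan components of any element of $\fr$ lie in $\fr$ with the nilpotent part in $[\fr,\fr]$; applied to $\ad_\g(\g^{x_s}) \subseteq \gl(\g)$ (semisimplicity of the module $\g$ following from $\fz(\g^{x_s})\subseteq\ft$), this gives $x_n \in \fs'$ directly. Once this step is secured, the remainder of your argument is correct.
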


\begin{prf} Since the Jordan decomposition and the adjoint orbit 
of $x$ adapts to the decomposition of $\g$ into simple ideals, 
we may w.l.o.g.\ assume that $\g$ is simple. 

Let $\fq = \fl \ltimes \fu\subeq \g$ denote the Jacobson--Morozov 
parabolic associated to the nilpotent element $x_n$ (\cite{HNO94}). Then 
$x_s \in \ker(\ad x_n) \subeq \fq$ implies that $x_s \in \fq$. 
As $x_s$ is semisimple, it is conjugate under the group 
of inner automorphisms of 
$\fq$ to an element of $\fl$.\begin{footnote}{Every algebraic subgroup 
$G \subeq \GL(V)$, $V$ a finite dimensional real vector space,
 is a semidirect product 
$G \cong U \rtimes L$, where $U$ is unipotent and $L$ is reductive. 
Moreover, for every reductive  subgroup $L_1 \subeq G$ there exists an 
element $g \in G$ with $gL_1 g^{-1} \subeq L$ 
(\cite[Thm.~VIII.4.3]{Ho81}).}
\end{footnote}
By the Jacobson--Morozov Theorem (\cite[Ch.~VIII, \S 11, Prop.~2]{Bo90}), 
$\fl$ contains a semisimple element $h$ with 
$[h,x_n] = 2 x_n$ and $h \in [x_n,\g]$. In terms of this element, we have 
$\fq = \sum_{n \geq 0} \g_n(h)$ and $\fl = \ker(\ad h)$. 
We further find a nilpotent element $y \in \g_{-2}(h)$ such that 
$[x_n,y] = h$, so that the Lie algebra generated by 
$x$ and $y$ is isomorphic to $\fsl_2(\R)$. 
Replacing $x$ by a suitable conjugate, we have seen above that 
we may assume that $x_s \in \g_0(h)$. We consider the Lie algebra 
\[ \fm := \Spann \{ x_s, h,x_n,y\}.\] 
If $x_s = 0$, then $\fm \cong \fsl_2(\R)$. 
If $x_s \not=0$, then 
\[ 0 = [h,x_s] = [[x_n, y],x_s] = [x_n, [y,x_s]] 
\quad \mbox{ with } \quad [y,x_s] \in \g_{-2}(h),\] 
so that the representation theory of $\fsl_2(\R)$ implies that 
$[y,x_s] = 0$. Therefore $x_s \in \fz(\fm)$. 
\end{prf}

\begin{cor} \mlabel{cor:jac-mor}  
Let $x$ be an element of the semisimple real Lie algebra 
$\g$ and $x = x_s + x_n$ its Jordan decomposition, where $x_s$ is semisimple 
and $x_n\not=0$ is nilpotent. Then the adjoint orbit $\cO_x$ of $x$ contains 
all elements of the form $x_s + t x_n$, $t > 0$. 
In particular, 
\[ x_s \in \co(x) \quad \mbox{ and } \quad x_n \in \lim(\co(x)).\] 
\end{cor}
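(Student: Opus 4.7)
The plan is to reduce everything to the subalgebra $\fm$ produced by Proposition~\ref{prop:jac-mor}. That proposition furnishes an $\fsl_2(\R)$-subalgebra $\fm'\subeq\fm$ (namely $\fm'=[\fm,\fm]$, or $\fm$ itself when $x_s=0$) containing $x_n$, together with an $\fsl_2$-triple $(h,x_n,y)$ inside $\fm'$ satisfying $[h,x_n]=2x_n$. The key point is that in either case of the proposition one has $[h,x_s]=0$: if $x_s=0$ this is trivial, and if $x_s\neq 0$ it follows from $x_s\in \fz(\fm)$ while $h\in[\fm,\fm]$.

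Given this, the first step is the explicit computation
\[
e^{s\,\ad h}x \;=\; e^{s\,\ad h}x_s + e^{s\,\ad h}x_n \;=\; x_s + e^{2s}x_n
\qquad(s\in\R),
\]
which shows $x_s+e^{2s}x_n\in \cO_x$. Letting $s$ range over $\R$, the factor $e^{2s}$ sweeps out all of $(0,\infty)$, so $x_s+tx_n\in\cO_x$ for every $t>0$, which is the first claim.

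For the two remaining assertions I would argue as follows. Letting $s\to -\infty$, the elements $x_s+e^{2s}x_n$ converge to $x_s$, so $x_s\in\oline{\cO_x}\subeq\co(x)$. For the statement $x_n\in\lim(\co(x))$, the half-line $x_s+\R_{+}x_n$ is contained in $\oline{\cO_x}\subeq\co(x)$, so Lemma~\ref{lem:limcone}(iii) (with $c=x_s$ and $d=x_n$) yields $x_n\in\lim(\co(x))$.

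There is essentially no technical obstacle here once Proposition~\ref{prop:jac-mor} is available: the only thing one needs to verify with a little care is the commutation $[h,x_s]=0$, which is handled uniformly by the two cases of that proposition. Everything else is a one-parameter group computation and an application of the recession-cone characterization.
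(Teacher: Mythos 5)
Your proof is correct and follows essentially the same route as the paper: you invoke Proposition~\ref{prop:jac-mor} to produce an $h$ with $[h,x_s]=0$ and $[h,x_n]=2x_n$, and then observe $e^{s\,\ad h}x = x_s + e^{2s}x_n$. You merely spell out in more detail the verification that $[h,x_s]=0$ in both cases of that proposition and the derivation of the two ``in particular'' claims (via $s\to-\infty$ and Lemma~\ref{lem:limcone}(iii)), which the paper leaves implicit.
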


\begin{prf} With Proposition~\ref{prop:jac-mor} we find an 
element $h \in \g$ with $[h,x_s] = 0$ and $[h,x_n] = 2x_n$. 
Then the assertion follows from 
$e^{t\ad h} x = x_s + e^{2t} x_n$ for $t \in \R$.  
\end{prf}

\end{document}